\documentclass[11pt]{article}

\usepackage[a4paper,margin=1in]{geometry}
\usepackage[british]{babel}
\usepackage{amsmath,amssymb,amsthm,mathtools,amscd}
\usepackage{mathrsfs}
\usepackage{bm}
\usepackage{booktabs,tabularx,array}
\usepackage{enumitem}
\usepackage{microtype}
\usepackage{xcolor}
\usepackage[colorlinks=true,allcolors=black]{hyperref}
\usepackage[nameinlink,noabbrev]{cleveref}
\usepackage{orcidlink}

\newtheorem{theorem}{Theorem}[section]
\newtheorem{proposition}[theorem]{Proposition}
\newtheorem{lemma}[theorem]{Lemma}
\newtheorem{corollary}[theorem]{Corollary}
\theoremstyle{definition}
\newtheorem{definition}[theorem]{Definition}
\newtheorem{assumption}[theorem]{Assumption}
\theoremstyle{remark}
\newtheorem{remark}[theorem]{Remark}

\numberwithin{equation}{section}

\newcommand{\eps}{\varepsilon}
\newcommand{\del}{\delta}
\newcommand{\R}{\mathbb R}
\newcommand{\N}{\mathbb N}

\newcommand{\Hcal}{\mathcal H}
\newcommand{\Xcal}{\mathcal X}
\newcommand{\Mcal}{\mathcal M}
\newcommand{\Acal}{\mathcal A}
\newcommand{\Bcal}{\mathcal B}
\newcommand{\Dcal}{\mathcal D}
\newcommand{\Ecal}{\mathcal E}

\newcommand{\Ucal}{\mathcal U}
\newcommand{\Rcal}{\mathcal R}

\newcommand{\norm}[1]{\left\lVert #1\right\rVert}
\newcommand{\abs}[1]{\left|#1\right|}
\newcommand{\set}[1]{\left\{#1\right\}}

\newcommand{\dist}{\operatorname{dist}}
\newcommand{\Lip}{\operatorname{Lip}}
\newcommand{\Id}{\operatorname{Id}}

\newcommand{\rank}{\operatorname{rank}}
\newcommand{\dom}{\operatorname{D}}
\newcommand{\e}{\mathrm e}

\newcommand\blfootnote[1]{%
	\begingroup
	\renewcommand\thefootnote{}\footnote{#1}%
	\addtocounter{footnote}{-1}%
	\endgroup
}

\title{Renormalized Invariant Manifolds}
\author{Christian Kuehn \orcidlink{0000-0002-7063-6173}$^{1}$
\& Jan-Eric Sulzbach \orcidlink{ 0000-0002-9446-2366}$^{2}$
}

\date{
	\small{$^1$\textit{Technical University of Munich, School of Computation, Information and Technology, \\ Department of Mathematics, Boltzmannstraße 3, 85748 Garching, Germany} \\
    $^2$\textit{Leiden University, Mathematical Institute, Einsteinweg 55, 2333 CC Leiden, Netherlands}}
}

\begin{document}
\maketitle

\begin{abstract}
    Invariant manifold theory for partial differential equations (PDEs) is technically challenging as we often lack spectral gaps. 
    Motivated by recent progress using renormalization in various areas of mathematics, we introduce a spectral renormalization method for semilinear parabolic equations that creates an artificial spectral gap at high frequencies while leaving the low modes unchanged. 
    For the resulting family of renormalized equations, classical inertial-manifold theory yields finite-dimensional invariant manifolds whose dimension diverges as the renormalization parameter tends to zero. 
    We show that the reduced dynamics on these manifolds approximate the original semiflow through an approximate semi-conjugacy, with an $O(\eps^2)$ error on bounded finite-time intervals.
    Under global incremental dissipativity, the semi-conjugacy is uniform in time and yields an $O(\eps^2)$ Hausdorff estimate for the global attractors, if they exist. 
    We also establish further dynamical properties of the spectral renormalization such as persistence of hyperbolic equilibria and their Morse indices.
    We illustrate the construction in settings with and without a natural spectral gap and embed it into a two-parameter fast-slow PDE framework, where the artificial gap enables us to construct slow manifolds for systems on bounded domains with arbitrary dimension. 
    Thus our new approach provides a finite-dimensional dynamical approximation even in regimes where an exact inertial manifold for the original equation is not available. In particular, this provides a practical balance between fully invariant manifolds and direct estimates. 
\end{abstract}

\vspace{2mm}
	
	\noindent {\small \textbf{Keywords:} inertial manifolds, renormalization, dimension reduction, singular perturbations, reaction-diffusion systems}
	
	\vspace{1mm}
	
	\noindent {\small \textbf{MSC (2020) Classification:} 35B25, 37L25, 35K57}
	\vspace{3mm}
%

\blfootnote{\textcolor{white}{.}\\[-2.5mm]
	\hspace{-5.4mm}  \; ckuehn@ma.tum.de (Christian Kuehn)  \\[0.8mm]
	 \; j.e.sulzbach@math.leidenuniv.nl (Jan-Eric Sulzbach)
	}

\tableofcontents


\section{Introduction}
\label{sec:introduction}

Geometric singular perturbation theory provides a powerful framework for reducing dynamical systems with separated time scales.  
In finite dimensions, Fenichel theory shows that a normally hyperbolic critical manifold persists as a locally invariant slow manifold and that the dynamics of the full system can be approximated by a reduced flow on this manifold \cite{Fenichel1979,Jones1995}.  
In recent years, parts of this theory have been extended to infinite-dimensional evolution equations and, in particular, to fast-slow PDEs; see, for example, \cite{BatesLuZeng1998} and \cite{HummelKuehn2022,KuehnetAL2024,KuehnSulzbach2025} and the references therein.
A distinctive difficulty in the infinite-dimensional setting is that the slow variable itself contains arbitrarily high spatial frequencies.
To construct a  slow manifold one therefore has to separate the genuinely slow modes from an infinite-dimensional tail which can be treated as an additional stable fast component.

For parabolic problems this separation is closely tied to the spectrum of the underlying elliptic operator.  
To illustrate the issue, consider a semilinear equation
\begin{equation}
    \partial_t v+Av=F(v)
    \label{eq:original-equation-intro}
\end{equation}
on a Hilbert space $\Hcal$, where $A$ is a nonnegative self-adjoint operator with compact resolvent.  
If
\[
    0\leq\lambda_1\leq\lambda_2\leq\cdots,
    \qquad
    \lambda_j\to\infty,
\]
are the eigenvalues of $A$, a standard Lyapunov-Perron construction of an inertial manifold requires a sufficiently large separation between the last retained eigenvalue and the first discarded one.
If A is a one-dimensional elliptic operator this is often available naturally.  
For example, the eigenvalues of the Laplacian on an interval satisfy $\lambda_j\sim cj^2$, so that $\lambda_{j+1}-\lambda_j\to\infty$.  
In dimension two or higher, however, the spectral geometry is fundamentally different.  
Weyl's law no longer yields growing consecutive gaps, and the classical large-gap condition cannot in general be verified.  
This is one of the principal obstructions to applying infinite-dimensional slow-manifold constructions to reaction-diffusion systems on higher-dimensional bounded domains.

The same obstruction is classical in the theory of inertial manifolds.  
An inertial manifold is a finite-dimensional invariant Lipschitz manifold which contains the global attractor and exponentially tracks all trajectories.  
When it exists, it gives an exact finite-dimensional realization of the long-time dynamics of a dissipative PDE; see, for example, \cite{FoiasSellTemam1988,ConstantinEtAl1989,Zelik2014,KostiankoZelik2017}.
The standard construction again relies on a sufficiently large spectral gap.  
The absence of such gaps is not merely a technical shortcoming of a particular proof.  
There are abstract semilinear parabolic equations with uniformly bounded consecutive spectral gaps for which no finite-dimensional Lipschitz inertial manifold can exist; see the counterexamples in \cite{Zelik2014}.  
On the other hand, additional structure can sometimes replace the ordinary gap condition.  The most important example is the spatial-averaging method of Mallet-Paret and Sell for
scalar reaction-diffusion equations in higher space dimensions, which has recently been revisited and extended in \cite{MalletParetSell1988,KostiankoEtAl2022}.  
This mechanism, however, exploits a special scalar structure of the derivative of the Nemytskii operator and is not directly robust for genuinely coupled systems.

The purpose of the present work is to investigate a different approach and change the perspective to balance theoretical requirements of fully invariant objects with practical tasks of studying the dynamics of PDEs via approximations.  
Rather than asking the original equation to possess a large spectral gap, we introduce a controlled modification of the high-frequency linear dynamics which creates one; this idea is related to renormalization, e.g., in stochastic PDEs~\cite{hairer2014theory}. 
Fixing $\mu>0$, we choose the spectral cutoff
\begin{equation}
    \Lambda_\eps:=\mu\eps^{-2},
    \qquad
    P_\eps:=\mathbf 1_{[0,\Lambda_\eps]}(A),
    \qquad
    Q_\eps:=I-P_\eps,
\end{equation}
and replace $A$ by
\begin{equation}
    A_\eps
    :=
    A+\eps^{-1}Q_\eps.
\end{equation}
Thus the modes below the cutoff are left completely unchanged, whereas every discarded mode receives an additional damping of size $\eps^{-1}$.  
The renormalized equation is
\begin{equation}
    \partial_t v^\eps+A_\eps v^\eps=F(v^\eps).
    \label{eq:intro-renormalized-equation}
\end{equation}
If $\beta_\eps$ denotes the upper spectral edge of the retained subspace and $\alpha_\eps$ the lower spectral edge of the modified high modes, then
\[
    \alpha_\eps-\beta_\eps
    \geq\eps^{-1}.
\]
Hence the classical inertial-manifold gap condition is automatically satisfied for all sufficiently small $\eps$, independently of the natural consecutive gaps of $A$.

This construction should not be interpreted as asserting that the original
equation possesses an inertial manifold, nor is it simply a Galerkin truncation, which discards the modes in $Q_\eps\Hcal$ and evolves only the projected state $p\in P_\eps\Hcal$.
By contrast, the renormalized problem remains an evolution equation on the
full phase space $\Hcal$.
Hence for each fixed $\eps>0$, the modified equation possesses an exact finite-dimensional
inertial manifold
\[
    \Mcal_\eps
    =
    \{
       p+h_\eps(p):
       p\in P_\eps\Hcal
    \},
\]
but its dimension tends to infinity as $\eps\to0$.  
At the same time, the inserted gap is moved to increasingly high spectral frequencies.

The central question is therefore not whether $\Mcal_\eps$ converges to a fixed invariant manifold of the original PDE, but whether the finite-dimensional dynamics carried by $\Mcal_\eps$ provide a faithful approximation of the dynamically relevant part of \eqref{eq:original-equation-intro} as the spectral resolution increases.

Our main tool for answering this question is an approximate semi-conjugacy. In particular, instead of asking exact topological equivalence or conjugacy between the original and the renormalized PDE (which evidently will not hold in general), we want to control an approximation error to a conjugacy. Controlling this error is then often enough to analyze the dynamics rigorously using the invariant manifold of the renormalized PDE.  Let us denote the original semiflow by $S(t)$ and the flow on $\Mcal_\eps$ by $S_\eps^\Mcal(t)$, we estimate the defect in the commutative diagram
\[
    R_\eps S(t) -   S_\eps^\Mcal(t)R_\eps,
\]
where the graph retraction is given by
\begin{equation*}
    R_\eps v
    :=
    P_\eps v+h_\eps(P_\eps v)
    \in\Mcal_\eps.
\end{equation*}
The many-to-one character of $R_\eps$ is essential: unresolved high-frequency information is discarded rather than encoded into a finite-dimensional conjugacy.  
Nevertheless, its influence on the retained dynamics is small.  More precisely, for every bounded set $B\subset\Hcal$ and every fixed $T>0$, we are going to show that under suitable assumptions
\begin{equation}
    \sup_{v_0\in B}
    \sup_{0\leq t\leq T}
    \norm{
       R_\eps S(t)v_0
       -
       S_\eps^\Mcal(t)R_\eps v_0
    }_{\Hcal}
    \leq
    C_{B,T}\eps^2
    \label{eq:intro-finite-semiconjugacy}
\end{equation}
for some constant $C_{B,T}>0$ and some fixed finite time $T>0$. This estimate compares the reduced trajectory with the retracted original state.
For general initial data the discarded component itself need not be small at $t=0$, but it decays on the parabolic time scale $O(\eps^2)$.  
After an initial layer of length \(O\bigl(\eps^2|\log\eps|\bigr)\), the reduced trajectory therefore approximates the original state directly with error $O(\eps^2)$ on bounded finite time intervals.

The finite-time result is the basic level of comparison and requires no asymptotic stability of the retained dynamics.  
Stronger dynamical conclusions are obtained under additional hypotheses.  
If the reduced dynamics are incrementally contractive on a region entered by both trajectories in a common finite time, the comparison extends uniformly to all forward times. 
This yields a basinwise result when the local contraction and uniform entrance hypotheses are verified, as in the synchronized-sink example below. The global contraction hypothesis used for the attractor estimate is stronger and forces the
global attractors to be single equilibria.  
At a finer level, the artificial perturbation is small in suitable graph norms on regular states.  
This permits continuation of nondegenerate equilibria and, under hyperbolicity, preservation of their Morse indices.  
In Morse--Smale regimes, finite-time closeness may instead be combined with shadowing and structural-stability theory to relate complete orbits; compare the recent infinite-dimensional shadowing results of \cite{ArrietaCarvalhoTakaessu2026}.  
Thus the conclusions form a hierarchy: increasing dynamical information is obtained as
correspondingly stronger structural assumptions are imposed.

The viewpoint of this paper is therefore intermediate between two classical extremes.  
We do not require the original PDE to admit a single exact finite-dimensional invariant manifold, but neither do we merely truncate its high modes.  
Instead, we construct a family of nearby equations for which the unresolved modes are exactly slaved to the retained variables, and then quantify how much dynamical information survives when this artificial modification is removed.
The distinguishing feature is therefore the combination of an exact invariant reduction of the modified PDE with a controlled dynamical relation to the unmodified equation.

\paragraph{Organization of the paper.}
In Section \ref{sec:functional-setting} we introduce the spectral renormalization and establish the basic high-mode estimates.
In Section \ref{sec:inertial-manifold} we apply the classical spectral-gap theorem to construct the inertial manifold of the renormalized equation.
Section \ref{sec:approximate-semiconjugacy} develops the approximate semi-conjugacy, including finite-time estimates, direct post-initial-layer shadowing, and uniform-in-time comparison in dynamically stable regions.
Section \ref{sec:three-benchmarks} compares the method with intrinsic spectral gaps in the one-dimensional setting, spatial averaging, and a no-inertial-manifold example.
Section \ref{sec:dynamical-consequences} further develops the dynamical consequences of the renormalization, including attractor comparison and persistence of hyperbolic equilibria and their Morse indices.  
The coupled reaction-diffusion example in Section \ref{sec:coupled-AC-benchmark} illustrates these results for pattern-forming dynamics on a three-dimensional bounded domain.  
Section \ref{sec:fast-slow} embeds the construction into a two-parameter fast-slow PDE framework and constructs the corresponding slow manifold.
We conclude with a discussion of alternative spectral modifications, the relation with shadowing and structural stability, and connections with approximate and computational inertial manifolds.


\section{Functional Setting and Assumptions}
\label{sec:functional-setting}

In this section we introduce the phase space, the nonlinearity, and the spectral modification, and record the estimates used throughout the paper.

\begin{assumption}[Linear operator]
\label{ass:operator}
Let $\Hcal$ be an infinite-dimensional real Hilbert space and let $A:\dom(A)\subset\Hcal\to\Hcal$ be a nonnegative self-adjoint operator with compact resolvent.
Denote its eigenvalues, repeated according to multiplicity, by
\[
    0\leq\lambda_1\leq\lambda_2\leq\cdots,
    \qquad \lambda_j\to\infty.
\]
Then, $-A$ generates an analytic contraction semigroup $(\e^{-tA})_{t\geq0}$ on $\Hcal$.
\end{assumption}

For $r\geq0$, define the Hilbert scale
\begin{equation*}
    \Hcal^r:=\dom((I+A)^{r/2}),
    \qquad
    \norm{u}_r:=\norm{(I+A)^{r/2}u}_{\Hcal}.
\end{equation*}
For standard second-order elliptic realizations on smooth bounded domains, this scale describes Sobolev regularity together with the boundary compatibility conditions imposed by the operator and its powers.

\begin{assumption}[Nonlinearity]
\label{ass:nonlinearity}
The map $F:\Hcal\to\Hcal$ is globally bounded and globally Lipschitz.  More precisely, there are constants $M,L>0$ such that
\begin{equation}
    \norm{F(u)}_{\Hcal}\leq M,
    \qquad
    \norm{F(u)-F(w)}_{\Hcal}\leq L\norm{u-w}_{\Hcal}
    \label{eq:F-bounded-Lipschitz}
\end{equation}
for all $u,w\in\Hcal$.
\end{assumption}

\begin{remark}
\label{rem:preparation}
Polynomial reaction terms are generally not globally Lipschitz as maps from $L^2\to L^2$.  
In applications one can either find a priori estimates that show that solutions remain bounded, or prove dissipativity and the existence of an absorbing set in a sufficiently regular phase space, and then modify the nonlinearity outside a slightly larger bounded set so that it becomes globally bounded and globally Lipschitz.
All statements below can then be applied to the prepared equation and, once trajectories have entered the region on which the two vector fields agree, to the original dynamics.
\end{remark}

\begin{assumption}[Spectral cutoff and renormalized operator]
\label{ass:spectral_shift}
Fix $\mu>0$.  For $0<\eps\leq1$, set
\begin{equation}
    \Lambda_\eps:=\mu\eps^{-2},
    \label{eq:lambda_eps}
\end{equation}
and define the orthogonal spectral projections
\begin{equation}
    P_\eps:=\mathbf 1_{[0,\Lambda_\eps]}(A),
    \qquad
    Q_\eps:=I-P_\eps.
    \label{eq:spectral-projections}
\end{equation}  
The space $P_\eps\Hcal$ is finite-dimensional, and $Q_\eps\Hcal$ contains the modes with eigenvalues strictly above $\Lambda_\eps$.
Adding damping on the high modes gives the renormalized operator
\begin{equation}
    A_\eps:=A+\eps^{-1}Q_\eps,
    \qquad \dom(A_\eps)=\dom(A).
    \label{eq:renormalized_operator}
\end{equation}
It is again nonnegative and self-adjoint, with compact resolvent.
We can bound the spectra $\sigma(\cdot)$ under projections and write
\begin{equation}
\begin{aligned}
    \beta_\eps
        :=\sup\sigma(A|_{P_\eps\Hcal}),\quad
    \lambda_\eps^+
        :=\inf\sigma(A|_{Q_\eps\Hcal}),\quad
    \alpha_\eps
        :=\inf\sigma(A_\eps|_{Q_\eps\Hcal})
          =\lambda_\eps^++\eps^{-1},
\end{aligned}
    \label{eq:alpha_beta}
\end{equation}
with the convention $\beta_\eps=0$ if $P_\eps\Hcal=\{0\}$.
For every $0<\eps\leq1$,
\begin{equation}
    \beta_\eps\leq\Lambda_\eps=\mu\eps^{-2},
    \qquad
    \lambda_\eps^+>\Lambda_\eps,
    \qquad
    \alpha_\eps>\mu\eps^{-2}+\eps^{-1}.
    \label{eq:beta_alpha_bounds}
\end{equation}
Observe that for all sufficiently small $\eps$, the retained space after projection is nontrivial.
\end{assumption}

\begin{lemma}
\label{lem:created_gap}
Under Assumption \ref{ass:spectral_shift}, the spectral separation of $A_\eps$ between the retained and discarded subspaces is
\begin{equation}
    \gamma_\eps
    :=\alpha_\eps-\beta_\eps
    =\bigl(\lambda_\eps^+-\beta_\eps\bigr)+\eps^{-1}
    \geq\eps^{-1}.
    \label{eq:created_gap}
\end{equation}
Hence, for every fixed $L>0$, the standard gap condition from the theory of inertial manifolds
\begin{equation}
    \gamma_\eps>8L
    \label{eq:gap_condition}
\end{equation}
holds for all sufficiently small $\eps>0$.
\end{lemma}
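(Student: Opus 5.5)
The proof reduces to bookkeeping with the spectral decomposition of $A$. Since $Q_\eps$ is a spectral projection of $A$, it commutes with $A$, and $A_\eps$ is diagonal in the same orthonormal eigenbasis $\{e_j\}$ of $A$. The retained modes keep their eigenvalues $\lambda_j\leq\Lambda_\eps$. Each discarded mode is shifted to $\lambda_j+\eps^{-1}$ with $\lambda_j>\Lambda_\eps$. This yields $\sigma(A_\eps|_{P_\eps\Hcal})=\sigma(A|_{P_\eps\Hcal})$, so the upper edge of the retained spectrum of $A_\eps$ is $\beta_\eps$. Similarly, $\inf\sigma(A_\eps|_{Q_\eps\Hcal})=\lambda_\eps^++\eps^{-1}=\alpha_\eps$, consistent with \eqref{eq:alpha_beta}.

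With this, the identity $\gamma_\eps=(\lambda_\eps^+-\beta_\eps)+\eps^{-1}$ is just the definition $\gamma_\eps=\alpha_\eps-\beta_\eps$ combined with $\alpha_\eps=\lambda_\eps^++\eps^{-1}$. For the lower bound I will use \eqref{eq:beta_alpha_bounds}: $\beta_\eps\leq\Lambda_\eps<\lambda_\eps^+$. The first inequality holds because $P_\eps$ projects onto eigenvalues in $[0,\Lambda_\eps]$; if $P_\eps\Hcal=\{0\}$, the convention $\beta_\eps=0\leq\Lambda_\eps$ applies. The second holds because $Q_\eps\Hcal$ is spanned by eigenvectors with $\lambda_j>\Lambda_\eps$. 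The infimum over these is attained, since the eigenvalues are discrete and tend to infinity, so only finitely many lie below any level. Hence $\lambda_\eps^+-\beta_\eps>0$ and $\gamma_\eps\geq\eps^{-1}$, in fact strictly.

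For the gap condition, fix $L>0$. Then $\gamma_\eps\geq\eps^{-1}>8L$ whenever $0<\eps<1/(8L)$, together with the standing restriction $\eps\leq1$. So \eqref{eq:gap_condition} holds for all $\eps\in(0,\min\{1,(8L)^{-1}\})$.

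The only point needing care is the strictness $\lambda_\eps^+>\Lambda_\eps$. It uses that the infimum is attained, which follows from compactness of the resolvent and the discreteness of the spectrum. Even without strictness, $\lambda_\eps^+\geq\Lambda_\eps\geq\beta_\eps$ would suffice for $\gamma_\eps\geq\eps^{-1}$. There is no real obstacle in this argument.
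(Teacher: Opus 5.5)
Your proposal is correct and follows the paper's argument: the shift leaves the retained eigenvalues unchanged and adds $\eps^{-1}$ to every discarded one, and $\lambda_\eps^+>\beta_\eps$ gives $\gamma_\eps\geq\eps^{-1}$. You add two details the paper leaves implicit, namely that $\lambda_\eps^+$ is attained (so $\lambda_\eps^+>\Lambda_\eps$) and the explicit range $0<\eps<\min\{1,(8L)^{-1}\}$ for the gap condition.
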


\begin{proof}
The shift adds $\eps^{-1}$ to every high-mode eigenvalue and leaves the low-mode eigenvalues unchanged. 
Since $\lambda_\eps^+>\beta_\eps$, the estimate follows.
\end{proof}

Having controlled the spectral properties, we should also aim to control how projected solutions converge as $\eps$ tends to zero.

\begin{lemma}
\label{lem:tail}
Let $0\leq a<r$.  Then, for every $u\in\Hcal^r$,
\begin{equation}
    \norm{Q_\eps u}_a
    \leq
    (1+\Lambda_\eps)^{-(r-a)/2}\norm{u}_r
    \leq
    C_{\mu,r,a}\,\eps^{r-a}\norm{u}_r,
    \qquad 0<\eps\leq1.
    \label{eq:tail-estimate}
\end{equation}
In particular, $P_\eps u\to u$ in $\Hcal$ for every $u\in\Hcal$, and the convergence $Q_\eps u\to0$ is uniform on bounded subsets of $\Hcal^r$ in the $\Hcal^a$ norm.
\end{lemma}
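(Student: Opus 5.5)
The natural route is the spectral theorem for $A$, together with the observation that on the range of $Q_\eps$ the weight $(1+\lambda)$ is bounded below by $1+\Lambda_\eps$. Since $A$ has compact resolvent, there is an orthonormal eigenbasis $(e_j)$ with $Ae_j=\lambda_je_j$. Every $u\in\Hcal$ expands as $u=\sum_j u_je_j$, and then
\[
    \norm{u}_r^2=\sum_j(1+\lambda_j)^r|u_j|^2 ,
    \qquad
    Q_\eps u=\sum_{\lambda_j>\Lambda_\eps}u_je_j ,
\]
because $Q_\eps=\mathbf 1_{(\Lambda_\eps,\infty)}(A)$. Consequently
\[
    \norm{Q_\eps u}_a^2
    =\sum_{\lambda_j>\Lambda_\eps}(1+\lambda_j)^{-(r-a)}(1+\lambda_j)^r|u_j|^2 .
\]
For $\lambda_j>\Lambda_\eps$ and $r-a>0$ we have $(1+\lambda_j)^{-(r-a)}\le(1+\Lambda_\eps)^{-(r-a)}$. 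Pulling this factor out and bounding the remaining sum by $\norm{u}_r^2$ gives the first inequality. The same computation can be phrased via functional calculus, using $\norm{(I+A)^{-(r-a)/2}Q_\eps}\le(1+\Lambda_\eps)^{-(r-a)/2}$.

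For the second inequality, insert $\Lambda_\eps=\mu\eps^{-2}$. Then
\[
    (1+\mu\eps^{-2})^{-(r-a)/2}\le(\mu\eps^{-2})^{-(r-a)/2}=\mu^{-(r-a)/2}\eps^{r-a},
\]
so one may take $C_{\mu,r,a}=\mu^{-(r-a)/2}$. The restriction $\eps\le1$ plays no real role here.

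For the consequences, uniform convergence on bounded subsets of $\Hcal^r$ in the $\Hcal^a$ norm follows directly, since $\sup_{\norm{u}_r\le R}\norm{Q_\eps u}_a\le CR\eps^{r-a}\to0$.

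The pointwise convergence $P_\eps u\to u$ for a general $u\in\Hcal$ is the only step that is not immediate, because $u$ need not lie in any $\Hcal^r$ with $r>0$. I would handle it directly: $\norm{Q_\eps u}^2=\sum_{\lambda_j>\Lambda_\eps}|u_j|^2$ is the tail of a convergent series, and $\Lambda_\eps\to\infty$ as $\eps\to0$, so this tail tends to zero. Alternatively, one can use a density argument: approximate $u$ by some $w\in\Hcal^1$, and combine the $a=0$, $r=1$ estimate with $\norm{Q_\eps}\le1$.
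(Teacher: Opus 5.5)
Your proposal is correct and follows essentially the same route as the paper. Both arguments expand in the eigenbasis, pull the factor $(1+\Lambda_\eps)^{-(r-a)}$ out of the tail sum, and take $C_{\mu,r,a}=\mu^{-(r-a)/2}$. Your tail-of-a-convergent-series argument for $P_\eps u\to u$ in $\Hcal$ is just the explicit form of the paper's approximation by finite spectral sums.
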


\begin{proof}
Let $(\phi_j)$ be an orthonormal eigenbasis of $A$ and write $u=\sum_j u_j\phi_j$.  Since every eigenvalue represented in $Q_\eps\Hcal$ is strictly larger than $\Lambda_\eps$,
\[
    \norm{Q_\eps u}_a^2
    =\sum_{\lambda_j>\Lambda_\eps}(1+\lambda_j)^a\abs{u_j}^2\leq
      (1+\Lambda_\eps)^{-(r-a)}
      \sum_{\lambda_j>\Lambda_\eps}(1+\lambda_j)^r\abs{u_j}^2\leq
      (1+\Lambda_\eps)^{-(r-a)}\norm{u}_r^2.
\]
This proves the first part of the estimate.  
Since $\Lambda_\eps=\mu\eps^{-2}$,
\[
    (1+\Lambda_\eps)^{-(r-a)/2}
    \leq \mu^{-(r-a)/2}\eps^{r-a},
\]
the second part follows.
Strong convergence on $\Hcal$ follows by approximation with finite spectral sums.
\end{proof}

\begin{remark}
Note that at the endpoint $a=r$, one still has $Q_\eps u\to0$ in $\Hcal^r$ for each fixed $u\in\Hcal^r$.
However, $\norm{Q_\eps}_{\mathcal L(\Hcal^r)}=1$, so the convergence is not uniform on its unit ball.
\end{remark}

Under Assumptions \ref{ass:operator}, \ref{ass:nonlinearity} and \ref{ass:spectral_shift} consider the original equation
\begin{equation}
    \dot v+Av=F(v),
    \qquad v(0)=v_0,
    \label{eq:original}
\end{equation}
and, for $\eps>0$, the renormalized equation
\begin{equation}
    \dot v^\eps+A_\eps v^\eps=F(v^\eps),
    \qquad v^\eps(0)=v_0.
    \label{eq:shifted}
\end{equation}
The contraction semigroups and global Lipschitz continuity of $F$ give unique global mild solutions in $\Hcal$; see, for example, \cite{amann1995linear,lunardi2012analytic}.
We denote the corresponding semiflows by
\[
    S(t)v_0:=v(t;v_0),
    \qquad
    S_\eps(t)v_0:=v^\eps(t;v_0).
\]
It is then natural to try to find estimates for combinations of the linear(ized) evolution, the full semigroup, and various projections to be able to control the dynamics.

\begin{proposition}
\label{prop:tail-dynamics}
For every $\eps>0$ and $t\geq0$,
\begin{align}
    \norm{\e^{-tA}Q_\eps}_{\mathcal L(\Hcal)}
        &\leq \e^{-\lambda_\eps^+t},
        \label{eq:original-high-decay}\\
    \norm{\e^{-tA_\eps}Q_\eps}_{\mathcal L(\Hcal)}
        &\leq \e^{-\alpha_\eps t}.
        \label{eq:high-decay}
\end{align}
The group on the finite-dimensional low-mode space satisfies, for $t\leq0$,
\begin{equation}
    \norm{\e^{-tA}P_\eps}_{\mathcal L(P_\eps\Hcal)}
        \leq \e^{\beta_\eps\abs{t}}.
    \label{eq:low-backward}
\end{equation}
Moreover, for every $v_0\in\Hcal$ and $t\geq0$,
\begin{align}
    \norm{Q_\eps S(t)v_0}_{\Hcal}
    &\leq
      \e^{-\lambda_\eps^+t}\norm{Q_\eps v_0}_{\Hcal}
      +\frac{M}{\lambda_\eps^+},
    \label{eq:original-tail-dynamic}\\
    \norm{Q_\eps S_\eps(t)v_0}_{\Hcal}
    &\leq
      \e^{-\alpha_\eps t}\norm{Q_\eps v_0}_{\Hcal}
      +\frac{M}{\alpha_\eps}.
    \label{eq:renormalised-tail-dynamic}
\end{align}
\end{proposition}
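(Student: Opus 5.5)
All the linear estimates follow from the spectral theorem in an orthonormal eigenbasis $(\phi_j)$ of $A$, as in the proof of Lemma \ref{lem:tail}. The nonlinear tail bounds then follow from the variation-of-constants formula projected onto $Q_\eps\Hcal$.

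\textbf{Linear estimates.} For $u=\sum_j u_j\phi_j$ we have
\[
    \e^{-tA}Q_\eps u=\sum_{\lambda_j>\Lambda_\eps}\e^{-\lambda_j t}u_j\phi_j .
\]
Every $\lambda_j$ in this sum satisfies $\lambda_j\geq\lambda_\eps^+$ by \eqref{eq:alpha_beta}. Hence Parseval gives $\norm{\e^{-tA}Q_\eps u}^2\leq \e^{-2\lambda_\eps^+t}\norm{Q_\eps u}^2\leq\e^{-2\lambda_\eps^+t}\norm{u}^2$, which is \eqref{eq:original-high-decay}.

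The operator $A_\eps$ has the same eigenvectors as $A$, with eigenvalues $\lambda_j$ on $P_\eps\Hcal$ and $\lambda_j+\eps^{-1}$ on $Q_\eps\Hcal$. Since $Q_\eps$ commutes with $A$, the same computation gives \eqref{eq:high-decay}, with $\alpha_\eps=\lambda_\eps^++\eps^{-1}$.

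On the finite-dimensional space $P_\eps\Hcal$, the map $\e^{-tA}$ is a matrix exponential and defines a group. For $t\leq0$ it acts by the factors $\e^{\lambda_j\abs t}$ with $0\leq\lambda_j\leq\beta_\eps$. This yields \eqref{eq:low-backward}.

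\textbf{Nonlinear tail bounds.} The mild solution satisfies
\[
    v(t)=\e^{-tA}v_0+\int_0^t\e^{-(t-s)A}F(v(s))\,ds .
\]
Apply $Q_\eps$, which commutes with the semigroup, to obtain
\[
    Q_\eps S(t)v_0=\e^{-tA}Q_\eps v_0+\int_0^t\e^{-(t-s)A}Q_\eps F(v(s))\,ds .
\]
Now use \eqref{eq:original-high-decay}, the bound $\norm{Q_\eps F}\leq\norm F\leq M$ from Assumption \ref{ass:nonlinearity}, and
\[
    \int_0^t \e^{-\lambda_\eps^+(t-s)}\,ds\leq 1/\lambda_\eps^+ .
\]
This gives \eqref{eq:original-tail-dynamic}; the division is legitimate because $\lambda_\eps^+>\Lambda_\eps>0$.

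The same argument for the renormalized equation, using \eqref{eq:high-decay} and the mild formulation with $A_\eps$, gives \eqref{eq:renormalised-tail-dynamic}.

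\textbf{Main obstacle.} The only point needing care is the justification step. We must check that the mild solution formula holds for $A_\eps$: this is true because $A_\eps$ is a bounded perturbation of $A$ and is self-adjoint and nonnegative. We must also check that $Q_\eps$ commutes with both semigroups, which holds because $Q_\eps$ is a spectral projection of $A$ and hence commutes with $A_\eps$ as well. Both facts follow directly from functional calculus, so no real difficulty is expected.
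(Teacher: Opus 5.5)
Your proposal is correct and follows essentially the same route as the paper: you obtain the linear decay and backward-growth bounds from the spectral theorem, using that $P_\eps,Q_\eps$ commute with $A$ and $A_\eps$, and you get the tail bounds from the $Q_\eps$-projected variation-of-constants formula with $\norm{F}\leq M$ and $\int_0^t\e^{-\lambda_\eps^+(t-s)}\,\mathrm ds\leq 1/\lambda_\eps^+$. The only difference is cosmetic: you write the estimates explicitly in the eigenbasis via Parseval, while the paper states them directly through the spectral theorem.
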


\begin{proof}
Because $P_\eps$ and $Q_\eps$ are spectral projections of the operator $A$, they commute with $A$, with $A_\eps$, and with the corresponding semigroups.  
Moreover, on $Q_\eps\Hcal$ the spectrum of $A$ is bounded below by $\lambda_\eps^+$, while the spectrum of $A_\eps$ is bounded below by $\alpha_\eps$; see also \eqref{eq:alpha_beta} for the relevant definitions.  
The spectral theorem therefore yields \eqref{eq:original-high-decay} and \eqref{eq:high-decay}.  
Similarly, if $t\leq0$, then every eigenvalue of $A|_{P_\eps\Hcal}$ is at most $\beta_\eps$, and hence
\[
    \norm{\e^{-tA}P_\eps}_{\mathcal L(P_\eps\Hcal)}
    =\sup_{\lambda\in\sigma(A|_{P_\eps\Hcal})}\e^{\abs{t}\lambda}
    \leq\e^{\beta_\eps\abs{t}},
\]
which proves \eqref{eq:low-backward}. For the original equation, the variation-of-constants formula and commutation with $Q_\eps$ yield
\[
    Q_\eps S(t)v_0
    =\e^{-tA}Q_\eps v_0
     +\int_0^t\e^{-(t-s)A}Q_\eps F(S(s)v_0)\,\mathrm ds.
\]
Using \eqref{eq:original-high-decay} and $\norm{F}_{L^\infty(\Hcal;\Hcal)}\leq M$ gives
\[
\begin{aligned}
    \norm{Q_\eps S(t)v_0}_{\Hcal}
    &\leq
      \e^{-\lambda_\eps^+t}\norm{Q_\eps v_0}_{\Hcal}
      +M\int_0^t\e^{-\lambda_\eps^+(t-s)}\,\mathrm ds =\e^{-\lambda_\eps^+t}\norm{Q_\eps v_0}_{\Hcal}
      +\frac{M}{\lambda_\eps^+}\bigl(1-\e^{-\lambda_\eps^+t}\bigr).
\end{aligned}
\]
The proof of \eqref{eq:renormalised-tail-dynamic} is identical with $A$ replaced by $A_\eps$.  
\end{proof}

Having suitable phase spaces and estimates prepared, one can next aim to apply the theory of invariant manifold to the renormalized equation.
\section{Inertial Manifold for the Renormalized Equation}
\label{sec:inertial-manifold}

Before continuing, we note that various formulations of invariant manifold theory for PDEs are available. Here we focus on the formulation via inertial manifolds as it provides a clean entry point to the theory with relatively little technical overhead.

\begin{definition}[Inertial manifold]
\label{def:inertial-manifold}
Let $T(t)$ be a semiflow on $\Hcal$. 
A finite-dimensional Lipschitz submanifold $\Mcal\subset\Hcal$ is called an \emph{inertial manifold} for $T(t)$ if it is invariant, contains the global attractor whenever the latter exists, and has the exponential tracking property, i.e., there exist $\varkappa>0$ and $C\geq1$ such that for every $u_0\in\Hcal$ there exists $\bar u_0\in\Mcal$ satisfying
\begin{equation}
    \norm{T(t)u_0-T(t)\bar u_0}_{\Hcal}
    \leq
    C\e^{-\varkappa t}
    \norm{u_0-\bar u_0}_{\Hcal},
    \qquad t\geq0.
    \label{eq:tracking-definition}
\end{equation}
\end{definition}

\begin{theorem}[Inertial manifold for the renormalized equation]
\label{thm:inertial-manifold}
Assume Assumptions \ref{ass:operator}, \ref{ass:nonlinearity} and \ref{ass:spectral_shift}, and denote the spectral gap by \(\gamma_\eps:=\alpha_\eps-\beta_\eps\).
Suppose that
\begin{equation}
    \gamma_\eps\geq 8L.
    \label{eq:inertial-gap-condition}
\end{equation}
Then, the renormalized equation \eqref{eq:shifted} possesses a finite-dimensional Lipschitz inertial manifold
\begin{equation}
    \Mcal_\eps
    =
    \set{p+h_\eps(p):p\in P_\eps\Hcal},
    \qquad
    h_\eps:P_\eps\Hcal\longrightarrow Q_\eps\Hcal,
    \label{eq:graph-definition}
\end{equation}
where the graph map satisfies
\begin{equation}
    \norm{h_\eps}_{L^\infty(P_\eps\Hcal;\Hcal)}
    \leq
    \frac{M}{\alpha_\eps}
    \leq
    \frac{M\eps^2}{\mu+\eps}
    \leq
    \frac{M}{\mu}\eps^2,
    \label{eq:graph-sup-bound}
\end{equation}
and
\begin{equation}
    \Lip(h_\eps)
    \leq
    \frac{4L}{\gamma_\eps}
    \leq 4L\eps.
    \label{eq:graph-lip-bound}
\end{equation}
Moreover,
\begin{equation}
    S_\eps(t)\Mcal_\eps=\Mcal_\eps,
    \qquad t\geq0,
    \label{eq:strict-invariance}
\end{equation}
and the dynamics on $\Mcal_\eps$ are governed by the finite-dimensional equation
\begin{equation}
    \dot p+Ap
    =
    P_\eps F\bigl(p+h_\eps(p)\bigr),
    \qquad p\in P_\eps\Hcal.
    \label{eq:reduced-equation}
\end{equation}
Finally, there exist constants $C_\eps\geq1$ and $\varkappa_\eps>0$ such that for every $u_0\in\Hcal$ there exists $\widetilde u_0\in\Mcal_\eps$ with
\begin{equation}
    \norm{S_\eps(t)u_0-S_\eps(t)\widetilde u_0}_{\Hcal}
    \leq
    C_\eps\e^{-\varkappa_\eps t}
    \norm{u_0-\widetilde u_0}_{\Hcal},
    \qquad t\geq0.
    \label{eq:renormalized-tracking}
\end{equation}
Consequently, whenever $S_\eps(t)$ possesses a global attractor $\Acal_\eps$, one has
\[
    \Acal_\eps\subset\Mcal_\eps.
\]
\end{theorem}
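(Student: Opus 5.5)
We will construct $h_\eps$ by the Lyapunov--Perron method. This amounts to verifying the hypotheses of the classical spectral-gap theorem for inertial manifolds (Foias--Sell--Temam, in the form of \cite{Zelik2014}). The ingredients are the dichotomy estimates of Proposition \ref{prop:tail-dynamics} for the pair $(P_\eps,Q_\eps)$ and the gap of Lemma \ref{lem:created_gap}.

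Fix a weight $\theta\in(\beta_\eps,\alpha_\eps)$, chosen at the midpoint $\theta=(\alpha_\eps+\beta_\eps)/2$. On $P_\eps\Hcal$ the operator $A_\eps$ coincides with $A$, and it commutes with both projections. For $p_0\in P_\eps\Hcal$ we consider the space $C_\theta$ of continuous $u:(-\infty,0]\to\Hcal$ with $\sup_{t\le0}\e^{\theta t}\norm{u(t)}<\infty$. On $C_\theta$ we define
\[
\mathcal T u(t)=\e^{-tA}p_0-\int_t^0\e^{-(t-s)A}P_\eps F(u(s))\,\mathrm ds+\int_{-\infty}^t\e^{-(t-s)A_\eps}Q_\eps F(u(s))\,\mathrm ds .
\]
Using \eqref{eq:low-backward} and \eqref{eq:high-decay}, the Lipschitz constant of $\mathcal T$ in the weighted norm is bounded by $L\bigl((\theta-\beta_\eps)^{-1}+(\alpha_\eps-\theta)^{-1}\bigr)=4L/\gamma_\eps\le1/2$ under \eqref{eq:inertial-gap-condition}. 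This yields a unique fixed point $u(\cdot;p_0)$, and we set $h_\eps(p_0):=Q_\eps u(0;p_0)=\int_{-\infty}^0\e^{sA_\eps}Q_\eps F(u(s))\,\mathrm ds$.

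The sup bound \eqref{eq:graph-sup-bound} then follows directly: $\norm{F}\le M$ and \eqref{eq:high-decay} give $\norm{h_\eps}\le M\int_0^\infty\e^{-\alpha_\eps s}\,\mathrm ds=M/\alpha_\eps$, and \eqref{eq:beta_alpha_bounds} gives $\alpha_\eps\ge\mu\eps^{-2}+\eps^{-1}$. For the Lipschitz bound \eqref{eq:graph-lip-bound}, compare the fixed points for $p_0,p_0'$. Since $\norm{\e^{-tA}(p_0-p_0')}\le\e^{-\beta_\eps t}\norm{p_0-p_0'}$ for $t\le0$, the contraction gives $\norm{u-u'}_{C_\theta}\le2\norm{p_0-p_0'}$. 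Inserting this into the formula for $h_\eps$ gives $\Lip(h_\eps)\le 2L/(\alpha_\eps-\theta)=4L/\gamma_\eps\le4L\eps$, using $\gamma_\eps\ge\eps^{-1}$.

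Invariance and the reduced equation come next. Fixed points are exactly the backward mild solutions of \eqref{eq:shifted} lying in $C_\theta$, and time translation preserves this class. Hence $S_\eps(t)\Mcal_\eps\subset\Mcal_\eps$. Surjectivity in \eqref{eq:strict-invariance} follows because the finite-dimensional ODE \eqref{eq:reduced-equation} has a globally Lipschitz right-hand side and is therefore solvable backward. Projecting \eqref{eq:shifted} with $P_\eps$ gives \eqref{eq:reduced-equation}, since $P_\eps A_\eps=AP_\eps$.

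The main obstacle is exponential tracking \eqref{eq:renormalized-tracking}. The plan is a forward Lyapunov--Perron argument for the difference $w=S_\eps(t)\widetilde u_0-S_\eps(t)u_0$ in the space of functions on $[0,\infty)$ weighted by $\e^{\theta t}$. Here one solves for a trajectory on $\Mcal_\eps$ whose $Q_\eps$-part is free and whose $P_\eps$-part is fixed at $+\infty$; the same contraction constant $4L/\gamma_\eps\le1/2$ applies. This produces $\widetilde u_0\in\Mcal_\eps$ with $\norm{w(t)}\le C_\eps\e^{-\theta t}\norm{u_0-\widetilde u_0}$, and we take $\varkappa_\eps=\theta$. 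The constant $C_\eps$ may depend on $\eps$ via the factor $8L$ versus $\gamma_\eps$; this dependence is allowed by the statement. Alternatively, one may verify the cone condition and squeezing property and cite \cite{Zelik2014}.

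Finally, for attractor containment: every point $a$ of $\Acal_\eps$ lies on a bounded complete trajectory. A bounded trajectory belongs to $C_\theta$ for every $\theta>0$, hence it is a fixed point of $\mathcal T$ with $p_0=P_\eps a$, and so $a\in\Mcal_\eps$. If $\beta_\eps=0$, one uses any small $\theta>0$, which the gap still permits.
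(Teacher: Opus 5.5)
Your proposal is correct and follows the paper's route: the paper simply cites the classical spectral-gap (Lyapunov--Perron) inertial-manifold theorem for $A_\eps$, and you carry out that construction with the midpoint weight $\theta=(\alpha_\eps+\beta_\eps)/2$, which reproduces exactly the stated constants $M/\alpha_\eps$ and $4L/\gamma_\eps$. Two minor corrections: in the tracking step, imposing $\widetilde u_0\in\Mcal_\eps$ adds a term $\Lip(h_\eps)L/(\theta-\beta_\eps)\le 8L^2/\gamma_\eps^2$ to the contraction constant, so it is at most $5/8$ rather than $1/2$ (still a contraction); and the small-$\theta$ remark for $\beta_\eps=0$ is unnecessary and would in fact spoil the contraction, since bounded complete trajectories already lie in $C_\theta$ for the midpoint $\theta$.
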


\begin{proof}
    Since the renormalized system possesses a spectral gap of size $\gamma_\varepsilon\geq 8L$, the conclusions of the result follow from classical inertial manifold theory: see, for example, \cite[Chapter~VIII, Theorem~3.1]{Temam1997} and \cite[Theorem 2.1]{Zelik2014}.
\end{proof}


\section{Approximate semi-conjugacy}
\label{sec:approximate-semiconjugacy}

The inertial manifold $\Mcal_{\eps}$ provides an exact finite-dimensional reduction of the \emph{renormalized} equation.  
The question we answer in this section is \emph{how the two dynamical systems are related.}\\
To do this, we compare the reduced dynamics with the original semiflow $S(t)$ through a surjective graph retraction $R_\eps: \Hcal\to \Mcal_\eps$.
We cannot, in general, expect a bijective mapping between the phase space and the inertial manifold and as a result cannot expect the reduced system be topological conjugate to the original system.
Therefore, we consider a semi-conjugacy between the two dynamical systems, via the following relation.
\[R_\eps S(t)=S_\eps^{\Mcal}(t)R_\eps.\]
Here we estimate the defect in this identity on finite intervals and, under additional stability, uniformly in time.

Throughout this section, Assumptions~\ref{ass:operator}, \ref{ass:nonlinearity} and~\ref{ass:spectral_shift} hold, and $0<\eps\leq1$ is chosen so that the spectral gap assumption is satisfied, i.e., $\gamma_\eps\geq8L$.
We write $S_\eps^{\Mcal}(t):=S_\eps(t)|_{\Mcal_\eps}$.

\subsection{Retraction onto the inertial graph}
\label{sec:retraction}

We define the graph embedding
\begin{equation}
    J_\eps:P_\eps\Hcal\to\Mcal_\eps,
    \qquad J_\eps(p):=p+h_\eps(p),
    \label{eq:graph-embedding}
\end{equation}
and consequently the retraction is given by the composition of the low mode projection with the graph embedding
\begin{equation}
    R_\eps:=J_\eps P_\eps,
    \qquad R_\eps u=P_\eps u+h_\eps(P_\eps u).
    \label{eq:graph-retraction}
\end{equation}
Thus $R_\eps$ preserves the low modes and replaces the high component by its value on the invariant graph.

\begin{proposition}
\label{prop:retraction}
The map $R_\eps:\Hcal\to\Mcal_\eps$ has the following properties.
\begin{enumerate}[label=\textup{(\alph*)},leftmargin=2em]
\item
\begin{equation}
    R_\eps(\Hcal)=\Mcal_\eps,
    \qquad R_\eps|_{\Mcal_\eps}=\Id,
    \qquad R_\eps^2=R_\eps.
    \label{eq:retraction-idempotent}
\end{equation}
\item The low modes are preserved:
\begin{equation}
    P_\eps R_\eps u=P_\eps u,
    \qquad u\in\Hcal.
    \label{eq:retraction-low-mode}
\end{equation}
\item The retraction has the same fibres as $P_\eps$:
\begin{equation}
    R_\eps u=R_\eps w
    \quad\Longleftrightarrow\quad P_\eps u=P_\eps w.
    \label{eq:retraction-fibres}
\end{equation}
\item The embedding and retraction are globally Lipschitz, with
\begin{equation}
    \Lip(R_\eps)\leq\Lip(J_\eps)
    \leq c_\eps:=\sqrt{1+\frac{16L^2}{\gamma_\eps^2}}
    \leq1+8L^2\eps^2.
    \label{eq:retraction-lip}
\end{equation}
\item For every $u\in\Hcal$,
\begin{equation}
    \norm{R_\eps u-u}_{\Hcal}
    \leq\norm{Q_\eps u}_{\Hcal}+\frac{M}{\alpha_\eps}.
    \label{eq:retraction-error}
\end{equation}
\end{enumerate}
\end{proposition}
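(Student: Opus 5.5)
The proof is a direct verification from the definitions $R_\eps=J_\eps P_\eps$ and $J_\eps(p)=p+h_\eps(p)$. We use three facts: $P_\eps$ is an orthogonal projection, $h_\eps$ takes values in $Q_\eps\Hcal$, and the bounds \eqref{eq:graph-sup-bound}, \eqref{eq:graph-lip-bound} from Theorem~\ref{thm:inertial-manifold}.

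We start with (b). Applying $P_\eps$ to $R_\eps u=P_\eps u+h_\eps(P_\eps u)$ and using $P_\eps h_\eps(\cdot)=0$ gives $P_\eps R_\eps u=P_\eps u$.

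For (a), surjectivity holds because any point $p+h_\eps(p)\in\Mcal_\eps$ with $p\in P_\eps\Hcal$ satisfies $P_\eps(p+h_\eps(p))=p$. Hence $R_\eps(p+h_\eps(p))=p+h_\eps(p)$. This shows both $R_\eps(\Hcal)=\Mcal_\eps$ and $R_\eps|_{\Mcal_\eps}=\Id$. Idempotence $R_\eps^2=R_\eps$ then follows since $R_\eps u\in\Mcal_\eps$.

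For (c), if $P_\eps u=P_\eps w$ then clearly $R_\eps u=R_\eps w$. Conversely, if $R_\eps u=R_\eps w$, applying $P_\eps$ and using (b) gives $P_\eps u=P_\eps w$.

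For (d), take $p,q\in P_\eps\Hcal$. The difference $J_\eps p-J_\eps q$ splits orthogonally into $p-q\in P_\eps\Hcal$ and $h_\eps(p)-h_\eps(q)\in Q_\eps\Hcal$. By Pythagoras and \eqref{eq:graph-lip-bound},
\[
    \norm{J_\eps p-J_\eps q}_{\Hcal}^2\leq\Bigl(1+\frac{16L^2}{\gamma_\eps^2}\Bigr)\norm{p-q}_{\Hcal}^2 .
\]
Since $\norm{P_\eps}\leq1$, this gives $\Lip(R_\eps)\leq\Lip(J_\eps)\leq c_\eps$. For the last inequality we use $\sqrt{1+x}\leq1+x/2$ together with $\gamma_\eps\geq\eps^{-1}$ from Lemma~\ref{lem:created_gap}, which yields $c_\eps\leq1+8L^2\eps^2$.

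For (e), write $u=P_\eps u+Q_\eps u$. Then $R_\eps u-u=h_\eps(P_\eps u)-Q_\eps u$. The triangle inequality and \eqref{eq:graph-sup-bound} give the stated bound.

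Every step is routine. The only point needing care is the orthogonality of the splitting in (d), which is what gives the sharp square-root constant rather than $1+4L/\gamma_\eps$.
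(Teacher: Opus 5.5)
Your proposal is correct and follows essentially the same route as the paper. Like the paper, you derive (a)--(c) from $P_\eps J_\eps=\Id$, obtain (d) from the orthogonal splitting together with $\sqrt{1+x}\leq1+x/2$ and $\gamma_\eps\geq\eps^{-1}$, and get (e) from $R_\eps u-u=h_\eps(P_\eps u)-Q_\eps u$ and the sup bound on $h_\eps$.
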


\begin{proof}
The identity $P_\eps J_\eps=\Id$ gives \textup{(a)}--\textup{(c)}.
By \eqref{eq:graph-lip-bound}, orthogonality yields, for $p_1,p_2\in P_\eps\Hcal$,
\[
    \norm{J_\eps p_1-J_\eps p_2}_{\Hcal}^2
    =\norm{p_1-p_2}_{\Hcal}^2
      +\norm{h_\eps(p_1)-h_\eps(p_2)}_{\Hcal}^2
    \leq c_\eps^2\norm{p_1-p_2}_{\Hcal}^2.
\]
Since $R_\eps=J_\eps P_\eps$ and $\sqrt{1+x}\leq1+x/2$ for $x\geq0$, this proves \textup{(d)}.
Finally, $R_\eps u-u=h_\eps(P_\eps u)-Q_\eps u$, so \textup{(e)} follows from \eqref{eq:graph-sup-bound}.
\end{proof}

\begin{corollary}
\label{cor:retraction-convergence}
For every $u\in\Hcal$,
\begin{equation}
    R_\eps u\longrightarrow u\quad\text{in }\Hcal
    \qquad\text{as }\eps\to0,
    \label{eq:retraction-strong}
\end{equation}
uniformly on compact subsets of $\Hcal$.
If $B\subset\Hcal^r$ is bounded for some $r>0$, then
\begin{equation}
    \sup_{u\in B}\norm{R_\eps u-u}_{\Hcal}
    \leq(1+\Lambda_\eps)^{-r/2}\sup_{u\in B}\norm{u}_r
       +\frac{M}{\alpha_\eps}
    \leq C_B\eps^r+\frac{M\eps^2}{\mu+\eps}.
    \label{eq:retraction-regular-rate}
\end{equation}
\end{corollary}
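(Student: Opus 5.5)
The plan is to read both claims of Corollary~\ref{cor:retraction-convergence} off the pointwise bound \eqref{eq:retraction-error} of Proposition~\ref{prop:retraction}(e),
\[
    \norm{R_\eps u-u}_{\Hcal}\leq\norm{Q_\eps u}_{\Hcal}+\frac{M}{\alpha_\eps},
\]
and then control the two terms on the right separately. The second term does not depend on $u$. By \eqref{eq:beta_alpha_bounds} we have $\alpha_\eps>\mu\eps^{-2}+\eps^{-1}$, so $M/\alpha_\eps\leq M\eps^2/(\mu+\eps)$, exactly as in \eqref{eq:graph-sup-bound}. This term therefore tends to zero uniformly in $u$. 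Everything reduces to controlling the tail $\norm{Q_\eps u}_{\Hcal}$.

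\textbf{Pointwise convergence.} For fixed $u\in\Hcal$, Lemma~\ref{lem:tail} already gives $P_\eps u\to u$, that is, $\norm{Q_\eps u}_{\Hcal}\to0$. Combined with the uniform second term, this gives \eqref{eq:retraction-strong}.

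\textbf{Uniformity on compact sets.} This is the step needing an actual argument, though a standard one. Let $K\subset\Hcal$ be compact and $\delta>0$.
\begin{enumerate}[label=\textup{(\roman*)},leftmargin=2em]
\item Cover $K$ by finitely many balls $B(u_i,\delta)$, $i=1,\dots,N$.
\item Since $\norm{Q_\eps}_{\mathcal L(\Hcal)}\leq1$, for every $u\in K$ with $\norm{u-u_i}_{\Hcal}<\delta$ we get
\[
    \norm{Q_\eps u}_{\Hcal}\leq\norm{Q_\eps u_i}_{\Hcal}+\delta .
\]
\item By pointwise convergence at the finitely many centres, choose $\eps_0$ such that $\max_i\norm{Q_\eps u_i}_{\Hcal}<\delta$ for all $\eps<\eps_0$.
\end{enumerate}
Then $\sup_K\norm{Q_\eps u}_{\Hcal}\leq2\delta$ for $\eps<\eps_0$, which yields uniform convergence on $K$. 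An equivalent route is that the equicontinuous family $\{Q_\eps\}$ converging strongly to $0$ converges uniformly on compacts.

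\textbf{Rate for bounded $B\subset\Hcal^r$.} Apply Lemma~\ref{lem:tail} with $a=0$:
\[
    \norm{Q_\eps u}_{\Hcal}\leq(1+\Lambda_\eps)^{-r/2}\norm{u}_r .
\]
Take the supremum over $B$ and use $(1+\Lambda_\eps)^{-r/2}\leq\mu^{-r/2}\eps^r$. Setting $C_B:=\mu^{-r/2}\sup_B\norm{u}_r$ and adding the bound for $M/\alpha_\eps$ gives \eqref{eq:retraction-regular-rate}. No step is a real obstacle; the only nontrivial point is the compactness argument.
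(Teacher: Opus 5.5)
Your proof is correct and follows the paper's route: you combine the pointwise bound of Proposition~\ref{prop:retraction}(e) with Lemma~\ref{lem:tail}, and you get uniformity on compact sets from $\norm{Q_\eps}_{\mathcal L(\Hcal)}\leq1$; your finite $\delta$-net argument simply spells out the paper's one-line upgrade from strong convergence to uniform convergence on compacts. The rate estimate, using Lemma~\ref{lem:tail} with $a=0$ and the bound $M/\alpha_\eps\leq M\eps^2/(\mu+\eps)$, is handled exactly as in the paper.
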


\begin{proof}
Use \eqref{eq:retraction-error} and Lemma~\ref{lem:tail}.
The uniform bound $\norm{Q_\eps}_{\mathcal L(\Hcal)}\leq1$ upgrades strong convergence to uniform convergence on compact sets.
\end{proof}

\begin{remark}
We observe that the convergence is not uniform on any ball of positive radius in $\Hcal$.
Indeed, in a ball centered at $u_c$, choose $s>0$ smaller than its radius and a unit vector $e_\eps\in Q_\eps\Hcal$.
Then, the points $u_\eps^\pm=u_c\pm s e_\eps$ have the same retraction, whereas their distance is $2s$.
Hence
\[
    \max_{\pm}\norm{R_\eps u_\eps^\pm-u_\eps^\pm}_{\Hcal}\geq s.
\]
\end{remark}

\subsection{Finite-time approximate semi-conjugacy}
\label{sec:finite-time}

Let $\Phi_\eps(t)$ be the flow of the reduced equation \eqref{eq:reduced-equation} on $P_\eps\Hcal$.
Using the strict invariance of the inertial manifold we obtain the following conjugacy between the semiflow of the renormalized system restricted onto the inertial manifold and the flow of the reduced system on the inertial manifold
\[
    S_\eps^{\Mcal}(t)J_\eps=J_\eps\Phi_\eps(t),
    \qquad t\geq0.
\]
Consequently, with $w(t;u):=P_\eps S(t)u-\Phi_\eps(t)P_\eps u$, Proposition~\ref{prop:retraction} gives
\[
    \norm{w(t;u)}_{\Hcal}
    \leq\norm{R_\eps S(t)u-S_\eps^{\Mcal}(t)R_\eps u}_{\Hcal}
    \leq c_\eps\norm{w(t;u)}_{\Hcal}.
\]
Thus the semi-conjugacy defect measures the low-mode error up to a factor $c_\eps=1+O(\eps^2)$.
This motivate the following definition.
\begin{definition}[Finite-time approximate semi-conjugacy]
\label{def:finite-time-semiconjugacy}
For $T>0$ and $u\in\Hcal$, define
\begin{equation}
    \Dcal_{\eps,T}(u)
    :=\sup_{0\leq t\leq T}
       \norm{R_\eps S(t)u-S_\eps^{\Mcal}(t)R_\eps u}_{\Hcal}.
    \label{eq:finite-time-defect}
\end{equation}
The family $(R_\eps,S_\eps^{\Mcal})$ gives a \emph{finite-time approximate semi-conjugacy} of $S(t)$ on $B\subset\Hcal$ if, for every $T>0$,
\begin{equation}
    \sup_{u\in B}\Dcal_{\eps,T}(u)\longrightarrow0
    \qquad\text{as }\eps\to0.
    \label{eq:approx-semiconjugacy-definition}
\end{equation}
For fixed $\eps$, the vanishing of the defect for all $u\in\Hcal$ and all $T>0$ means that $R_\eps$ is an exact semi-conjugacy.
\end{definition}
We refer to \cite{robinson1999dynamical} for more details on topological conjugacy and semi-conjugacy.

For brevity, we set
\begin{equation}
    d_\eps:=\frac{M}{\lambda_\eps^+}+\frac{M}{\alpha_\eps}\leq\frac{2M\eps^2}{\mu},
    \qquad
    \rho_\eps(u):=\norm{Q_\eps u}_{\Hcal}+d_\eps.
    \label{eq:rho}
\end{equation}

\begin{theorem}[Finite-time approximate semi-conjugacy]
\label{thm:finite-semiconjugacy}
Under the standing assumptions, for every $T>0$ and $u_0\in\Hcal$,
\begin{equation}
    \Dcal_{\eps,T}(u_0)
    \leq c_\eps\left[
       \frac{L(\e^{LT}-\e^{-\lambda_\eps^+T})}{L+\lambda_\eps^+}
         \norm{Q_\eps u_0}_{\Hcal}
       +(\e^{LT}-1)d_\eps
    \right].
    \label{eq:finite-semiconjugacy-estimate}
\end{equation}
Consequently, for every bounded $B\subset\Hcal$,
\begin{equation}
    \sup_{u_0\in B}\Dcal_{\eps,T}(u_0)\leq C_{B,T}\eps^2.
    \label{eq:bounded-semiconjugacy-rate}
\end{equation}
\end{theorem}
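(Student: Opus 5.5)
By the discussion preceding Definition~\ref{def:finite-time-semiconjugacy}, it suffices to bound the low-mode error $w(t):=P_\eps S(t)u_0-\Phi_\eps(t)P_\eps u_0$ on $[0,T]$. Once this is done, the estimate for $\Dcal_{\eps,T}$ follows by multiplying by $c_\eps$, using \eqref{eq:retraction-lip} and $S_\eps^{\Mcal}(t)J_\eps=J_\eps\Phi_\eps(t)$. Write $v(t)=S(t)u_0$, $p(t)=\Phi_\eps(t)P_\eps u_0$, and let $q(t):=Q_\eps v(t)$ denote the high modes of the original trajectory.

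Since $P_\eps$ commutes with $A$, and $A_\eps=A$ on $P_\eps\Hcal$, both $P_\eps v$ and $p$ solve linear equations with the same operator $A|_{P_\eps\Hcal}$. Their forcings are $P_\eps F(v)$ and $P_\eps F(p+h_\eps(p))$, respectively. Because $P_\eps\Hcal$ is finite-dimensional and $A\ge 0$ there, I would use an energy estimate rather than variation of constants:
\[
\tfrac12\tfrac{d}{dt}\norm{w}^2+\langle Aw,w\rangle=\langle P_\eps(F(v)-F(p+h_\eps(p))),w\rangle .
\]
With $\langle Aw,w\rangle\ge0$, this gives $\tfrac{d}{dt}\norm{w}\le L\norm{v-p-h_\eps(p)}$, at least where $w\neq0$; a standard argument covers the points where $w=0$. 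Since $v-p-h_\eps(p)=w+q-h_\eps(p)$, I obtain
\[
\tfrac{d}{dt}\norm{w}\le L\norm{w}+L\norm{q(t)}+L\norm{h_\eps(p)}.
\]

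Next I bound the forcing terms. By \eqref{eq:graph-sup-bound}, $\norm{h_\eps(p)}\le M/\alpha_\eps$. The crude bound \eqref{eq:original-tail-dynamic} is not quite sharp enough to give the stated constant, so I would return to its variation-of-constants proof and keep the exact form
\[
\norm{q(s)}\le \e^{-\lambda_\eps^+s}\norm{Q_\eps u_0}+\frac{M}{\lambda_\eps^+}.
\]
Adding the two bounds, the constant part of the forcing is at most $L d_\eps$.

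Gronwall with $w(0)=0$ then yields
\[
\norm{w(t)}\le \int_0^t \e^{L(t-s)}\Bigl(L\e^{-\lambda_\eps^+ s}\norm{Q_\eps u_0}+Ld_\eps\Bigr)\,ds .
\]
The first integral equals $\frac{L(\e^{Lt}-\e^{-\lambda_\eps^+t})}{L+\lambda_\eps^+}\norm{Q_\eps u_0}$. The second equals $(\e^{Lt}-1)d_\eps$. Both are increasing in $t$, so taking the supremum over $t\le T$ and multiplying by $c_\eps$ gives \eqref{eq:finite-semiconjugacy-estimate}.

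For the rate \eqref{eq:bounded-semiconjugacy-rate}, note that $\norm{Q_\eps u_0}\le\sup_B\norm{u}$, and that $L/(L+\lambda_\eps^+)\le L/\lambda_\eps^+\le L\eps^2/\mu$ by \eqref{eq:beta_alpha_bounds}. Together with $d_\eps\le 2M\eps^2/\mu$ and $c_\eps\le 1+8L^2\eps^2\le 1+8L^2$, this gives $C_{B,T}\eps^2$.

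The key step is the treatment of the high-mode term $q$. It is not small pointwise for general data, but its coefficient $\e^{-\lambda_\eps^+s}$ integrates to $O(\eps^2)$. That is exactly why I keep the time-dependent decay rather than a sup bound, which would only give $O(1)$.
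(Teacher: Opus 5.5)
Your proposal is correct and follows essentially the same route as the paper: an energy estimate for the low-mode difference $w$ using $A\geq0$ and the Lipschitz bound, the high-mode decay $\e^{-\lambda_\eps^+ s}\norm{Q_\eps u_0}_{\Hcal}+M/\lambda_\eps^+$ together with the graph bound $M/\alpha_\eps$, scalar Gronwall from $w(0)=0$, monotonicity in $t$, and finally the factor $c_\eps\geq\Lip(J_\eps)$. One small remark: the bound you say must be recovered from the variation-of-constants proof is exactly \eqref{eq:original-tail-dynamic} as stated, so no sharpening is actually needed.
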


\begin{proof}
Write $p(t)=P_\eps S(t)u_0$, $q(t)=Q_\eps S(t)u_0$, $p_\eps(t)=\Phi_\eps(t)P_\eps u_0$, and $w=p-p_\eps$.
Then $w(0)=0$ and
\begin{equation}
    \dot w+Aw
    =P_\eps\bigl[F(p+q)-F(p_\eps+h_\eps(p_\eps))\bigr].
    \label{eq:low-mode-difference}
\end{equation}
Non-negativity of the operator $A$ and Lipschitz continuity of $F$ give
\begin{equation}
    \frac{d}{dt}\norm{w}_{\Hcal}
    \leq L\norm{w}_{\Hcal}
       +L\bigl(\norm{q}_{\Hcal}+\norm{h_\eps(p_\eps)}_{\Hcal}\bigr).
    \label{eq:low-mode-difference-inequality}
\end{equation}
Proposition~\ref{prop:tail-dynamics} and Theorem~\ref{thm:inertial-manifold} yield
\begin{align}
    \norm{q(t)}_{\Hcal}
    &\leq\e^{-\lambda_\eps^+t}\norm{Q_\eps u_0}_{\Hcal}
        +\frac{M}{\lambda_\eps^+},
    \label{eq:original-high-mode-bound}\\
    \norm{h_\eps(p_\eps(t))}_{\Hcal}
    &\leq\frac{M}{\alpha_\eps}.
    \label{eq:graph-bound-finite-time}
\end{align}
Scalar comparison therefore gives
\begin{align}
    \norm{w(t)}_{\Hcal}
    &\leq L\int_0^t\e^{L(t-s)}
       \bigl(\e^{-\lambda_\eps^+s}\norm{Q_\eps u_0}_{\Hcal}+d_\eps\bigr)\,\mathrm ds
    =\frac{L(\e^{Lt}-\e^{-\lambda_\eps^+t})}{L+\lambda_\eps^+}
        \norm{Q_\eps u_0}_{\Hcal}
       +(\e^{Lt}-1)d_\eps.
    \label{eq:w-estimate}
\end{align}
The distance estimate above and monotonicity of this bound in time $t$ prove \eqref{eq:finite-semiconjugacy-estimate}.
Finally, using $(L+\lambda_\eps^+)^{-1}\leq\eps^2/\mu$, $d_\eps=O(\eps^2)$, and $c_\eps\leq3/2$ give \eqref{eq:bounded-semiconjugacy-rate}.
\end{proof}

\begin{remark}
\label{rem:finite-time-character}
Note, that the factor $\e^{LT}$ allows amplification in the retained dynamics.
Uniform comparison for arbitrarily large times requires additional stability to control the persistent forcing of size $O(\eps^2)$.
\end{remark}

\subsection{Post-initial-layer shadowing}
\label{sec:post-layer}

The preceding theorem compares the reduced trajectory with $R_\eps S(t)u_0$.
However, a direct comparison with the original flow $S(t)u_0$ also requires a small reconstruction error, defined in \eqref{eq:post-layer-decomposition}, which can be of order one initially on bounded subsets of $\Hcal$.
This can be overcome by the high-mode decay, due to the parabolic smoothing, which makes this error small after an initial layer.
For $\kappa>0$, set
\begin{equation}
    \tau_\eps^\kappa:=\frac{\kappa}{\mu}\eps^2\abs{\log\eps},
    \qquad 0<\eps<1,
    \label{eq:layer-time}
\end{equation}
so that $\e^{-\Lambda_\eps\tau_\eps^\kappa}=\eps^\kappa$.
For a bounded set $B\subset\Hcal$, write
\begin{equation}
    R_B:=\sup_{u\in B}\norm{u}_{\Hcal}.
    \label{eq:RB}
\end{equation}
Estimate~\eqref{eq:original-high-mode-bound} then yields
\begin{equation}
    \sup_{u_0\in B}\sup_{t\geq\tau_\eps^\kappa}
       \norm{Q_\eps S(t)u_0}_{\Hcal}
    \leq R_B\eps^\kappa+\frac{M}{\lambda_\eps^+}.
    \label{eq:post-layer-tail}
\end{equation}
Put $\tau=\tau_\eps^\kappa$ and $u_\tau=S(\tau)u_0$.
For $t\geq\tau$, the semigroup property gives us the decomposition
\begin{equation}
\begin{aligned}
    S_\eps^{\Mcal}(t-\tau)R_\eps u_\tau-S(t)u_0
    ={}&\bigl[S_\eps^{\Mcal}(t-\tau)R_\eps u_\tau-R_\eps S(t)u_0\bigr]  +\bigl[R_\eps S(t)u_0-S(t)u_0\bigr].
    \label{eq:post-layer-decomposition}
\end{aligned}
\end{equation}
We identify the two terms as the semi-conjugacy defect after restart at time $\tau_\eps^\kappa$ and the reconstruction error. 
This motivates the following definition.

\begin{definition}[Post-initial-layer shadowing defect]
\label{def:post-layer}
For $T>\tau_\eps^\kappa$, we define the post-initial-layer shadowing defect by
\begin{equation}
    \mathfrak{D}_{\eps,T}^{\kappa}(u_0)
    :=\sup_{\tau_\eps^\kappa\leq t\leq T}
       \norm{S_\eps^{\Mcal}(t-\tau_\eps^\kappa)
          R_\eps S(\tau_\eps^\kappa)u_0-S(t)u_0}_{\Hcal}.
    \label{eq:post-layer-defect}
\end{equation}
\end{definition}

\begin{theorem}[Post-initial-layer shadowing]
\label{thm:post-layer}
Under the standing assumptions, let $B\subset\Hcal$ be bounded, $T>0$, and $\kappa>0$.
For all sufficiently small $\eps$ such that $\tau_\eps^\kappa<T$, we have that 
\begin{equation}
    \sup_{u_0\in B} \mathfrak{D}_{\eps,T}^{\kappa}(u_0)
    \leq C_T\bigl(R_B\eps^\kappa+d_\eps\bigr),
    \label{eq:post-layer-estimate}
\end{equation}
where $C_T$ is independent of $\eps$ and $B$.
In particular, the bound is $O(\eps^2)$ for $\kappa\geq2$.
\end{theorem}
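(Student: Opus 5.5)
We bound the two brackets in the decomposition \eqref{eq:post-layer-decomposition} separately, uniformly over $u_0\in B$ and $t\in[\tau,T]$ with $\tau=\tau_\eps^\kappa$.

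\emph{Reconstruction error.} The second bracket is controlled by Proposition~\ref{prop:retraction}(e), which gives
\[
\norm{R_\eps S(t)u_0-S(t)u_0}_{\Hcal}\leq\norm{Q_\eps S(t)u_0}_{\Hcal}+\frac{M}{\alpha_\eps}.
\]
Since $t\geq\tau$, the post-layer tail bound \eqref{eq:post-layer-tail} applies. It follows from \eqref{eq:original-high-mode-bound} together with $\lambda_\eps^+>\Lambda_\eps$ and $\e^{-\Lambda_\eps\tau}=\eps^\kappa$. Hence this term is at most $R_B\eps^\kappa+d_\eps$.

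\emph{Restarted semi-conjugacy defect.} For the first bracket, use the semigroup property $S(t)u_0=S(t-\tau)u_\tau$ with $u_\tau=S(\tau)u_0$. The bracket is then exactly the semi-conjugacy defect at time $t-\tau\in[0,T-\tau]$ with initial datum $u_\tau$, so it is bounded by $\Dcal_{\eps,T}(u_\tau)$. Theorem~\ref{thm:finite-semiconjugacy} gives
\[
\Dcal_{\eps,T}(u_\tau)\leq c_\eps\Bigl[\tfrac{L\e^{LT}}{L+\lambda_\eps^+}\norm{Q_\eps u_\tau}_{\Hcal}+(\e^{LT}-1)d_\eps\Bigr].
\]
The key point is that the restarted datum already has a small tail: applying \eqref{eq:original-high-mode-bound} at time $\tau$ gives $\norm{Q_\eps u_\tau}_{\Hcal}\leq R_B\eps^\kappa+d_\eps$. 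The extra factor $L/(L+\lambda_\eps^+)\leq1$ only helps; it could even be used to gain $\eps^2$, but this is not needed. Using $c_\eps\leq 3/2$, the first bracket is therefore at most $\tfrac32\bigl[\e^{LT}(R_B\eps^\kappa+d_\eps)+(\e^{LT}-1)d_\eps\bigr]$.

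\emph{Conclusion.} Adding the two estimates gives \eqref{eq:post-layer-estimate} with, for example, $C_T=1+3\e^{LT}$. This constant depends only on $L$ and $T$, not on $\eps$ or $B$; the set $B$ enters only through $R_B$. For $\kappa\geq2$ we have $\eps^\kappa\leq\eps^2$, and \eqref{eq:rho} gives $d_\eps\leq2M\eps^2/\mu$, so the bound is $O(\eps^2)$.

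\emph{Main subtlety.} The one step that requires care is the order of the argument. One must not apply Theorem~\ref{thm:finite-semiconjugacy} directly to $u_0$, since $\norm{Q_\eps u_0}$ is $O(1)$ on $B$. Instead, the estimate is restarted at $u_\tau$, so that the tail decay over the layer is used twice: once in the restarted defect and once in the reconstruction error. The bound is also uniform over the window $t\in[\tau,T]$, because $T-\tau\leq T$ and the estimate of Theorem~\ref{thm:finite-semiconjugacy} is monotone in time.
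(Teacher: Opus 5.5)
Your proposal is correct and follows the paper's proof. It uses the same two-term decomposition, restarts the finite-time estimate at $u_\tau=S(\tau)u_0$ (whose tail is already at most $R_B\eps^\kappa+M/\lambda_\eps^+$), and bounds the reconstruction error via Proposition~\ref{prop:retraction}(e) together with \eqref{eq:post-layer-tail}. The only cosmetic difference is that the paper uses $\e^{-\lambda_\eps^+ s}\leq 1$ in \eqref{eq:w-estimate}, giving $c_\eps(\e^{LT}-1)\rho_\eps(u_\tau)$ and $C_T=1+3(\e^{LT}-1)$, whereas you keep the factor $L/(L+\lambda_\eps^+)\leq 1$ from Theorem~\ref{thm:finite-semiconjugacy} and obtain the equally valid $C_T=1+3\e^{LT}$.
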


\begin{proof}
Write $\tau=\tau_\eps^\kappa$ and $u_\tau=S(\tau)u_0$.
Using $\e^{-\lambda_\eps^+s}\leq1$ in \eqref{eq:w-estimate}, applied from initial time $u_\tau$, gives for $\tau\leq t\leq T$
\begin{equation}
    \norm{S_\eps^{\Mcal}(t-\tau)R_\eps u_\tau-R_\eps S(t)u_0}_{\Hcal}
    \leq c_\eps(\e^{LT}-1)\rho_\eps(u_\tau),
    \label{eq:post-layer-first-part}
\end{equation}
where the term $\rho_\eps(u_\tau)$ can be further estimated as
\begin{equation}
    \rho_\eps(u_\tau)
    \leq R_B\eps^\kappa+\frac{2M}{\lambda_\eps^+}+\frac{M}{\alpha_\eps}
    \leq2\bigl(R_B\eps^\kappa+d_\eps\bigr).
    \label{eq:post-layer-rho}
\end{equation}
For the reconstruction error we obtain, using \eqref{eq:retraction-error}, uniformly for $t\geq\tau$,
\begin{equation}
    \norm{R_\eps S(t)u_0-S(t)u_0}_{\Hcal}
    \leq R_B\eps^\kappa+d_\eps.
    \label{eq:post-layer-retraction}
\end{equation}
Combining these bounds in \eqref{eq:post-layer-decomposition} proves the claim with $C_T=1+3(\e^{LT}-1)$, since $c_\eps\leq3/2$.
\end{proof}

\subsection{Uniform-in-time approximate semi-conjugacy}
\label{sec:uniform-semiconjugacy}

In the remainder of this section we present two structural assumptions, that help to control the $e^{LT}$-term of the previous results and thus allow for uniform in-time statements.
We first impose a global contraction condition, and then, a weaker one, where we require contraction only on a reduced region containing the trajectories after a common entrance time.
Both conditions control the accumulation of the small forcing in the low-mode comparison.

\begin{assumption}[Uniform incremental dissipativity]
\label{ass:incremental-dissipativity}
There are $\eps_0\in(0,1]$ and $\sigma>0$ such that, for every $0<\eps\leq\eps_0$, $p_1,p_2\in P_\eps\Hcal$ and $q\in Q_\eps\Hcal$,
\begin{equation}
    \left\langle A(p_1-p_2)
       -P_\eps\bigl(F(p_1+q)-F(p_2+q)\bigr),p_1-p_2
    \right\rangle_{\Hcal}
    \geq\sigma\norm{p_1-p_2}_{\Hcal}^2.
    \label{eq:incremental-dissipativity}
\end{equation}
\end{assumption}

\begin{remark}
The above assumption can be understood as follows.
For each frozen high component $q$, this condition makes the retained vector field contractive with a rate independent of the cutoff.
\end{remark}

\begin{definition}[Uniform-in-time approximate semi-conjugacy]
\label{def:uniform-semiconjugacy}
For $u\in\Hcal$, define
\begin{equation}
    \Dcal_{\eps,\infty}(u)
    :=\sup_{t\geq0}\norm{R_\eps S(t)u-S_\eps^{\Mcal}(t)R_\eps u}_{\Hcal}.
    \label{eq:uniform-semiconjugacy-defect}
\end{equation}
The renormalized inertial dynamics are \emph{uniformly-in-time approximately semi-conjugate} to $S(t)$ on $B\subset\Hcal$ if
\begin{equation}
    \sup_{u\in B}\Dcal_{\eps,\infty}(u)\longrightarrow0
    \qquad\text{as }\eps\to0.
    \label{eq:uniform-semiconjugacy-definition}
\end{equation}
\end{definition}

\begin{theorem}[Uniform-in-time approximate semi-conjugacy]
\label{thm:uniform-semiconjugacy}
In addition to the standing assumptions, suppose that Assumption~\ref{ass:incremental-dissipativity} holds and $0<\eps\leq\eps_0$.
Then, for every $u_0\in\Hcal$,
\begin{equation}
    \Dcal_{\eps,\infty}(u_0)
    \leq c_\eps\left[
       \frac{L}{\lambda_\eps^+}\norm{Q_\eps u_0}_{\Hcal}
       +\frac{L}{\sigma}d_\eps
    \right].
    \label{eq:uniform-semiconjugacy-estimate}
\end{equation}
Consequently, for every bounded $B\subset\Hcal$,
\begin{equation}
    \sup_{u_0\in B}\Dcal_{\eps,\infty}(u_0)\leq C_B\eps^2.
    \label{eq:uniform-bounded-rate}
\end{equation}
\end{theorem}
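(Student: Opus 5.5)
We follow the proof of Theorem~\ref{thm:finite-semiconjugacy}, replacing the crude Gronwall step by an energy estimate that exploits Assumption~\ref{ass:incremental-dissipativity}. As there, write $p(t)=P_\eps S(t)u_0$, $q(t)=Q_\eps S(t)u_0$, $p_\eps(t)=\Phi_\eps(t)P_\eps u_0$ and $w=p-p_\eps$, so that $w(0)=0$ and $w$ solves \eqref{eq:low-mode-difference}. By the two-sided comparison preceding Definition~\ref{def:finite-time-semiconjugacy}, it suffices to show that $\sup_{t\geq0}\norm{w(t)}_{\Hcal}$ is bounded by the bracket in \eqref{eq:uniform-semiconjugacy-estimate}.

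The key step is to split the nonlinear difference so that the dissipativity condition applies to a \emph{common} frozen high component. We write
\[
    F(p+q)-F(p_\eps+h_\eps(p_\eps))
    =\bigl[F(p+q)-F(p_\eps+q)\bigr]+\bigl[F(p_\eps+q)-F(p_\eps+h_\eps(p_\eps))\bigr].
\]
We then test \eqref{eq:low-mode-difference} with $w$. The first bracket, together with $Aw$, is controlled from below by \eqref{eq:incremental-dissipativity} with $p_1=p$, $p_2=p_\eps$ and $q=q(t)\in Q_\eps\Hcal$. The second bracket is bounded via the Lipschitz constant by $L(\norm{q}_{\Hcal}+\norm{h_\eps(p_\eps)}_{\Hcal})$. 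This yields
\[
    \frac{d}{dt}\norm{w}_{\Hcal}\leq-\sigma\norm{w}_{\Hcal}+L\bigl(\norm{q}_{\Hcal}+\norm{h_\eps(p_\eps)}_{\Hcal}\bigr).
\]
To make this rigorous where $w=0$, we work with $\frac12\frac{d}{dt}\norm{w}^2$, which is legitimate since $w$ lives in the finite-dimensional space $P_\eps\Hcal$ and is therefore $C^1$.

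Next we insert the bounds \eqref{eq:original-high-mode-bound} and \eqref{eq:graph-bound-finite-time}, which give forcing of the form $L(\e^{-\lambda_\eps^+s}\norm{Q_\eps u_0}_{\Hcal}+d_\eps)$. Scalar comparison then gives
\[
    \norm{w(t)}_{\Hcal}\leq L\int_0^t\e^{-\sigma(t-s)}\bigl(\e^{-\lambda_\eps^+s}\norm{Q_\eps u_0}_{\Hcal}+d_\eps\bigr)\,\mathrm ds .
\]
For the first term, bound $\e^{-\sigma(t-s)}\leq1$ and integrate $\e^{-\lambda_\eps^+ s}$, which gives $\frac{L}{\lambda_\eps^+}\norm{Q_\eps u_0}_{\Hcal}$. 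For the second term, $\int_0^t\e^{-\sigma(t-s)}\,\mathrm ds\leq\sigma^{-1}$ gives $\frac{L}{\sigma}d_\eps$. Both bounds are uniform in $t\geq0$, so multiplying by $c_\eps$ yields \eqref{eq:uniform-semiconjugacy-estimate}.

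For \eqref{eq:uniform-bounded-rate}, we use $\norm{Q_\eps u_0}_{\Hcal}\leq R_B$, $1/\lambda_\eps^+\leq\eps^2/\mu$, $d_\eps\leq 2M\eps^2/\mu$ and $c_\eps\leq3/2$.

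The only delicate point is the splitting step. Assumption~\ref{ass:incremental-dissipativity} requires the same $q$ in both arguments, so one must freeze the \emph{original} high component $q(t)$ rather than $h_\eps(p_\eps)$. The mismatch between these two is exactly what produces the $O(\eps^2)$ forcing term.
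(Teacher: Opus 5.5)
Your proposal is correct and follows the paper's proof: you split the nonlinear difference at $F(p_\eps+q)$, apply Assumption~\ref{ass:incremental-dissipativity} with the frozen original tail $q(t)$, bound the remainder by the Lipschitz constant, and estimate the two convolution kernels by $1/\lambda_\eps^+$ and $1/\sigma$. Your justification through $\tfrac12\tfrac{d}{dt}\norm{w}_{\Hcal}^2$, using that $w$ is $C^1$ in the finite-dimensional space $P_\eps\Hcal$, is a small piece of rigor that the paper leaves implicit.
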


\begin{proof}
We use the same notation and variables $p,q,p_\eps,w$ from the proof of Theorem~\ref{thm:finite-semiconjugacy}.
The idea now is to split the nonlinear difference in \eqref{eq:low-mode-difference} at $F(p_\eps+q)$.
By Assumption~\ref{ass:incremental-dissipativity} we can control the first difference and Lipschitz continuity controls the second, giving us
\begin{equation}
    \frac{d}{dt}\norm{w}_{\Hcal}
    \leq-\sigma\norm{w}_{\Hcal}
       +L\bigl(\norm{q}_{\Hcal}+\norm{h_\eps(p_\eps)}_{\Hcal}\bigr).
    \label{eq:uniform-scalar-ineq}
\end{equation}
Since $w(0)=0$, estimates \eqref{eq:original-high-mode-bound}--\eqref{eq:graph-bound-finite-time} imply that
\[
    \norm{w(t)}_{\Hcal}
    \leq L\int_0^t\e^{-\sigma(t-s)}
        \bigl(\e^{-\lambda_\eps^+s}\norm{Q_\eps u_0}_{\Hcal}+d_\eps\bigr)\,\mathrm ds.
\]
The two kernels can be estimated by $1/\lambda_\eps^+$ and $1/\sigma$, respectively.
Thus
\begin{equation}
    \norm{w(t)}_{\Hcal}
    \leq\frac{L}{\lambda_\eps^+}\norm{Q_\eps u_0}_{\Hcal}
        +\frac{L}{\sigma}d_\eps.
    \label{eq:uniform-w-estimate}
\end{equation}
Multiplying this estimate by $c_\eps$ and using the spectral bounds we obtain the two assertions.
\end{proof}

\begin{remark}
The global hypothesis has strong consequences.
In \eqref{eq:incremental-dissipativity}, we set $p_1=P_\eps u$, $p_2=P_\eps v$, $q=0$, and let $\eps\to0$ to obtain
\[
    \langle A(u-v)-F(u)+F(v),u-v\rangle_{\Hcal}
    \geq\sigma\norm{u-v}_{\Hcal}^2,
    \qquad u,v\in\dom(A).
\]
Hence both $S(t)$ and $S_\eps(t)$ contract distances by $\e^{-\sigma t}$.
Their time-one maps have unique fixed points, which are equilibria by commutation with the semiflows.
Their global attractors are therefore singletons.
Moreover, boundedness of $F$ forces $\lambda_1\geq\sigma$.
To see this, we test \eqref{eq:incremental-dissipativity} with $p_2=q=0$ and let the amplitude of $p_1$ along a first eigenvector tend to infinity.
Thus this global condition excludes a nontrivial kernel of $A$.
\end{remark}

\begin{remark}
\label{rem:sufficient-incremental-dissipativity}
Two useful sufficient conditions for Assumption~\ref{ass:incremental-dissipativity} are:
\begin{enumerate}[label=\textup{(\roman*)},leftmargin=2.2em]
\item \emph{Differential criterion.}
If $F\in C^1(\Hcal;\Hcal)$ and
\begin{equation}
    \langle A\xi-P_\eps DF(u)\xi,\xi\rangle_{\Hcal}
    \geq\sigma\norm{\xi}_{\Hcal}^2
    \label{eq:differential-contraction}
\end{equation}
for every $u\in\Hcal$, $\xi\in P_\eps\Hcal$ and all sufficiently small $\eps$, with $\sigma>0$ independent of $\eps$, integrate along the segment from $p_2+q$ to $p_1+q$.
\item \emph{Coercivity and a one-sided Lipschitz bound.}
Suppose, uniformly in $\eps$,
\begin{equation}
    \langle A\xi,\xi\rangle_{\Hcal}
    \geq\nu\norm{\xi}_{\Hcal}^2,
    \qquad \xi\in P_\eps\Hcal,
    \label{eq:A-coercive-retained}
\end{equation}
and
\begin{equation}
    \left\langle P_\eps\bigl(F(p_1+q)-F(p_2+q)\bigr),p_1-p_2\right\rangle_{\Hcal}
    \leq\ell\norm{p_1-p_2}_{\Hcal}^2
    \label{eq:F-one-sided-retained}
\end{equation}
for every $p_1,p_2\in P_\eps\Hcal$, $q\in Q_\eps\Hcal$, with $\nu>\ell$ and $\nu>0$.
Then one may take $\sigma=\nu-\ell$; in particular, $\lambda_1>L$ suffices.
\end{enumerate}
\end{remark}

Next, we want to weaken the above assumption to allow for richer dynamics.
The idea is apply Assumption \ref{ass:incremental-dissipativity} locally near an attracting state.
Therefore, it suffices to control the reduced vector field only on the region occupied by the two low-mode trajectories after a common entrance time, which we denote by $t_*$.

\begin{assumption}[Incremental stability on a reduced region]
\label{ass:local-reduced-contraction}
Let $V_\eps\subset P_\eps\Hcal$.
There is $\sigma>0$, independent of sufficiently small $\eps$, such that
\begin{equation}
    \left\langle A(p_1-p_2)
      -P_\eps\bigl(F(p_1+h_\eps(p_1))-F(p_2+h_\eps(p_2))\bigr),p_1-p_2
    \right\rangle_{\Hcal}
    \geq\sigma\norm{p_1-p_2}_{\Hcal}^2
    \label{eq:local-reduced-contraction}
\end{equation}
for all $p_1,p_2\in V_\eps$.
\end{assumption}

\begin{theorem}[Uniform comparison after entrance into a stable region]
\label{thm:stable-region-semiconjugacy}
Under the standing assumptions, let $B\subset\Hcal$ be bounded.
Suppose that Assumption~\ref{ass:local-reduced-contraction} holds and there is $t_*\geq0$, independent of sufficiently small $\eps$, such that
\[
    P_\eps S(t)u_0\in V_\eps,
    \qquad \Phi_\eps(t)P_\eps u_0\in V_\eps,
    \qquad u_0\in B,\quad \text{for all}~ t\geq t_*.
\]
Then
\begin{equation}
    \sup_{u_0\in B}\Dcal_{\eps,\infty}(u_0)\leq C_B\eps^2.
    \label{eq:stable-region-uniform-result}
\end{equation}
\end{theorem}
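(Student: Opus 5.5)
The plan is to split the time axis at the common entrance time $t_*$. On $[0,t_*]$ I will use the finite-time estimate; on $[t_*,\infty)$ I will use a contraction argument in the spirit of Theorem~\ref{thm:uniform-semiconjugacy}, with Assumption~\ref{ass:local-reduced-contraction} replacing the global hypothesis.

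\textbf{Step 1: the interval $[0,t_*]$.} As before, set $p(t)=P_\eps S(t)u_0$, $q(t)=Q_\eps S(t)u_0$, $p_\eps(t)=\Phi_\eps(t)P_\eps u_0$ and $w=p-p_\eps$. Since $t_*$ is independent of $\eps$, Theorem~\ref{thm:finite-semiconjugacy} with $T=t_*$ gives
\[
\sup_{0\le t\le t_*}\norm{w(t)}_{\Hcal}\le C_{B,t_*}\eps^2
\]
uniformly for $u_0\in B$. In particular $\norm{w(t_*)}_{\Hcal}\le C_{B,t_*}\eps^2$.

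\textbf{Step 2: the interval $[t_*,\infty)$.} Here I split the nonlinear difference in \eqref{eq:low-mode-difference} at the point $p+h_\eps(p)$:
\[
F(p+q)-F(p_\eps+h_\eps(p_\eps))
=\bigl[F(p+h_\eps(p))-F(p_\eps+h_\eps(p_\eps))\bigr]+\bigl[F(p+q)-F(p+h_\eps(p))\bigr].
\]
For $t\ge t_*$ both $p(t)$ and $p_\eps(t)$ lie in $V_\eps$ by hypothesis. Testing with $w$, the first bracket together with $Aw$ is controlled by \eqref{eq:local-reduced-contraction}. The second bracket is bounded by $L\bigl(\norm{q}_{\Hcal}+\norm{h_\eps(p)}_{\Hcal}\bigr)$. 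This yields
\[
\tfrac{d}{dt}\norm{w}_{\Hcal}\le-\sigma\norm{w}_{\Hcal}+L\bigl(\norm{q}_{\Hcal}+\norm{h_\eps(p)}_{\Hcal}\bigr).
\]
By \eqref{eq:original-high-mode-bound}, $\norm{q(t)}_{\Hcal}\le \e^{-\lambda_\eps^+t}R_B+M/\lambda_\eps^+$, and by \eqref{eq:graph-sup-bound}, $\norm{h_\eps(p)}_{\Hcal}\le M/\alpha_\eps$. Scalar comparison from $t_*$ then gives, for $t\ge t_*$,
\[
\norm{w(t)}_{\Hcal}\le \e^{-\sigma(t-t_*)}\norm{w(t_*)}_{\Hcal}+\frac{L}{\lambda_\eps^+}R_B+\frac{L}{\sigma}d_\eps .
\]
Each term is $O(\eps^2)$ uniformly on $B$, using $1/\lambda_\eps^+\le\eps^2/\mu$ and $d_\eps\le 2M\eps^2/\mu$.

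\textbf{Step 3: conclusion.} The estimate $\norm{R_\eps S(t)u-S_\eps^{\Mcal}(t)R_\eps u}_{\Hcal}\le c_\eps\norm{w(t;u)}_{\Hcal}$, derived before Definition~\ref{def:finite-time-semiconjugacy}, converts the bounds of Steps 1 and 2 into \eqref{eq:stable-region-uniform-result}. We also use $c_\eps\le 3/2$.

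\textbf{Main obstacle.} The delicate step is the choice of splitting point. Splitting at $F(p_\eps+q)$, as in Theorem~\ref{thm:uniform-semiconjugacy}, would require contraction with frozen $q$, and Assumption~\ref{ass:local-reduced-contraction} does not provide that. One must instead split at the graph point $p+h_\eps(p)$, so that the hypothesis applies with both arguments in $V_\eps$. The price is that the remainder now involves $q-h_\eps(p)$ rather than $q-h_\eps(p_\eps)$; this is harmless because both $q$ and $h_\eps(p)$ are separately $O(\eps^2)$ after the initial decay. A second point needing care is making the differential inequality for $\norm{w}_{\Hcal}$ rigorous: $w$ evolves in the finite-dimensional space $P_\eps\Hcal$, so one works with $\tfrac12\tfrac{d}{dt}\norm{w}_{\Hcal}^2$ and handles $w=0$ in the usual way.
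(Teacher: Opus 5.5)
Your proposal is correct and follows the paper's proof essentially step for step. Like the paper, you use Theorem~\ref{thm:finite-semiconjugacy} on $[0,t_*]$, split at $F(p+h_\eps(p))$ so that Assumption~\ref{ass:local-reduced-contraction} applies to the two graph points, integrate the scalar inequality from $t_*$, and pass back to the defect via $c_\eps$. The only difference is cosmetic: you bound $\norm{Q_\eps u_0}_{\Hcal}$ by $R_B$ and drop the harmless factor $\e^{-\lambda_\eps^+t_*}$ that the paper retains.
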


\begin{proof}
We use again the notation of Theorem~\ref{thm:finite-semiconjugacy}, which gives $\norm{w(t_*)}_{\Hcal}\leq C_{B,t_*}\eps^2$ uniformly on $B$ (and $w(0)=0$ if $t_*=0$).
For $t\geq t_*$, we split the nonlinear difference at $F(p+h_\eps(p))$.
From Assumption~\ref{ass:local-reduced-contraction} we thgen obtain
\[
    \frac{d}{dt}\norm{w}_{\Hcal}
    \leq-\sigma\norm{w}_{\Hcal}
       +L\bigl(\norm{q}_{\Hcal}+\norm{h_\eps(p)}_{\Hcal}\bigr).
\]
Using the high-mode and graph bounds as in Theorem~\ref{thm:uniform-semiconjugacy}, but integrating now from $t_*$, yields
\[
    \norm{w(t)}_{\Hcal}
    \leq\e^{-\sigma(t-t_*)}\norm{w(t_*)}_{\Hcal}
       +\frac{L\e^{-\lambda_\eps^+t_*}}{\lambda_\eps^+}
          \norm{Q_\eps u_0}_{\Hcal}
       +\frac{L}{\sigma}d_\eps
    \leq C_B\eps^2.
\]
The bound for $J_\eps$ proves the uniform estimate for $t\geq t_*$, whereas Theorem~\ref{thm:finite-semiconjugacy} controls the interval $[0,t_*]$.
\end{proof}

The above result has a particularly interesting application.
\begin{remark}
\label{rmk:basin-of-attraction}
Let $u_*$ be an asymptotically stable equilibrium of the original equation, with basin of attraction
\[
    \Bcal(u_*):=\{u\in\Hcal:S(t)u\to u_*\text{ as }t\to\infty\},
\]
and let $K\subset\Bcal(u_*)$ be compact.
If the sets $V_\eps$, satisfying Assumption~\ref{ass:local-reduced-contraction}, contain both $P_\eps S(t)K$ and $\Phi_\eps(t)P_\eps K$ for every $t\geq t_K$, with $t_K$ independent of sufficiently small $\eps$, then
\begin{equation}
    \sup_{u_0\in K}\sup_{t\geq0}
       \norm{R_\eps S(t)u_0-S_\eps^{\Mcal}(t)R_\eps u_0}_{\Hcal}
    \leq C_K\eps^2.
    \label{eq:basinwise-uniform-semiconjugacy}
\end{equation}
Hence, we can apply Theorem~\ref{thm:stable-region-semiconjugacy} with $B=K$ and $t_*=t_K$.
\end{remark}


\section{Three examples for inertial-manifold reduction}
\label{sec:three-benchmarks}

In the previous sections we have introduced the spectral renormalization, showed the existence of an inertial manifold for the renormalized equations and compared its dynamics with the original ones, via their respective semiflows.
The question we answer in this section is rather, \emph{how do our results compare with known existence and non-existence results for inertial manifolds.}

The artificial spectral gap construction is useful only if it behaves consistently in regimes where the original equation is already reducible, and remains still meaningful when such a reduction fails.
We therefore consider three complementary examples.  
On a bounded one-dimensional interval the Laplacian itself provides gaps of precisely the scale introduced by our regularization.  
On the three-dimensional periodic cube the classical spectral-gap condition fails in general, but spatial averaging still yields an inertial manifold for scalar reaction-diffusion equations.  
Finally, Zelik's counterexample shows that, without additional structure, an inertial manifold may genuinely not exist.  

\subsection{The one-dimensional interval}
\label{subsec:one-dimensional-benchmark}

Let \(\Omega=(0,\pi)\), \(\Hcal=L^2(\Omega)\), and
\[
    A=-d\partial_x^2,
    \qquad d>0,
\]
with homogeneous Neumann or Dirichlet boundary conditions.
The distinct eigenvalues are 
\[\lambda_n=d n^2,\quad \text{with}\quad \lambda_{n+1}-\lambda_n= d(2n+1),\]
where \(n\in\mathbb N_0\) in the Neumann case and \(n\in\mathbb N\) in the Dirichlet case.

\begin{remark}
Note that for the one-dimensional Laplacian, eigenvalues of order $O(n^2)$ have consecutive spectral gaps of order $O(n)$. 
Hence, identifying the cutoff scale $n^2\sim \eps^{-2}$ gives a natural gap scale $n\sim \eps^{-1}$. 
This is precisely the scaling used for the artificial spectral gap introduced in Section \ref{sec:functional-setting}, and provides our primary motivation for that choice.
\end{remark}

Now, we use the spectral cutoff from Section \ref{sec:functional-setting}, setting $\Lambda_\eps=\mu\eps^{-2}$, and letting \(N_\eps\) be the largest index with \(\lambda_{N_\eps} \leq\Lambda_\eps\).
Then,
\begin{equation}
    N_\eps=\sqrt{\frac{\mu}{d}}\,\eps^{-1}+O(1),
    \qquad
    \lambda_{N_\eps+1}-\lambda_{N_\eps}
    =2\sqrt{d\mu}\,\eps^{-1}+O(1).
    \label{eq:1d-natural-gap}
\end{equation}
Thus the natural gap at eigenvalues of size \(O(\eps^{-2})\) is already of size \(O(\eps^{-1})\), exactly the scaling of the artificial jump.

\begin{proposition}
\label{prop:1d-two-inertial-manifolds}
Assume Assumption \ref{ass:nonlinearity}.  
For all sufficiently small \(\eps>0\), both the original equation
\[
    \dot v+Av=F(v)
\]
and the renormalized equation
\[
    \dot v^\eps+(A+\eps^{-1}Q_\eps)v^\eps=F(v^\eps)
\]
possess Lipschitz inertial manifolds over the same space \(P_\eps\Hcal\), denoted by
\[
    \Mcal_\eps^0
    =\{p+h_\eps^0(p):p\in P_\eps\Hcal\},
    \qquad
    \Mcal_\eps^1
    =\{p+h_\eps^1(p):p\in P_\eps\Hcal\}.
\]
Moreover,
\begin{equation}
    \norm{h_\eps^j}_{L^\infty(P_\eps\Hcal;\Hcal)}
    \leq C\eps^2,
    \qquad
    \Lip(h_\eps^j)\leq C\eps,
    \qquad j\in\{0,1\},
    \label{eq:1d-graph-bounds}
\end{equation}
and consequently
\begin{equation}
    \sup_{p\in P_\eps\Hcal}
    \norm{h_\eps^1(p)-h_\eps^0(p)}_{\Hcal}
    \leq C\eps^2.
    \label{eq:1d-graph-comparison}
\end{equation}
The reduced vector fields $X_\eps^j(p)=-Ap+P_\eps F(p+h_\eps^j(p))$ satisfy
\[\sup_{p\in P_\eps\Hcal}\norm{X_\eps^1(p)-X_\eps^0(p)}_{\Hcal}\leq C\eps^2.\]
\end{proposition}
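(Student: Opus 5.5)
We will obtain both manifolds from the same classical spectral-gap theorem already used in Theorem~\ref{thm:inertial-manifold}, applied with the same splitting $\Hcal=P_\eps\Hcal\oplus Q_\eps\Hcal$.

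\textbf{The renormalized equation.} For $j=1$ there is nothing new to prove. Theorem~\ref{thm:inertial-manifold} gives $\Mcal_\eps^1$ together with
\[
\norm{h_\eps^1}_{L^\infty}\leq M/\alpha_\eps\leq (M/\mu)\eps^2,
\qquad
\Lip(h_\eps^1)\leq 4L/\gamma_\eps\leq 4L\eps .
\]

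\textbf{The original equation.} For $j=0$ we apply the same classical result (\cite[Chapter~VIII, Theorem~3.1]{Temam1997}, \cite[Theorem~2.1]{Zelik2014}) to $A$ itself. The relevant gap is now the natural one, $\gamma_\eps^0:=\lambda_\eps^+-\beta_\eps=\lambda_{N_\eps+1}-\lambda_{N_\eps}$.

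The only point to check is that $P_\eps$ is exactly the projection onto the modes with index at most $N_\eps$. This holds because $P_\eps=\mathbf 1_{[0,\Lambda_\eps]}(A)$ and $N_\eps$ is defined as the largest index with $\lambda_{N_\eps}\le\Lambda_\eps$. Hence $\beta_\eps=dN_\eps^2$ and $\lambda_\eps^+=d(N_\eps+1)^2$.

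By \eqref{eq:1d-natural-gap}, $\gamma_\eps^0=d(2N_\eps+1)\geq 2\sqrt{d\mu}\,\eps^{-1}-C$. This exceeds $8L$ for small $\eps$, so the theorem yields $\Mcal_\eps^0$ over $P_\eps\Hcal$ with $\Lip(h_\eps^0)\le 4L/\gamma_\eps^0\le C\eps$.

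For the sup bound we use the Lyapunov--Perron representation of the graph map,
\[
h_\eps^0(p)=\int_{-\infty}^0 \e^{sA}Q_\eps F(u(s))\,\mathrm ds ,
\]
where $u$ is the backward trajectory on the manifold through $p$. Combining \eqref{eq:original-high-decay} with $\norm{F}\le M$ gives
\[
\norm{h_\eps^0}_{L^\infty}\le M/\lambda_\eps^+\le (M/\mu)\eps^2 .
\]
The same bound also follows from invariance, exactly as in the proof of \eqref{eq:original-tail-dynamic}: an invariant graph point equals its own high-mode tail at all times, and that tail is bounded by $M/\lambda_\eps^+$ after letting the initial tail decay.

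This step is the main obstacle, because the cited theorems are often stated only with a Lipschitz bound. If so, I would argue directly. Take an invariant point $p+h_\eps^0(p)$ and write it as $S(t)u_{-t}$, where $u_{-t}\in\Mcal_\eps^0$ lies on a backward orbit; this orbit exists because the reduced ODE on the finite-dimensional space is globally solvable. Then apply \eqref{eq:original-tail-dynamic} and let $t\to\infty$. This needs the backward tails $\norm{h_\eps^0(p(-t))}$ to stay bounded, which the Lyapunov--Perron fixed point in a bounded function class provides.

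\textbf{Comparison of graphs and vector fields.} Once \eqref{eq:1d-graph-bounds} is established, \eqref{eq:1d-graph-comparison} is the triangle inequality:
\[
\norm{h^1_\eps-h^0_\eps}_{L^\infty}\le \norm{h^1_\eps}_{L^\infty}+\norm{h^0_\eps}_{L^\infty}\le 2(M/\mu)\eps^2 .
\]
For the reduced vector fields the terms $-Ap$ cancel. Using $\norm{P_\eps}\le1$ and the Lipschitz continuity \eqref{eq:F-bounded-Lipschitz} of $F$,
\[
\norm{X^1_\eps(p)-X^0_\eps(p)}_{\Hcal}\le L\norm{h^1_\eps(p)-h^0_\eps(p)}_{\Hcal}\le C\eps^2
\]
uniformly in $p$.
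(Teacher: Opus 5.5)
Your proposal is correct and follows essentially the same route as the paper. The paper obtains $h_\eps^0$ from the classical spectral-gap theorem, using the natural gap $\lambda_{N_\eps+1}-\lambda_{N_\eps}\sim 2\sqrt{d\mu}\,\eps^{-1}$ and the Lyapunov--Perron bounds $M/\lambda_{N_\eps+1}$ and $4L/(\lambda_{N_\eps+1}-\lambda_{N_\eps})$; it takes $h_\eps^1$ from Theorem~\ref{thm:inertial-manifold}, and it closes with the triangle inequality and the Lipschitz bound on $F$. Your extra justification of the sup bound, via the integral representation or the backward-orbit and invariance argument, just spells out what the paper calls the standard Lyapunov--Perron estimate.
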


\begin{proof}
For the original equation, \eqref{eq:1d-natural-gap} implies the classical spectral-gap condition for all sufficiently small \(\eps\); see, for example, \cite{FoiasSellTemam1988,ConstantinEtAl1989}.  
The standard Lyapunov-Perron estimates yield
\[
    \norm{h_\eps^0}_\infty
    \leq \frac{M}{\lambda_{N_\eps+1}}
    =O(\eps^2),
    \qquad
    \Lip(h_\eps^0)
    \leq \frac{4L}{\lambda_{N_\eps+1}-\lambda_{N_\eps}}
    =O(\eps).
\]
For the renormalized equation the interface gap is increased by \(\eps^{-1}\), while the first discarded eigenvalue remains of order \(\eps^{-2}\).  
Hence Theorem \ref{thm:inertial-manifold} yields the same orders for \(h_\eps^1\).
Estimate \eqref{eq:1d-graph-comparison} follows by the triangle inequality.  
Since the low-mode linear part is identical for both reductions,
\[
    X_\eps^j(p)=-Ap+P_\eps F\bigl(p+h_\eps^j(p)\bigr),
\]
the Lipschitz continuity of \(F\) gives
\[
    \sup_p\norm{X_\eps^1(p)-X_\eps^0(p)}
    \leq L\sup_p\norm{h_\eps^1(p)-h_\eps^0(p)}
    =O(\eps^2).
\]
\end{proof}

Thus we can conclude that in this example the artificial shift does not create a new mechanism; it only strengthens a spectral splitting already present in the underlying PDE. 

\begin{remark}
The same observation also removes the need for an artificial gap in the slow-manifold construction of Section \ref{sec:fast-slow}.  
Indeed, for the unmodified slow operator the relevant stable/slow separation is
\begin{equation}
    \min\left\{\omega_f\del^{-1},\lambda_{N_\eps+1}\right\}
    -\lambda_{N_\eps}.
    \label{eq:1d-slow-gap}
\end{equation}
If \(\del\leq\vartheta\eps^2\) with \(\vartheta<\omega_f/\mu\), then the fast relaxation rate \(\omega_f\del^{-1}\) lies above the retained slow spectrum by an amount of order \(\eps^{-2}\), while the natural separation in the slow spectrum is of order \(\eps^{-1}\) by \eqref{eq:1d-natural-gap}.  
Hence the Lyapunov-Perron gap condition used in Theorem \ref{thm:two-parameter-manifold} holds for the original fast-slow problem.  
Thus both an inertial manifold for the slow equation and the corresponding slow manifold for the fast-slow system exist intrinsically.  
\end{remark}

\subsection{The three-dimensional periodic cube}
\label{subsec:three-dimensional-spatial-averaging}

In the second example we consider a scalar reaction-diffusion equation on the three-dimensional periodic cube $\Omega=(-\pi,\pi)^3$, where we set $\Hcal=L^2_{\rm per}(\Omega)$ and $ A=I-\Delta$. 
The eigenfunctions of $A$ are the Fourier modes $e^{ik\cdot x}$, $k\in\mathbb Z^3$, with eigenvalues $\lambda_k=1+|k|^2$.

In contrast to the one-dimensional situation of the previous example, the consecutive distinct spectral levels do not develop gaps tending to infinity.  
Thus increasing the cutoff cannot guarantee a gap exceeding an arbitrary fixed nonlinear Lipschitz threshold.

Nevertheless, the scalar reaction-diffusion problem provides an important positive result beyond the classical inertial manifold theorem.
Mallet-Paret and Sell showed that, on suitable higher-dimensional domains including the three-dimensional cube, the missing adjacent spectral gap can be replaced by a \emph{spatial-averaging property}; see \cite{MalletParetSell1988}.  
A modern formulation and extensions of this construction are given in \cite{KostiankoEtAl2022}.
We briefly recall the mechanism relevant for the comparison below.

Consider the scalar equation
\begin{equation}
    \partial_tu+Au=F(u),
    \label{eq:SA-original}
\end{equation}
where $F(u)(x)=f(u(x))$ with $f$ smooth and $f'$ bounded.
Its directional derivative is
\[
    F'(u)v=f'(u(\cdot))v.
\]
The main idea of the spatial averaging is based on the fact that, on suitable bands of Fourier modes near a prescribed spectral cutoff, this multiplication operator acts almost like multiplication by a spatially constant coefficient.
More precisely, for any $\eta>0$, one can choose arbitrarily large cutoffs and corresponding band projections $\Rcal_N$ such that
\begin{equation}
    \sup_{u\in\Bcal}
    \norm{\Rcal_N F'(u)\Rcal_N-a(u)\Rcal_N}_{\mathcal L(\Hcal)}
    \leq\eta,
    \label{eq:SA-averaging-principle}
\end{equation}
where $\Bcal$ is a sufficiently regular absorbing set and $a(u)$ is the spatial mean of $f'(u)$.
The scalar term $a(u)$ shifts the growth or decay rates of all modes in the band equally, so it does not affect their relative separation.
Thus the part of the linearized reaction acting on the critical shell is asymptotically close to a scalar multiple of the identity.  
This scalar contribution can be absorbed into the linear part, and the remaining oscillatory component is sufficiently small to recover the cone and squeezing estimates required for an inertial manifold.  
In this way spatial averaging replaces a large adjacent spectral gap by additional information about the interaction between the Fourier spectrum and the multiplication operator $F'(u)$.

The preparation used in this construction matters for comparison.
Spatial averaging can produce a Lipschitz graph
\begin{equation}
    \Mcal_j^{\rm SA}
    =\{p+h_j^{\rm SA}(p):p\in P_j\Hcal\}
    \label{eq:SA-manifold}
\end{equation}
containing the original attractor $\Acal$, while global invariance of the graph holds only for a further prepared equation; the preparation may depend on $j$ \cite[Section~5]{KostiankoEtAl2022}.
It therefore gives an exact reduction of the original dynamics on $\Acal$, but does not automatically give a globally invariant graph for the same fixed $F$ in \eqref{eq:SA-original}.

We now apply the spectral renormalization approach to the above scalar equation.
We observe that both constructions use a spectral cutoff to separate retained and discarded modes.
However, spatial averaging exploits the structure of the linearized reaction near the cutoff, whereas our renormalization adds damping above it to create a spectral gap.
To compare the resulting reductions over the same retained subspace, choose $\Lambda_j\to\infty$ strictly between the last retained and first discarded spectral levels associated with $P_j$, and set
\begin{equation}
    \eps_j=\sqrt{\frac{\mu}{\Lambda_j}},
    \qquad P_{\eps_j}=P_j.
    \label{eq:SA-matched-epsilon}
\end{equation}
For a fixed $F$ satisfying Assumption~\ref{ass:nonlinearity}, Theorem~\ref{thm:inertial-manifold} then gives the exact invariant graph
\begin{equation}
    \Mcal_j^{\rm reg}
    =\{p+h_j^{\rm reg}(p):p\in P_j\Hcal\}
    \label{eq:SA-renormalized-manifold}
\end{equation}
of the modified full equation for all sufficiently large $j$.

\begin{proposition}
\label{prop:SA-calibration}
Assume Assumption~\ref{ass:nonlinearity}, and suppose \eqref{eq:SA-original} has a compact global attractor $\Acal\subset\Mcal_j^{\rm SA}$ at the interfaces above, with each $h_j^{\rm SA}$ Lipschitz.
Set $K_j=P_j\Acal$.
For all sufficiently large $j$,
\begin{equation}
    \sup_{p\in K_j}\norm{h_j^{\rm SA}(p)}_{\Hcal}
    +\norm{h_j^{\rm reg}}_\infty
    \leq\frac{2M}{\Lambda_j}
    =\frac{2M}{\mu}\eps_j^2.
    \label{eq:SA-graph-bounds}
\end{equation}
The fibre-preserving map
\begin{equation}
    \Theta_j(p+h_j^{\rm SA}(p)):=p+h_j^{\rm reg}(p)
    \label{eq:SA-fibre-map}
\end{equation}
satisfies
\begin{equation}
    \sup_{u\in\Acal}\norm{\Theta_j u-u}_{\Hcal}
    \leq C\eps_j^2.
    \label{eq:SA-manifold-distance}
\end{equation}
For $X_j^\ell(p):=-Ap+P_jF(p+h_j^\ell(p))$, $\ell\in\{{\rm SA},{\rm reg}\}$,
\begin{equation}
    \sup_{p\in K_j}\norm{X_j^{\rm SA}(p)-X_j^{\rm reg}(p)}_{\Hcal}
    \leq C\eps_j^2.
    \label{eq:SA-vector-field-distance}
\end{equation}
For $p_0\in K_j$, the original trajectory through $p_0+h_j^{\rm SA}(p_0)$ and the renormalized graph trajectory through $p_0+h_j^{\rm reg}(p_0)$ are $O(\eps_j^2)$-close on every fixed time interval, uniformly in $p_0$ and $j$.
\end{proposition}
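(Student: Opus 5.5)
The first step is the graph bounds \eqref{eq:SA-graph-bounds}, treated separately for the two graphs.

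For $h_j^{\rm reg}$, Theorem~\ref{thm:inertial-manifold} gives $\norm{h_j^{\rm reg}}_\infty\le M/\alpha_{\eps_j}\le M/\Lambda_j$ directly, once $j$ is large enough that $\gamma_{\eps_j}\ge 8L$.

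For $h_j^{\rm SA}$ on $K_j$ I will use only that $\Acal$ is invariant under $S(t)$ and lies on the graph. For $u\in\Acal$ there is a complete bounded orbit $u(t)\in\Acal$, $t\in\R$, with $u(0)=u$. Applying $Q_j$ to the variation-of-constants formula on $[s,0]$ and letting $s\to-\infty$, \eqref{eq:original-high-decay} together with boundedness of $\Acal$ gives
\[
Q_ju=\int_{-\infty}^0\e^{sA}Q_jF(u(s))\,\mathrm ds ,
\qquad
\norm{Q_ju}_{\Hcal}\le M/\lambda_j^+\le M/\Lambda_j .
\]
Since $u=p+h_j^{\rm SA}(p)$ with $p=P_ju$, we have $Q_ju=h_j^{\rm SA}(p)$, which yields the first summand. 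Adding the two bounds gives $2M/\Lambda_j=\tfrac{2M}{\mu}\eps_j^2$ by \eqref{eq:SA-matched-epsilon}.

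Next, for $u=p+h_j^{\rm SA}(p)\in\Acal$ we have $p\in K_j$ and $\Theta_ju-u=h_j^{\rm reg}(p)-h_j^{\rm SA}(p)$. The triangle inequality and \eqref{eq:SA-graph-bounds} then give \eqref{eq:SA-manifold-distance} with $C=2M/\mu$. The vector-field estimate \eqref{eq:SA-vector-field-distance} follows the same way: the linear parts $-Ap$ cancel, $\norm{P_j}\le1$, and $F$ is $L$-Lipschitz, so the difference is at most $L\norm{h_j^{\rm SA}(p)-h_j^{\rm reg}(p)}\le 2LM\eps_j^2/\mu$.

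For the trajectory statement, take $p_0\in K_j$ and set $u_0=p_0+h_j^{\rm SA}(p_0)\in\Acal$. Then $P_ju_0=p_0$, so $R_{\eps_j}u_0=p_0+h_j^{\rm reg}(p_0)$ is precisely the renormalized graph point in the statement. Theorem~\ref{thm:finite-semiconjugacy} bounds $\norm{R_{\eps_j}S(t)u_0-S^{\Mcal}_{\eps_j}(t)R_{\eps_j}u_0}$ on $[0,T]$ by $c_{\eps_j}\bigl[\tfrac{L\e^{LT}}{\lambda_j^+}\norm{Q_ju_0}+\e^{LT}d_{\eps_j}\bigr]$. Here $\norm{Q_ju_0}\le M/\Lambda_j$, so the whole bound is $O(\eps_j^2)$ with a constant depending only on $T,L,M,\mu$. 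It remains to replace $R_{\eps_j}S(t)u_0$ by $S(t)u_0$. Because $S(t)u_0\in\Acal$, estimate \eqref{eq:retraction-error} combined with the tail bound above gives $\norm{R_{\eps_j}S(t)u_0-S(t)u_0}\le M/\lambda_j^++M/\alpha_{\eps_j}=O(\eps_j^2)$ uniformly in $t$. The triangle inequality completes the argument.

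The main obstacle is the bound on $h_j^{\rm SA}$. The spatial-averaging construction gives no sup bound of the form $M/\Lambda_j$ for a graph built for a prepared nonlinearity. My plan avoids this by restricting to $K_j$ and using the backward-integral representation on the attractor, which needs only invariance of $\Acal$ under the original flow and never the construction of $h_j^{\rm SA}$. The remaining care is to make sure all constants depend only on $M,L,\mu,T$, so that the bounds are uniform in $j$.
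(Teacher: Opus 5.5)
Your proposal is correct and follows the paper's proof essentially step by step. The backward variation-of-constants bound on the strictly invariant attractor controls $h_j^{\rm SA}$ on $K_j$, the inertial-manifold bound $M/\alpha_{\eps_j}$ controls $h_j^{\rm reg}$, and the triangle inequality with the Lipschitz bound for $F$ gives the estimates for $\Theta_j$ and the vector fields. The only difference is cosmetic and concerns the trajectory claim: you invoke Theorem~\ref{thm:finite-semiconjugacy} together with \eqref{eq:retraction-error}, whereas the paper runs the same Gronwall estimate directly on the retained coordinates, using that $P_jS(t)u_0$ solves $\dot p=X_j^{\rm SA}(p)$; since $Q_jS(t)u_0=h_j^{\rm SA}(P_jS(t)u_0)$ on $\Acal$, the two computations coincide.
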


\begin{proof}
Strict invariance and boundedness of $\Acal$, applied to the high-mode variation-of-constants formula from time $-T$ to $0$ and then letting $T\to\infty$, give
$\sup_{a\in\Acal}\norm{Q_{\eps_j}a}\leq M/\lambda_{\eps_j}^+$.
Since $\Acal$ lies on the graph, its high components are precisely $h_j^{\rm SA}(K_j)$.
Together with $\norm{h_j^{\rm reg}}_\infty\leq M/\alpha_{\eps_j}$, this proves \eqref{eq:SA-graph-bounds}.
The bounds for $\Theta_j$ and the vector fields follow from the triangle inequality and the Lipschitz bound for $F$.

The original trajectory stays in $\Acal$, so its retained coordinate $p^{\rm SA}(t)$ stays in $K_j$ and solves $\dot p^{\rm SA}=X_j^{\rm SA}(p^{\rm SA})$.
For the difference $w=p^{\rm SA}-p^{\rm reg}$, nonnegativity of $A$ gives, almost everywhere,
\[
    \frac{d}{dt}\norm{w}_{\Hcal}
    \leq L\norm{w}_{\Hcal}
       +L\left(\sup_{K_j}\norm{h_j^{\rm SA}}+\norm{h_j^{\rm reg}}_\infty\right).
\]
Since $w(0)=0$, Gronwall's inequality and \eqref{eq:SA-graph-bounds} control the retained coordinates and then the reconstructed trajectories.
\end{proof}

\begin{remark}
    Note that, if $\Mcal_j^{\rm SA}$ is globally invariant for the same fixed $F$, these estimates extend to all $p\in P_j\Hcal$ and all initial points on the graph.
\end{remark}

We conclude this example by two observations.\\
First, spatial averaging is the stronger result whenever it applies: it produces exact reduced dynamics on the original attractor.
The artificial gap should therefore not be viewed as an improvement of the spatial-averaging theorem.  
Instead, Proposition \ref{prop:SA-calibration} shows that, at increasingly fine matched interfaces, the two finite-dimensional descriptions become asymptotically indistinguishable on bounded time intervals.

Second, the two constructions use fundamentally different information.  
Spatial averaging exploits the scalar multiplication structure of $F'(u)$ and arithmetic properties of suitable spectral shells.  
The artificial construction modifies only the linear high-frequency tail and therefore does not depend on the derivative being scalar.  
For a genuinely coupled reaction-diffusion system, as explored in Section \ref{sec:coupled-AC-benchmark}, $F'(u)$ is a matrix-valued multiplication operator and its spatial average is in general a matrix rather than a scalar multiple of the identity.  
The scalar spatial-averaging mechanism does not directly close in this situation; see the discussion in \cite{KostiankoEtAl2022}.  
By contrast, the artificial spectral shift is unchanged for scalar and vector-valued equations.

\subsection{The Zelik counterexample}
\label{subsec:zelik-counterexample}

The preceding two examples concern situations in which the original equation does possess an inertial manifold, either because a large spectral gap is present intrinsically or because additional scalar structure replaces it.  
In this example we show that neither mechanism can be expected in general.  
In particular, the non-existence of an inertial manifold may reflect a genuine dynamical obstruction rather than merely the failure of a known existence criterion.

Let $A$ be positive and self-adjoint with compact inverse on a separable Hilbert space $\Hcal$, with eigenvalues
\[
    0<\lambda_1\leq\lambda_2\leq\cdots,
    \qquad
    \lambda_n\to\infty,
\]
and assume that the consecutive gaps are uniformly bounded, i.e.,
\begin{equation}
    L_0
    :=
    \sup_{n\in\mathbb N}
    (\lambda_{n+1}-\lambda_n)
    <\infty.
    \label{eq:zelik-bounded-gap-short}
\end{equation}
In this setting Zelik showed that, for every
\begin{equation}
    L>
    \max
    \left\{
       \frac{L_0}{2},
       \lambda_2
    \right\},
    \label{eq:zelik-L-condition-short}
\end{equation}
one can construct a smooth globally Lipschitz nonlinearity $F_Z$ such that
\begin{equation}
    \partial_tu+Au=F_Z(u)
    \label{eq:zelik-equation-short}
\end{equation}
has a compact global attractor $\Acal_Z$ containing two distinct complete trajectories $u_1,u_2$ for which
\begin{equation}
    \norm{u_1(t)-u_2(t)}_{\Hcal}
    \leq
    C\e^{-\kappa t^2},
    \qquad
    t\geq0;
    \label{eq:zelik-superexponential-short}
\end{equation}
see \cite[Theorem 4.4]{Zelik2014}.

\begin{remark}
Let us briefly explain why the estimate \eqref{eq:zelik-superexponential-short} is significant.
Suppose that the dynamics on $\Acal_Z$ admitted a finite-dimensional Lipschitz inertial form
\[
    \dot x=X(x)
\]
and that the identification of the attractor with the finite-dimensional invariant set were bi-Lipschitz.  
If $x_1,x_2$ are two distinct complete solutions of this ODE and $L_X$ is the Lipschitz constant of $X$, then Gronwall's inequality applied backwards along the complete trajectories gives
\begin{equation}
    |x_1(t)-x_2(t)|
    \geq
    \e^{-L_Xt}
    |x_1(0)-x_2(0)|,
    \qquad t\geq0.
    \label{eq:zelik-exponential-lower-bound}
\end{equation}
Thus two distinct complete trajectories of a finite-dimensional Lipschitz flow cannot converge faster than exponentially.
The super-exponential pair \eqref{eq:zelik-superexponential-short} violates this necessary condition.
Consequently the attractor $\Acal_Z$ is not contained in any finite-dimensional Lipschitz inertial manifold and does not admit a bi-Lipschitz finite-dimensional inertial form.
\end{remark}

The mechanism producing the super-exponential pair is intrinsically infinite-dimensional.  
To see this, one first constructs a time-periodic linear problem
\begin{equation}
    \partial_tw+Aw=\Phi(t)w
    \label{eq:zelik-Floquet}
\end{equation}
whose period map acts essentially as a weighted shift on the eigenbasis of $A$.  
During successive periods, mass is transferred to increasingly high eigenmodes, where the parabolic operator damps it more strongly.  
If $\lambda_n$ grows linearly with $n$, the multipliers accumulated after $N$ transfers behave schematically as
\[
    \prod_{n=1}^N\e^{-c\lambda_n}
    \sim
    \exp\left(
       -c\sum_{n=1}^N n
    \right)
    =
    \e^{-c'N^2}.
\]
Since $N$ is proportional to physical time, this produces the $\e^{-\kappa t^2}$ decay in \eqref{eq:zelik-superexponential-short}.  
The periodic linear system is then embedded into an autonomous dissipative semilinear equation by using additional low-dimensional dynamics to generate the
periodic coefficients; see \cite[Section~4]{Zelik2014}.  
The resulting example therefore exploits an infinite cascade through higher and higher spatial modes, precisely the type of mechanism which cannot occur in a fixed-dimensional Lipschitz ODE.

So how does our renormalization approach fit into this.
As before, we choose a smooth cutoff in $\norm{u}_{\Hcal}^2$ that equals one on a neighborhood of $\Acal_Z$ and vanishes outside a larger ball, and let $\widehat F_Z$ be the resulting preparation.
Then, there are finite constants $M,L>0$ such that
\begin{equation}
    \norm{\widehat F_Z(u)}_{\Hcal}\leq M,
    \qquad
    \norm{\widehat F_Z(u)-\widehat F_Z(v)}_{\Hcal}
    \leq L\norm{u-v}_{\Hcal}.
    \label{eq:zelik-prepared}
\end{equation}
Note, that the constant $L$ may exceed $L_Z$.
Under this construction the set $\Acal_Z$ and its super-exponential pair remain invariant, and its global attractor may be larger, but the original obstruction persists.
Next, we define $P_\eps,Q_\eps,A_\eps$ as in Assumption~\ref{ass:spectral_shift}.
Although the natural gaps are bounded, the modified spectral interface satisfies
\begin{equation}
    \alpha_\eps-\beta_\eps\geq\eps^{-1}.
    \label{eq:zelik-created-gap}
\end{equation}
In the following proposition, $S(t)$ denotes the semiflow of the prepared equation
$\partial_tu+Au=\widehat F_Z(u)$, and all graph maps use $\widehat F_Z$.

\begin{proposition}
\label{prop:regularized-zelik}
For all sufficiently small $\eps>0$, the renormalized equation
\begin{equation}
    \partial_tu^\eps+A_\eps u^\eps=\widehat F_Z(u^\eps)
    \label{eq:zelik-renormalized-equation}
\end{equation}
possesses a finite-dimensional Lipschitz inertial manifold
\begin{equation}
    \Mcal_\eps^Z=\{p+h_\eps^Z(p):p\in P_\eps\Hcal\},
    \label{eq:zelik-renormalized-IM}
\end{equation}
with
\begin{equation}
    \norm{h_\eps^Z}_\infty\leq C\eps^2,
    \qquad \Lip(h_\eps^Z)\leq C\eps.
    \label{eq:zelik-renormalized-graph-bounds}
\end{equation}
For every bounded $B\subset\Hcal$ and fixed $T>0$,
\begin{equation}
    \sup_{u_0\in B}\sup_{0\leq t\leq T}
    \norm{R_\eps S(t)u_0-S_\eps^\Mcal(t)R_\eps u_0}_{\Hcal}
    \leq C_{B,T}\eps^2.
    \label{eq:zelik-semiconjugacy-short}
\end{equation}
For fixed $\kappa\geq2$, let $\tau_\eps^\kappa=(\kappa/\mu)\eps^2|\log\eps|$.
Then, for small $\eps$ with $\tau_\eps^\kappa\leq T$,
\begin{equation}
    \sup_{u_0\in B}\sup_{\tau_\eps^\kappa\leq t\leq T}
    \norm{S(t)u_0-S_\eps^\Mcal(t-\tau_\eps^\kappa)
          R_\eps S(\tau_\eps^\kappa)u_0}_{\Hcal}
    \leq C_{B,T,\kappa}\eps^2.
    \label{eq:zelik-post-layer-short}
\end{equation}
\end{proposition}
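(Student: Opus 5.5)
The plan is to reduce every assertion to the general results of Sections~\ref{sec:inertial-manifold} and~\ref{sec:approximate-semiconjugacy}. Those results need only Assumptions~\ref{ass:operator}, \ref{ass:nonlinearity} and~\ref{ass:spectral_shift}, together with the gap condition. The bounded-gap hypothesis \eqref{eq:zelik-bounded-gap-short} plays no role in them.

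\emph{Verifying the standing assumptions.} First I check that the prepared problem fits the framework.

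The operator $A$ is positive and self-adjoint with compact inverse, so it has compact resolvent. Hence Assumption~\ref{ass:operator} holds.

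Assumption~\ref{ass:nonlinearity} follows from \eqref{eq:zelik-prepared}. The only point to check is that the cutoff preparation really gives global boundedness and a global Lipschitz bound. It does: $F_Z$ is globally Lipschitz, so it is bounded on the larger ball. The cutoff $\chi(\norm{u}_\Hcal^2)$ is smooth, bounded, and Lipschitz on that ball, and it vanishes outside it. A product of bounded Lipschitz functions supported on a bounded set is therefore bounded and Lipschitz on all of $\Hcal$.

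With $P_\eps,Q_\eps,A_\eps$ defined as in Assumption~\ref{ass:spectral_shift}, Lemma~\ref{lem:created_gap} gives \eqref{eq:zelik-created-gap}. Hence $\gamma_\eps\geq\eps^{-1}\geq 8L$ for $\eps\leq 1/(8L)$.

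\emph{Inertial manifold and graph bounds.} For such $\eps$, Theorem~\ref{thm:inertial-manifold} applied with $F=\widehat F_Z$ yields $\Mcal_\eps^Z$. By \eqref{eq:graph-sup-bound}, $\norm{h_\eps^Z}_\infty\leq (M/\mu)\eps^2$. By \eqref{eq:graph-lip-bound}, $\Lip(h_\eps^Z)\leq 4L\eps$. This gives \eqref{eq:zelik-renormalized-graph-bounds} with $C=\max\{M/\mu,4L\}$.

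\emph{Semi-conjugacy and shadowing estimates.} The remaining two estimates follow directly from earlier theorems.

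Estimate \eqref{eq:zelik-semiconjugacy-short} is Theorem~\ref{thm:finite-semiconjugacy}, i.e.\ \eqref{eq:bounded-semiconjugacy-rate}, applied to the prepared semiflow $S(t)$ and to $S_\eps^\Mcal$ built from $\widehat F_Z$.

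For \eqref{eq:zelik-post-layer-short}, I apply Theorem~\ref{thm:post-layer}. For $\kappa\geq2$ and $\eps\leq1$ we have $\eps^\kappa\leq\eps^2$. Also $d_\eps\leq 2M\eps^2/\mu$ by \eqref{eq:rho}. Hence
\[
C_T\bigl(R_B\eps^\kappa+d_\eps\bigr)\leq C_T\bigl(R_B+2M/\mu\bigr)\eps^2=:C_{B,T,\kappa}\eps^2 .
\]

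\emph{Main obstacle.} There is no real analytic obstacle; the proposition is a corollary of the general results. The one point needing care is the preparation step: the constants $M$ and $L$ of $\widehat F_Z$ must be finite and independent of $\eps$, so that the threshold on $\eps$ and all constants are uniform. I would also remark that nothing here contradicts Zelik's theorem. The manifold $\Mcal_\eps^Z$ is invariant only for the modified equation, its dimension grows like $\dim P_\eps\Hcal\to\infty$, and the estimates hold only on finite time intervals. The super-exponential pair is therefore approximated only on $[0,T]$, with $O(\eps^2)$ error, and is never reproduced asymptotically.
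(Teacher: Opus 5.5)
Your proposal is correct and follows the paper's proof, which simply invokes Theorems~\ref{thm:inertial-manifold}, \ref{thm:finite-semiconjugacy} and~\ref{thm:post-layer} for the prepared nonlinearity $\widehat F_Z$. Your explicit checks of Assumptions~\ref{ass:operator}--\ref{ass:spectral_shift}, the threshold $\eps\leq 1/(8L)$ and the constants are the routine verifications that this citation leaves implicit; the only cosmetic point is that Theorem~\ref{thm:post-layer} is stated for $\tau_\eps^\kappa<T$, but at $\tau_\eps^\kappa=T$ the defect reduces to the reconstruction error \eqref{eq:post-layer-retraction}, which is already $O(\eps^2)$.
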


\begin{proof}
This follows directly from Theorems \ref{thm:inertial-manifold}, \ref{thm:finite-semiconjugacy} and \ref{thm:post-layer}.
\end{proof}

This clarifies the interpretation of the artificial spectral gap and renormalization approach.
The artificial inertial manifold $\Mcal_\eps^Z$ is not a perturbation of an inertial manifold of \eqref{eq:zelik-equation-short}, since the latter does not possess one.  
Instead, it is an exact finite-dimensional invariant manifold of a nearby, resolution-dependent equation whose flow approximates the original dynamics in the one-sided sense quantified by \eqref{eq:zelik-semiconjugacy-short} and \eqref{eq:zelik-post-layer-short}.

The many-to-one nature of $R_\eps$ is therefore essential rather than technical.  
If the comparison could be upgraded uniformly to a bi-Lipschitz conjugacy between the original attractor and a finite-dimensional Lipschitz flow, the super-exponential pair
\eqref{eq:zelik-superexponential-short} would contradict \eqref{eq:zelik-exponential-lower-bound}.  
Thus the example marks a genuine boundary of what any finite-dimensional reduction with
Lipschitz inverse can preserve.

\section{Dynamical consequences of the regularization}
\label{sec:dynamical-consequences}

In the previous section we considered three applications of the spectral-gap renormalization, where we covered two settings in which an inertial manifold exists intrinsically and one in which it does not.
The next natural question to ask is therefore \emph{which dynamical properties are preserved in the regularization process.}\\
In particular, looking back at Section \ref{sec:approximate-semiconjugacy} we have three increasingly stronger comparison principles between the original semiflow $S(t)$ and the finite-dimensional inertial dynamics of the renormalized problem: finite-time approximate semi-conjugacy, post-initial-layer shadowing on bounded sets, and, under uniform incremental dissipativity, a uniform-in-time approximate semi-conjugacy.  

Throughout, $S(t)$ denotes the original semiflow, $S_\eps(t)$ the renormalized semiflow, and $S_\eps^{\Mcal}(t)$ the restriction of $S_\eps(t)$ to the inertial manifold $\Mcal_\eps$.  

\subsection{From approximate semi-conjugacy to dynamical information}
\label{sec:semiconjugacy-information}

\begin{proposition}
\label{prop:observables-semiconjugacy}
Let $B\subset\Hcal$ and set
\[
    \eta_{\eps,T}(B):=
    \sup_{u\in B}\Dcal_{\eps,T}(u)  =\sup_{u\in B}\sup_{0\le t\le T}
      \norm{R_\eps S(t)u-S_\eps^{\Mcal}(t)R_\eps u}_{\Hcal}.
\]
Then, for every Lipschitz observable $\Psi:\Mcal_\eps\to\R$ of our system,
\begin{equation}
\sup_{u\in B}\sup_{0\le t\le T}
\left|
\Psi(R_\eps S(t)u)
-
\Psi(S_\eps^{\Mcal}(t)R_\eps u)
\right|
\le \Lip(\Psi)\eta_{\eps,T}(B).
\label{eq:observable-preservation}
\end{equation}
Moreover, for every nonempty closed $K\subset\Mcal_\eps$,
\begin{equation}
\sup_{u\in B}\sup_{0\le t\le T}
\left|
\dist(R_\eps S(t)u,K)
-
\dist(S_\eps^{\Mcal}(t)R_\eps u,K)
\right|
\le \eta_{\eps,T}(B).
\label{eq:distance-observable-preservation}
\end{equation}
\end{proposition}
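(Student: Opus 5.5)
This is a direct consequence of the definition of $\eta_{\eps,T}(B)$ combined with Lipschitz continuity. The plan is to argue pointwise in $(u,t)$ and then take suprema. First we check that both arguments actually lie in $\Mcal_\eps$, so that $\Psi$ may be evaluated on them. By Proposition~\ref{prop:retraction}(a), $R_\eps S(t)u\in R_\eps(\Hcal)=\Mcal_\eps$. Also $R_\eps u\in\Mcal_\eps$, and the strict invariance \eqref{eq:strict-invariance} from Theorem~\ref{thm:inertial-manifold} gives $S_\eps^{\Mcal}(t)R_\eps u\in\Mcal_\eps$ for all $t\geq0$.

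For \eqref{eq:observable-preservation}, fix $u\in B$ and $t\in[0,T]$. The Lipschitz property of $\Psi$ on $\Mcal_\eps$, with respect to the metric induced by $\norm{\cdot}_{\Hcal}$, gives
\[
\left|\Psi(R_\eps S(t)u)-\Psi(S_\eps^{\Mcal}(t)R_\eps u)\right|
\le \Lip(\Psi)\norm{R_\eps S(t)u-S_\eps^{\Mcal}(t)R_\eps u}_{\Hcal}.
\]
The right-hand side is at most $\Lip(\Psi)\Dcal_{\eps,T}(u)\le\Lip(\Psi)\eta_{\eps,T}(B)$. Taking the supremum over $t\in[0,T]$ and $u\in B$ then yields the claim. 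If $\eta_{\eps,T}(B)=\infty$, for example when $B$ is unbounded, the inequality holds trivially.

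For \eqref{eq:distance-observable-preservation}, we apply the first part with $\Psi:=\dist(\cdot,K)$ restricted to $\Mcal_\eps$. It then suffices to show that $\Lip(\Psi)\le1$. For $x,y\in\Hcal$ and any $k\in K$ we have $\dist(x,K)\le\norm{x-k}\le\norm{x-y}+\norm{y-k}$. Taking the infimum over $k\in K$ and then symmetrising gives $|\dist(x,K)-\dist(y,K)|\le\norm{x-y}_{\Hcal}$. Nonemptiness of $K$ ensures that the distance is finite; closedness is not needed for this estimate.

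I do not expect a genuine obstacle. The only point requiring care is the preliminary check that both trajectories remain in $\Mcal_\eps$, which rests on the strict invariance and the retraction properties established earlier.
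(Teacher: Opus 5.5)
Your proposal is correct and follows essentially the same argument as the paper: a pointwise Lipschitz bound for fixed $(u,t)$, followed by taking suprema, with the distance function handled through its $1$-Lipschitz property in the ambient $\Hcal$-norm. Your additional check that both points lie in $\Mcal_\eps$ (via $R_\eps(\Hcal)=\Mcal_\eps$ and strict invariance) and your remark that closedness of $K$ is not needed are valid refinements that the paper leaves implicit.
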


\begin{proof}
Fix $u\in B$ and $t\in[0,T]$. Since $\Psi$ is Lipschitz on $\Mcal_\eps$, we have
\[
\begin{aligned}
\left|
\Psi(R_\eps S(t)u)
-
\Psi(S_\eps^{\Mcal}(t)R_\eps u)
\right|
&\le
\Lip(\Psi)
\norm{
R_\eps S(t)u
-
S_\eps^{\Mcal}(t)R_\eps u
}_{\Hcal}.
\end{aligned}
\]
Taking the supremum over $t\in[0,T]$ and $u\in B$ yields \eqref{eq:observable-preservation}.

For the second assertion, recall that the distance function to a nonempty closed set $K\subset\Mcal_\eps$ is one-Lipschitz with respect to the ambient $\Hcal$-norm.
Thus we obtain
\[
\left|
\dist(R_\eps S(t)u,K)
-
\dist(S_\eps^{\Mcal}(t)R_\eps u,K)
\right|
\le
\norm{
R_\eps S(t)u
-
S_\eps^{\Mcal}(t)R_\eps u
}_{\Hcal}.
\]
Again, taking the supremum over $u\in B$ and $t\in[0,T]$ yields \eqref{eq:distance-observable-preservation}.
\end{proof}

This shows that resolved observables and distances to specified regions are controlled by the semi-conjugacy defect.
For Lipschitz observables defined on $\Hcal$, the direct trajectory estimate of Theorem~\ref{thm:post-layer} gives the corresponding comparison with $S(t)u$ after the initial layer and restart.

For nonempty sets $B,C\subset\Hcal$, we define the one-sided Hausdorff semi-distance
\begin{equation}
    \dist_{\Hcal}(B,C)
    :=
    \sup_{b\in B}\inf_{c\in C}\norm{b-c}_{\Hcal},
    \label{eq:hausdorff-semidistance}
\end{equation}
and the Hausdorff distance
\begin{equation}
    d_{H,\Hcal}(B,C)
    :=
    \max\set{\dist_{\Hcal}(B,C),\dist_{\Hcal}(C,B)}.
    \label{eq:hausdorff-distance}
\end{equation}

\begin{proposition}
\label{prop:omega-limit-semiconjugacy}
Let $u\in\Hcal$ and assume that the two forward trajectories
\[
    \{R_\eps S(t)u:t\ge0\}
    \qquad\text{and}\qquad
    \{S_\eps^{\Mcal}(t)R_\eps u:t\ge0\}
\]
are precompact in $\Mcal_\eps$. Then
\begin{equation}
 d_{H,\Hcal}\!\left(
 \omega(R_\eps S(\cdot)u),
 \omega(S_\eps^{\Mcal}(\cdot)R_\eps u)
 \right)
 \le \Dcal_{\eps,\infty}(u).
\label{eq:omega-limit-semiconjugacy}
\end{equation}
\end{proposition}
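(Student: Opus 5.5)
Write $x(t):=R_\eps S(t)u$ and $y(t):=S_\eps^{\Mcal}(t)R_\eps u$, and put $D:=\Dcal_{\eps,\infty}(u)$. By definition \eqref{eq:uniform-semiconjugacy-defect}, $\norm{x(t)-y(t)}_{\Hcal}\le D$ for every $t\ge0$. We may assume $D<\infty$; otherwise there is nothing to prove. The plan is to transfer this pointwise-in-time closeness to the $\omega$-limit sets by passing to the limit along common sequences of times. Precompactness is what makes both $\omega$-limit sets nonempty and lets us extract convergent subsequences.

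For the first semi-distance, fix $a\in\omega(x)$. By definition there is a sequence $t_n\to\infty$ with $x(t_n)\to a$ in $\Hcal$. The sequence $y(t_n)$ lies in the precompact set $\{y(t):t\ge0\}$, so after passing to a subsequence $y(t_{n_k})\to b$ for some $b\in\Hcal$. Since $t_{n_k}\to\infty$, this $b$ belongs to $\omega(y)$ by definition. Continuity of the norm then gives
\[
\norm{a-b}_{\Hcal}=\lim_{k\to\infty}\norm{x(t_{n_k})-y(t_{n_k})}_{\Hcal}\le D.
\]
Hence $\inf_{c\in\omega(y)}\norm{a-c}_{\Hcal}\le D$. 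Taking the supremum over $a$ yields $\dist_{\Hcal}(\omega(x),\omega(y))\le D$.

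The reverse semi-distance follows by the identical argument with the roles of $x$ and $y$ exchanged, now using precompactness of $\{x(t):t\ge0\}$. Taking the maximum of the two semi-distances gives \eqref{eq:omega-limit-semiconjugacy}.

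Two points need checking. First, the $\omega$-limit sets are nonempty, which holds because $\{x(t_n)\}$ has a convergent subsequence by precompactness, so the Hausdorff distance is well defined. Second, the limit points lie in $\Mcal_\eps$, which holds because $\Mcal_\eps$ is closed as the graph of a continuous map over the closed subspace $P_\eps\Hcal$. The only real obstacle is making sure the subsequence is extracted along the same times for both trajectories. This is exactly why the uniform-in-time defect $D$ is needed rather than a finite-time one.
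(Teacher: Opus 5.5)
Your proof is correct and follows the paper's argument essentially verbatim. You fix a point of one $\omega$-limit set along times $t_n\to\infty$, use precompactness of the other trajectory to extract a subsequence converging to a point of its $\omega$-limit set, pass to the limit in the pointwise bound $\norm{x(t_n)-y(t_n)}_{\Hcal}\le\Dcal_{\eps,\infty}(u)$, and then swap the roles of the two trajectories. Your extra remarks on the $\omega$-limit sets being nonempty and on $\Mcal_\eps$ being closed are correct details that the paper leaves implicit.
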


\begin{proof}
Set
\[
    x_\eps(t):=R_\eps S(t)u,
    \qquad
    y_\eps(t):=S_\eps^{\Mcal}(t)R_\eps u.
\]
By definition of the uniform-in-time semi-conjugacy,
\[
    \norm{x_\eps(t)-y_\eps(t)}_{\Hcal}
    \le \Dcal_{\eps,\infty}(u),
    \qquad t\ge0.
\]
Let $x_\ast\in\omega(x_\eps)$. 
By definition of the omega-limit set, there exists a sequence $t_n\to\infty$ such that
$x_\eps(t_n)\to x_\ast$. 
Since the forward trajectory of $y_\eps$ is precompact, after passing to a subsequence there exists
$y_\ast\in\Mcal_\eps$ such that \(y_\eps(t_n)\to y_\ast\).
Since $t_n\to\infty$, we have $y_\ast\in\omega(y_\eps)$. 
Passing to the limit in
\[
    \norm{x_\eps(t_n)-y_\eps(t_n)}_{\Hcal}
    \le \Dcal_{\eps,\infty}(u)
\]
gives
\[
    \dist_\Hcal(x_\ast,\omega(y_\eps))
    \le \norm{x_\ast-y_\ast}_{\Hcal}
    \le \Dcal_{\eps,\infty}(u).
\]
Taking the supremum over $x_\ast\in\omega(x_\eps)$ yields one of the two one-sided Hausdorff estimates. 
Interchanging the roles of $x_\eps$ and $y_\eps$ gives the reverse estimate, and hence \eqref{eq:omega-limit-semiconjugacy} follows.
\end{proof}

\subsection{Attractor convergence from uniform-in-time semi-conjugacy}
\label{sec:attractor-convergence-uniform}

\begin{assumption}[Existence of global attractors]
\label{ass:global-attractors}
The original semiflow $S(t)$ has a compact global attractor $\Acal$, and, for all sufficiently small $\eps>0$, $S_\eps(t)$ has a compact global attractor $\Acal_\eps$.
\end{assumption}

\begin{remark}
We note that the above assumption is standard for dissipative parabolic equations. 
In particular, the existence of a bounded absorbing set together with asymptotic compactness of the semiflow implies the existence of a compact global attractor. 
In the present setting, asymptotic compactness is typically provided by the smoothing properties of the parabolic semigroup. 
Thus, once suitable dissipative estimates are available for both the original and the
renormalized equations, Assumption~\ref{ass:global-attractors} can usually be verified by standard attractor theory; see, for example, \cite{Temam1997}.
\end{remark}

For later use, set
\begin{equation}
    \vartheta_\Acal(\eps)
    :=\sup_{a\in\Acal}\norm{Q_\eps a}_{\Hcal}.
    \label{eq:attractor-tail-modulus}
\end{equation}
Strict invariance and boundedness of $\Acal$, together with the high-mode estimate, yield
\begin{equation}
    \vartheta_\Acal(\eps)
    \le\frac{M}{\lambda_\eps^+}
    \le\frac{M}{\mu}\eps^2.
    \label{eq:attractor-tail-bound}
\end{equation}
Indeed, for $a\in\Acal$ and $T>0$, choose $b_T\in\Acal$ with $S(T)b_T=a$. 
Then, by Proposition \ref{prop:tail-dynamics} we obtain
\[
    \norm{Q_\eps a}_{\Hcal}
    \le e^{-\lambda_\eps^+T}
         \sup_{b\in\Acal}\norm{b}_{\Hcal}
       +\frac{M}{\lambda_\eps^+}.
\]
Letting $T\to\infty$ and taking the supremum over $a\in\Acal$ proves \eqref{eq:attractor-tail-bound}.

\begin{theorem}[Hausdorff convergence from uniform-in-time semi-conjugacy]
\label{thm:hausdorff-from-uniform-semiconjugacy}
Assume Assumptions~\ref{ass:operator}, \ref{ass:nonlinearity}, \ref{ass:spectral_shift} and~\ref{ass:global-attractors}.
For small $\eps$, let
\[
    \eta_\eps:=\sup_{u\in\Acal\cup\Acal_\eps}\Dcal_{\eps,\infty}(u),
\]
and suppose $\eta_\eps\leq C_0\eps^2$ with $C_0$ independent of $\eps$.
With $d_\eps=M/\lambda_\eps^++M/\alpha_\eps$ as in \eqref{eq:rho},
\begin{equation}
    d_{H,\Hcal}(\Acal_\eps,\Acal)
    \leq\eta_\eps+d_\eps
    \leq\left(C_0+\frac{2M}{\mu}\right)\eps^2.
    \label{eq:hausdorff-attractor-uniform}
\end{equation}
In particular, the conclusion holds under the hypotheses of Theorem~\ref{thm:uniform-semiconjugacy}.
\end{theorem}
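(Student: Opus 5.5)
The plan is to prove the two one-sided Hausdorff estimates separately. Each uses invariance of the respective attractor to pull a point back along a long trajectory, then the semi-conjugacy defect $\eta_\eps$, and finally the retraction error \eqref{eq:retraction-error} to move between $\Mcal_\eps$ and $\Hcal$. The attracting property of the other attractor absorbs the remaining error as $t\to\infty$.

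\emph{Step 1: $\dist_\Hcal(\Acal,\Acal_\eps)\le\eta_\eps+d_\eps$.}
Fix $a\in\Acal$ and $t>0$. By strict invariance choose $b_t\in\Acal$ with $S(t)b_t=a$. Then split
\[
    \dist(a,\Acal_\eps)\le\norm{a-R_\eps a}_\Hcal+\norm{R_\eps S(t)b_t-S_\eps^{\Mcal}(t)R_\eps b_t}_\Hcal+\dist\bigl(S_\eps(t)R_\eps b_t,\Acal_\eps\bigr).
\]
\begin{itemize}
\item The first term is at most $\vartheta_\Acal(\eps)+M/\alpha_\eps\le M/\lambda_\eps^++M/\alpha_\eps=d_\eps$, by \eqref{eq:retraction-error} and \eqref{eq:attractor-tail-bound}.
\item The second term is at most $\eta_\eps$, since $b_t\in\Acal$.
\item For the third term, $R_\eps(\Acal)$ is bounded because $R_\eps$ is Lipschitz (Proposition~\ref{prop:retraction}(d)) and $\Acal$ is compact. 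Since $\Acal_\eps$ attracts bounded sets, $\sup_{b\in\Acal}\dist(S_\eps(t)R_\eps b,\Acal_\eps)\to0$ as $t\to\infty$.
\end{itemize}
Letting $t\to\infty$ gives the first bound.

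\emph{Step 2: $\dist_\Hcal(\Acal_\eps,\Acal)\le\eta_\eps+d_\eps$.}
Fix $a_\eps\in\Acal_\eps$. By Theorem~\ref{thm:inertial-manifold}, $\Acal_\eps\subset\Mcal_\eps$, so $R_\eps$ acts as the identity on $\Acal_\eps$. Write $a_\eps=S_\eps(t)b_t=S_\eps^{\Mcal}(t)R_\eps b_t$ with $b_t\in\Acal_\eps$, and split
\[
    \dist(a_\eps,\Acal)\le\eta_\eps+\norm{R_\eps S(t)b_t-S(t)b_t}_\Hcal+\dist(S(t)b_t,\Acal).
\]
\begin{itemize}
\item The first term bounds $\norm{a_\eps-R_\eps S(t)b_t}_\Hcal$ by $\eta_\eps$, since $b_t\in\Acal_\eps$.
\item For the middle term, \eqref{eq:retraction-error} and \eqref{eq:original-tail-dynamic} give the bound $\e^{-\lambda_\eps^+t}\sup_{b\in\Acal_\eps}\norm{b}_\Hcal+M/\lambda_\eps^++M/\alpha_\eps$. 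This tends to $d_\eps$ because $\Acal_\eps$ is bounded.
\item The last term tends to $0$ uniformly over $b_t\in\Acal_\eps$, since $\Acal$ attracts the bounded set $\Acal_\eps$.
\end{itemize}
Combining Steps 1 and 2 with $\eta_\eps\le C_0\eps^2$ and $d_\eps\le 2M\eps^2/\mu$ gives \eqref{eq:hausdorff-attractor-uniform}.

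\emph{Step 3: the final sentence.} This is the only place needing care, and I expect it to be the main obstacle. One must check that under Assumption~\ref{ass:incremental-dissipativity} the hypothesis $\eta_\eps\le C_0\eps^2$ holds. By \eqref{eq:uniform-semiconjugacy-estimate} this reduces to bounding $\norm{Q_\eps u}_\Hcal$ uniformly in $\eps$ over $u\in\Acal\cup\Acal_\eps$. By the remark after Theorem~\ref{thm:uniform-semiconjugacy}, both attractors are single equilibria.

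For an equilibrium $u$ of either flow, I would test with $u$ itself. In the renormalized case $A_\eps u=F(u)$ and $A_\eps\ge A$. Using the limiting monotonicity inequality $\langle A u-F(u)+F(0),u\rangle\ge\sigma\norm{u}^2$ then yields $\sigma\norm{u}^2\le\langle F(0),u\rangle$, hence $\norm{u}_\Hcal\le M/\sigma$. If justifying the inequality for $u\in\dom(A)$ via the limit is delicate, the alternative is the direct estimate $\norm{Q_\eps u}\le M/\lambda_\eps^+$, valid for any bounded invariant set as in \eqref{eq:attractor-tail-bound}, together with its analogue for $\Acal_\eps$ obtained via \eqref{eq:renormalised-tail-dynamic}. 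This second route already gives $\eta_\eps=O(\eps^2)$ without any size bound on the attractors.
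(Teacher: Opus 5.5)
Your proposal is correct and follows essentially the same route as the paper: two one-sided estimates, each obtained by pulling back along the strictly invariant attractor, applying the defect bound $\eta_\eps$ and the retraction error \eqref{eq:retraction-error} with the tail bounds, and using attraction as $T\to\infty$. The final sentence is handled exactly as in your second route, by inserting $\norm{Q_\eps u}_\Hcal\le M/\lambda_\eps^+$ on $\Acal$ and $\norm{Q_\eps u}_\Hcal\le M/\alpha_\eps$ on $\Acal_\eps$ into \eqref{eq:uniform-semiconjugacy-estimate}; the equilibrium-testing route is therefore unnecessary, and your use of $\sup_{b\in\Acal_\eps}\norm{b}_\Hcal$ in Step 2 instead of the paper's sharper $M/\alpha_\eps$ is harmless once $T\to\infty$.
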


\begin{proof}
The inertial manifold contains $\Acal_\eps$, so $R_\eps b=b$ and
$\norm{Q_\eps b}\leq M/\alpha_\eps$ for $b\in\Acal_\eps$.
The tail and graph bounds imply
\[
    \sup_{a\in\Acal}\norm{a-R_\eps a}\leq d_\eps,
    \quad \text{and} \quad
    \sup_{b\in\Acal_\eps}\norm{S(T)b-R_\eps S(T)b}
    \leq d_\eps+\e^{-\lambda_\eps^+T}\frac{M}{\alpha_\eps}.
\]
Strict invariance of each attractor, the defect bound and the triangle inequality give
\[
\begin{aligned}
    \dist_{\Hcal}(\Acal_\eps,\Acal)
    &\leq\eta_\eps+d_\eps
       +\e^{-\lambda_\eps^+T}\frac{M}{\alpha_\eps}
       +\dist_{\Hcal}(S(T)\Acal_\eps,\Acal),\\
    \dist_{\Hcal}(\Acal,\Acal_\eps)
    &\leq\eta_\eps+d_\eps
       +\dist_{\Hcal}(S_\eps(T)R_\eps\Acal,\Acal_\eps).
\end{aligned}
\]
For each fixed $\eps$, the compact sets $\Acal_\eps$ and $R_\eps\Acal$ are attracted by the respective global attractors, so letting $T\to\infty$ proves \eqref{eq:hausdorff-attractor-uniform}.
Finally, Theorem~\ref{thm:uniform-semiconjugacy} and the tail bounds on $\Acal\cup\Acal_\eps$ give the required $\eta_\eps\leq C_0\eps^2$.
\end{proof}

\begin{remark} \label{rem:uniform-time-attractor-conclusion} 
The preceding proof clarifies the additional dynamical information provided by uniform-in-time semi-conjugacy. 
A finite-time estimate controls the commutation defect only on intervals $[0,T]$, with a constant that may deteriorate as $T\to\infty$. 
In that setting, upper semi-continuity of attractors can still be obtained under an additional uniform boundedness assumption on the perturbed attractors: one first chooses a sufficiently large, but fixed, attraction time $T$ and then lets $\eps\to0$. 
This is the usual one-sided perturbation argument; see, for example, \cite{HoangOlsonRobinson2015,BortolanCarvalhoLanga2020}. 

The uniform-in-time estimate removes this restriction on the order of the limits. 
For fixed $\eps$, one may let $T\to\infty$ while keeping the semi-conjugacy defect uniformly of order $O(\eps^2)$. 
Together with strict invariance of $\Acal_\eps$, this yields the estimate of $\dist_{\Hcal}(\Acal_\eps,\Acal)$ without requiring the family $\{\Acal_\eps\}$ to lie in a common bounded set. 
Conversely, strict invariance of $\Acal$ allows one to pull back points of $\Acal$, while the compact set $R_\eps\Acal\subset\Mcal_\eps$ is attracted by $\Acal_\eps$. 
The same uniform commutation estimate therefore controls $\dist_{\Hcal}(\Acal,\Acal_\eps)$ as well. 
Hence, once the two global attractors exist, uniform-in-time approximate semi-conjugacy upgrades the finite-time dynamical comparison to full Hausdorff convergence of the attractors. 
\end{remark}

\subsection{Regularity of the perturbation and persistence of equilibria}
\label{sec:equilibrium-persistence}

The $C^0$ comparison developed in Section \ref{sec:approximate-semiconjugacy} is sufficient for orbit shadowing and semi-continuity of the attractor, but it cannot by itself preserve linearized stability or the Morse index of equilibria.  
To answer these questions we have to compare the vector fields in a topology in which the artificial term is small.  
The key point is that this becomes possible on the domain of $A$, or more generally on a sufficiently regular level of the Hilbert scale.

\begin{assumption}[Higher regularity near equilibria]
\label{ass:smooth-F-dynamics}
On an open bounded neighborhood $\Ucal\subset\Hcal^2$ of the equilibrium under consideration, the restriction $F|_\Ucal:\Ucal\to\Hcal$ is continuously Fr\'echet differentiable.
For each $u\in\Ucal$, its derivative extends to a bounded operator $DF(u):\Hcal\to\Hcal$.
There are constants $C_\Ucal,L_\Ucal>0$ such that
\begin{equation}
    \sup_{u\in\Ucal}\norm{DF(u)}_{\mathcal L(\Hcal)}\leq C_\Ucal
    \label{eq:DF-H-bound}
\end{equation}
and
\begin{equation}
    \norm{DF(u)-DF(v)}_{\mathcal L(\Hcal)}
    \leq L_\Ucal\norm{u-v}_2,
    \qquad u,v\in\Ucal.
    \label{eq:DF-local-Lipschitz-H2}
\end{equation}
\end{assumption}

\begin{definition}
An equilibrium $u_*\in\Hcal^2$ satisfies
\begin{equation*}
    \mathscr G(u_*):=Au_*-F(u_*)=0.
    \label{eq:stationary-original}
\end{equation*}
It is \emph{nondegenerate} if
\begin{equation*}
    D\mathscr G(u_*)=A-DF(u_*):\Hcal^2\to\Hcal
    \label{eq:stationary-linearization}
\end{equation*}
is an isomorphism, and \emph{hyperbolic} if, after complexification, its dynamical linearization
\begin{equation*}
    \mathscr L_*=-A+DF(u_*),\qquad\dom(\mathscr L_*)=\Hcal^2,
    \label{eq:dynamical-linearization-original}
\end{equation*}
satisfies
\begin{equation*}
    \sigma(\mathscr L_*)\cap i\R=\varnothing.
    \label{eq:hyperbolic-equilibrium}
\end{equation*}
\end{definition}

In particular, hyperbolicity implies nondegeneracy.
Compactness of the resolvent and boundedness of $DF(u_*)$ make the unstable spectral subspace finite-dimensional; its dimension, counting algebraic multiplicities, is the \emph{Morse index} $m(u_*)$.

\begin{theorem}[Persistence of nondegenerate equilibria]
\label{thm:equilibrium-persistence}
Assume Assumptions \ref{ass:operator}, \ref{ass:spectral_shift} and \ref{ass:smooth-F-dynamics}.  
Let $u_*\in\Hcal^r$, $r\geq2$, be a nondegenerate equilibrium of the original equation.
Then there are $\eps_0>0$, a neighborhood $U_*\subset\Hcal^2$ of $u_*$, and a unique family of equilibria
\begin{equation}
    u_*^\eps\in U_* ,
    \qquad
    A_\eps u_*^\eps=F(u_*^\eps),
    \qquad 0<\eps\leq\eps_0,
    \label{eq:continued-equilibrium}
\end{equation}
with
\begin{equation}
    \norm{u_*^\eps-u_*}_2
    \leq C_*\eps^{r-1}\norm{u_*}_r.
    \label{eq:equilibrium-persistence-rate}
\end{equation}
In particular, every $\Hcal^2$ nondegenerate equilibrium persists with an $O(\eps)$ error in the graph norm.
\end{theorem}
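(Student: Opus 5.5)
The plan is to treat the continued equilibrium as a zero of the $\eps$-dependent stationary map
\[
    \mathscr G_\eps(u):=A_\eps u-F(u)=Au+\eps^{-1}Q_\eps u-F(u),\qquad \mathscr G_\eps:\Ucal\subset\Hcal^2\to\Hcal,
\]
and to apply a quantitative inverse-function (Newton--Kantorovich) argument around $u_*$ in the graph norm $\norm{\cdot}_2$. This is legitimate because $\dom(A_\eps)=\dom(A)=\Hcal^2$. The key observation is that the artificial term is \emph{small} as an operator $\Hcal^2\to\Hcal$. By Lemma~\ref{lem:tail} with $a=0$ and $r=2$,
\[
    \norm{\eps^{-1}Q_\eps u}_{\Hcal}\leq \eps^{-1}(1+\Lambda_\eps)^{-1}\norm{u}_2\leq \frac{\eps}{\mu}\norm{u}_2 .
\]
Hence $\norm{\eps^{-1}Q_\eps}_{\mathcal L(\Hcal^2,\Hcal)}\leq \eps/\mu\to0$.

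First, I would bound the residual at $u_*$. Since $Au_*=F(u_*)$, we have $\mathscr G_\eps(u_*)=\eps^{-1}Q_\eps u_*$. Lemma~\ref{lem:tail} with $a=0$ and $r\geq2$ then gives
\[
    \norm{\mathscr G_\eps(u_*)}_{\Hcal}\leq C_{\mu,r}\,\eps^{r-1}\norm{u_*}_r .
\]
Second, I would establish uniform invertibility of the linearization. Write
\[
    D\mathscr G_\eps(u_*)=D\mathscr G(u_*)+\eps^{-1}Q_\eps .
\]
Here $D\mathscr G(u_*)=A-DF(u_*)$ is an isomorphism $\Hcal^2\to\Hcal$ by nondegeneracy, with inverse norm $K$. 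The perturbation has $\mathcal L(\Hcal^2,\Hcal)$-norm at most $\eps/\mu$. A Neumann series therefore shows that $D\mathscr G_\eps(u_*)$ is an isomorphism for $\eps\leq\eps_0$, with $\norm{D\mathscr G_\eps(u_*)^{-1}}\leq 2K$ uniformly in $\eps$.

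Third, I would set up the fixed-point map
\[
    \mathcal N_\eps(u):=u-D\mathscr G_\eps(u_*)^{-1}\mathscr G_\eps(u)
\]
on a closed ball $\overline B_\rho(u_*)\subset\Ucal$ in $\Hcal^2$. For $u,v$ in this ball,
\[
    \mathcal N_\eps(u)-\mathcal N_\eps(v)=D\mathscr G_\eps(u_*)^{-1}\int_0^1\bigl[DF(v+s(u-v))-DF(u_*)\bigr](u-v)\,\mathrm ds ,
\]
because the linear parts $A+\eps^{-1}Q_\eps$ cancel exactly. Assumption~\ref{ass:smooth-F-dynamics} bounds the bracket by $L_\Ucal\rho$ in $\mathcal L(\Hcal)$, which a fortiori controls it as an operator $\Hcal^2\to\Hcal$. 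Hence
\[
    \Lip(\mathcal N_\eps)\leq 2KL_\Ucal\rho\leq\tfrac12 \quad\text{once } \rho\leq(4KL_\Ucal)^{-1},
\]
independently of $\eps$. Also
\[
    \norm{\mathcal N_\eps(u_*)-u_*}_2\leq 2K C_{\mu,r}\eps^{r-1}\norm{u_*}_r\leq\rho/2
\]
for $\eps$ small. So $\mathcal N_\eps$ maps the ball into itself and is a contraction.

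Banach's fixed-point theorem then yields a unique zero $u_*^\eps$ in $U_*:=B_\rho(u_*)$. The standard estimate gives
\[
    \norm{u_*^\eps-u_*}_2\leq 2\norm{\mathcal N_\eps(u_*)-u_*}_2\leq C_*\eps^{r-1}\norm{u_*}_r ,
\]
which is \eqref{eq:equilibrium-persistence-rate}. The case $r=2$ gives the $O(\eps)$ graph-norm statement.

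The main obstacle is the second step. The operator $\eps^{-1}Q_\eps$ is unbounded in size on $\Hcal$, and it is small only when measured from $\Hcal^2$ to $\Hcal$. The whole argument must therefore be run in the graph norm, where the scaling $\Lambda_\eps=\mu\eps^{-2}$ exactly beats the $\eps^{-1}$ damping. I would also check that the Lipschitz bound on $DF$ in $\mathcal L(\Hcal)$ transfers to $\mathcal L(\Hcal^2,\Hcal)$ via $\norm{\cdot}_0\leq\norm{\cdot}_2$, and that $\rho$ and $\eps_0$ can be chosen independently of $\eps$, so that $U_*$ is $\eps$-independent.
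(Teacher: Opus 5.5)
Your proposal is correct and follows essentially the paper's route. Like the paper, you run a contraction argument in the graph norm of $\Hcal^2$, using the residual $\eps^{-1}Q_\eps u_*=O(\eps^{r-1})$ and the bound $\norm{\eps^{-1}Q_\eps}_{\mathcal L(\Hcal^2,\Hcal)}=O(\eps)$ from Lemma~\ref{lem:tail}. The only difference is cosmetic: the paper uses the unperturbed inverse $K_*=(A-DF(u_*))^{-1}$ and absorbs the $C\eps$ term into the contraction constant, whereas you first invert $D\mathscr G_\eps(u_*)$ by a Neumann series so that the linear parts cancel exactly.
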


\begin{proof}
Define
\[
    \mathscr G_\eps(u)
    :=A_\eps u-F(u)
    =\mathscr G(u)+\eps^{-1}Q_\eps u,
    \qquad u\in\Hcal^2.
\]
By Lemma \ref{lem:tail} with $a=0$ and $r=2$, for every
$z\in\Hcal^2$,
\[
    \eps^{-1}\norm{Q_\eps z}_{\Hcal}
    \le
    \eps^{-1}(1+\Lambda_\eps)^{-1}\norm{z}_2
    \le C\eps\norm{z}_2,
\]
where we use $\Lambda_\eps=\mu\eps^{-2}$. 
Hence
\begin{equation}
    \norm{
        D\mathscr G_\eps(u_*)-D\mathscr G(u_*)
    }_{\mathcal L(\Hcal^2,\Hcal)}
    =
    \norm{\eps^{-1}Q_\eps}_{\mathcal L(\Hcal^2,\Hcal)}
    \le C\eps.
    \label{eq:stationary-derivative-perturbation}
\end{equation}

Set
\[
    L_*:=D\mathscr G(u_*)
    =A-DF(u_*):\Hcal^2\to\Hcal,
    \qquad
    K_*:=L_*^{-1}\in\mathcal L(\Hcal,\Hcal^2).
\]
By the nondegeneracy of the equilibrium $K_*$ is well-defined. 
Since $\mathscr G(u_*)=0$, Lemma \ref{lem:tail} also yields
\begin{equation}
    \norm{\mathscr G_\eps(u_*)}_{\Hcal}
    =
    \eps^{-1}\norm{Q_\eps u_*}_{\Hcal}
    \le C\eps^{r-1}\norm{u_*}_r.
    \label{eq:stationary-residual}
\end{equation}

Write $u=u_*+w$ and define
\[
    \mathcal T_\eps(w)
    :=w-K_*\mathscr G_\eps(u_*+w).
\]
Then $u_*+w$ is a zero of $\mathscr G_\eps$ if and only if $w$ is
a fixed point of $\mathcal T_\eps$. 
Moreover,
\[
    D\mathcal T_\eps(w)
    =
    K_*\Bigl(
        DF(u_*+w)-DF(u_*)-\eps^{-1}Q_\eps
    \Bigr).
\]
By the local Lipschitz estimate \eqref{eq:DF-local-Lipschitz-H2} and \eqref{eq:stationary-derivative-perturbation}, there are $C_F>0$ and $\rho>0$ such that
\[
    \norm{D\mathcal T_\eps(w)}_{\mathcal L(\Hcal^2)}
    \le
    \norm{K_*}\bigl(C_F\rho+C\eps\bigr),
    \qquad \norm{w}_2\le\rho.
\]
Choosing first $\rho>0$ and then $\eps_0>0$ sufficiently small, we obtain
\[
    \sup_{\norm{w}_2\le\rho}
    \norm{D\mathcal T_\eps(w)}_{\mathcal L(\Hcal^2)}
    \le q<1,
    \qquad 0<\eps\le\eps_0.
\]
Furthermore, by \eqref{eq:stationary-residual},
\[
    \norm{\mathcal T_\eps(0)}_2
    \le C\eps^{r-1}\norm{u_*}_r.
\]
After decreasing $\eps_0$ if necessary, this is at most $(1-q)\rho$. 
Hence $\mathcal T_\eps$ maps the closed ball $\overline B_{\Hcal^2}(0,\rho)$ into itself and is a contraction there. 
Banach's fixed-point theorem therefore yields a unique fixed point $w_\eps$ in this ball. Setting
\[
    u_*^\eps:=u_*+w_\eps,
\]
we obtain
\[
\begin{aligned}
    \norm{u_*^\eps-u_*}_2
    &=\norm{w_\eps}_2 \\
    &\le q\norm{w_\eps}_2+\norm{\mathcal T_\eps(0)}_2,
\end{aligned}
\]
and consequently
\[
    \norm{u_*^\eps-u_*}_2
    \le
    \frac{C}{1-q}\eps^{r-1}\norm{u_*}_r.
\]
This proves the desired estimate.
\end{proof}

\begin{theorem}[Preservation of hyperbolicity and Morse index]
\label{thm:morse-index-persistence}
Under the assumptions of Theorem \ref{thm:equilibrium-persistence}, suppose in addition that $u_*$ is hyperbolic. 
Then $u_*^\eps$ is hyperbolic for all sufficiently small $\eps>0$, and the Morse index satisfies
\begin{equation}
    m(u_*^\eps)=m(u_*).
    \label{eq:morse-index-preserved}
\end{equation}
\end{theorem}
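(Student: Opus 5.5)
We treat the linearization at the continued equilibrium as a small relatively bounded perturbation of the linearization at the original equilibrium, and compare spectral projections. Set
\[
    \mathscr L_*=-A+DF(u_*),
    \qquad
    \mathscr L_\eps:=-A_\eps+DF(u_*^\eps),
    \qquad \dom(\mathscr L_\eps)=\Hcal^2.
\]
Then
\[
    \mathscr L_\eps-\mathscr L_*=-\eps^{-1}Q_\eps+\bigl(DF(u_*^\eps)-DF(u_*)\bigr).
\]
The first term is bounded by $C\eps$ in $\mathcal L(\Hcal^2,\Hcal)$ by \eqref{eq:stationary-derivative-perturbation}. The second is bounded in $\mathcal L(\Hcal)$, hence in $\mathcal L(\Hcal^2,\Hcal)$, by $L_\Ucal\norm{u_*^\eps-u_*}_2\le C\eps^{r-1}$, using \eqref{eq:DF-local-Lipschitz-H2} and Theorem~\ref{thm:equilibrium-persistence}. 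Hence $\norm{\mathscr L_\eps-\mathscr L_*}_{\mathcal L(\Hcal^2,\Hcal)}\to0$. Since $\mathscr L_*$ is closed with domain $\Hcal^2$ and the graph norm is equivalent to $\norm{\cdot}_2$, this is convergence in the generalized (gap) sense. Standard Kato perturbation theory then gives upper semicontinuity of the spectrum on compact sets of the resolvent set, together with continuity of Riesz projections.

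The key steps run as follows.

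\textbf{Step 1 (compact resolvent region).} For $\lambda$ in a compact set $K\subset\rho(\mathscr L_*)$, factor
\[
    \lambda-\mathscr L_\eps=\bigl[I-(\mathscr L_\eps-\mathscr L_*)(\lambda-\mathscr L_*)^{-1}\bigr](\lambda-\mathscr L_*).
\]
Since $(\lambda-\mathscr L_*)^{-1}$ is uniformly bounded $\Hcal\to\Hcal^2$ on $K$, a Neumann series shows that $K\subset\rho(\mathscr L_\eps)$ for small $\eps$. It also gives $\norm{(\lambda-\mathscr L_\eps)^{-1}-(\lambda-\mathscr L_*)^{-1}}_{\mathcal L(\Hcal)}\to0$ uniformly on $K$.

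\textbf{Step 2 (uniform control away from a compact set).} We must exclude spectrum of $\mathscr L_\eps$ on all of $i\R$, including large $\abs{\lambda}$, and also in the unbounded region $\{\Re\lambda\ge0,\ \abs{\lambda}\ge R\}$. Here we use self-adjointness of $A_\eps\ge0$. For $\Re\lambda\ge-\tfrac12$ and $\abs{\lambda}\ge R$, the spectral theorem gives $\norm{(\lambda+A_\eps)^{-1}}_{\mathcal L(\Hcal)}\le\dist(\lambda,(-\infty,0])^{-1}$, which is at most $1/(2C_\Ucal)$ once $R$ is large, uniformly in $\eps$. A Neumann series with the bounded perturbation $DF(u_*^\eps)$, whose norm is at most $C_\Ucal$, then shows that this region lies in $\rho(\mathscr L_\eps)$ for all $\eps$. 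The same argument applies to $\mathscr L_*$.

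\textbf{Step 3 (counting unstable eigenvalues).} Hyperbolicity of $u_*$ and compactness of the resolvent give finitely many eigenvalues of $\mathscr L_*$ in $\{\Re\lambda>0\}$. All of them lie in $\{\Re\lambda>\delta,\ \abs{\lambda}<R\}$ for some $\delta>0$. Take $\Gamma$ to be the boundary of the box $\{\delta/2\le\Re\lambda\le R',\ \abs{\Im\lambda}\le R'\}$, where $R'$ is large enough that the box contains these eigenvalues and Step 2 applies outside it. Take $K$ to be the compact set $\{\abs{\Re\lambda}\le\delta/2,\ \abs{\Im\lambda}\le R'\}\cup\Gamma$, which lies in $\rho(\mathscr L_*)$. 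Steps 1 and 2 then show that $\sigma(\mathscr L_\eps)\cap i\R=\varnothing$, so $u_*^\eps$ is hyperbolic, and that $\sigma(\mathscr L_\eps)\cap\{\Re\lambda>0\}$ lies inside $\Gamma$. The Riesz projections
\[
    \Pi_\eps=\frac1{2\pi i}\oint_\Gamma(\lambda-\mathscr L_\eps)^{-1}\,d\lambda
\]
converge in norm to $\Pi_*$ by Step 1. Projections at distance less than $1$ have equal rank, so
\[
    m(u_*^\eps)=\rank\Pi_\eps=\rank\Pi_*=m(u_*).
\]

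The main obstacle is Step 2. The perturbation $\eps^{-1}Q_\eps$ is not small in $\mathcal L(\Hcal)$; it is small only relative to $A$. Consequently resolvent estimates cannot be transferred from $\mathscr L_*$ for large $\abs{\lambda}$. They must instead be derived directly from the self-adjoint structure of $A_\eps$, with bounds independent of $\eps$. The relative smallness in $\mathcal L(\Hcal^2,\Hcal)$ is used only on the compact set $K$, and there the bounded map $(\lambda-\mathscr L_*)^{-1}:\Hcal\to\Hcal^2$ absorbs the unboundedness.
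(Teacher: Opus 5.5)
Your proof is correct and follows the paper's argument essentially step for step. It uses the same $\mathcal L(\Hcal^2,\Hcal)$ bound on $\mathscr L_\eps-\mathscr L_*$, the same factorization and Neumann series for uniform resolvent convergence on compact subsets of $\rho(\mathscr L_*)$, the same $\eps$-uniform Neumann series around $\lambda+A_\eps$ to confine the right-half-plane spectrum to a bounded set, and the same Riesz-projection rank comparison. One small point needs stating: for $K\subset\rho(\mathscr L_*)$ you must also take $\delta$ so small that no stable eigenvalue of $\mathscr L_*$ has real part in $[-\delta/2,0)$, which is possible since only finitely many eigenvalues lie in a compact rectangle and none lie on $i\R$. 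Alternatively, restrict the strip to $0\leq\Re\lambda\leq\delta/2$, which is all you need and matches the paper's use of $\{\Re z\geq0,\ \abs{z}\leq R\}$ minus the contour interiors.
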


\begin{proof}
Let
\[
    \mathscr L_*:=-A+DF(u_*),
    \qquad
    \mathscr L_*^\eps:=-A_\eps+DF(u_*^\eps),
    \qquad
    \dom(\mathscr L_*)=\dom(\mathscr L_*^\eps)=\Hcal^2.
\]
Then
\[
    \mathscr L_*^\eps-\mathscr L_*
    =
    -\eps^{-1}Q_\eps
    +DF(u_*^\eps)-DF(u_*).
\]
By \eqref{eq:stationary-derivative-perturbation},
Theorem~\ref{thm:equilibrium-persistence}, and the local Lipschitz
continuity of $DF$,
\begin{equation}
    \norm{\mathscr L_*^\eps-\mathscr L_*}
         _{\mathcal L(\Hcal^2,\Hcal)}
    \le C\bigl(\eps+\eps^{r-1}\bigr)
    \rightarrow0.
    \label{eq:graph-norm-linearization-convergence}
\end{equation}

Let $z\in\rho(\mathscr L_*)$. 
Since
\[
    (z-\mathscr L_*)^{-1}\in\mathcal L(\Hcal,\Hcal^2),
\]
\eqref{eq:graph-norm-linearization-convergence} and the factorization
\[
    z-\mathscr L_*^\eps
    =
    \left[
        I-(\mathscr L_*^\eps-\mathscr L_*)
          (z-\mathscr L_*)^{-1}
    \right]
    (z-\mathscr L_*)
\]
show, by a Neumann-series argument, that $z\in\rho(\mathscr L_*^\eps)$ for sufficiently small $\eps$ and
\begin{equation}
    (z-\mathscr L_*^\eps)^{-1}
    \rightarrow
    (z-\mathscr L_*)^{-1}
    \qquad\text{in }\mathcal L(\Hcal).
    \label{eq:norm-resolvent-convergence}
\end{equation}
The convergence is uniform for $z$ in compact subsets of $\rho(\mathscr L_*)$.

Both $\mathscr L_*$ and $\mathscr L_*^\eps$ have compact resolvent.
Since $\mathscr L_*$ is hyperbolic, its unstable spectrum consists of finitely many isolated eigenvalues of finite algebraic multiplicity.
Choose disjoint positively oriented contours contained in the open right half-plane whose union $\Gamma$ encloses precisely these eigenvalues.
By the uniform resolvent convergence on $\Gamma$, the corresponding Riesz projections
\[
    \Pi^u
    :=
    \frac{1}{2\pi i}
    \int_\Gamma (z-\mathscr L_*)^{-1}\,dz,
    \qquad
    \Pi_\eps^u
    :=
    \frac{1}{2\pi i}
    \int_\Gamma (z-\mathscr L_*^\eps)^{-1}\,dz
\]
satisfy
\[
    \norm{\Pi_\eps^u-\Pi^u}_{\mathcal L(\Hcal)}
    \rightarrow0.
\]
Hence, for sufficiently small $\eps$,
\[
    \rank\Pi_\eps^u=\rank\Pi^u.
\]

It remains to exclude additional spectrum in the closed right half-plane. 
Since $u_*^\eps\to u_*$, there is $R>0$ such that
\[
    \norm{DF(u_*^\eps)}_{\mathcal L(\Hcal)}\le R
\]
for all sufficiently small $\eps$. 
As $A_\eps$ is nonnegative and self-adjoint, a Neumann-series argument applied to $z+A_\eps-DF(u_*^\eps)$ shows that
\[
    \sigma(\mathscr L_*^\eps)\cap\{\Re z\ge0\}
    \subset \{z:\Re z\ge0,\ |z|\le R\}.
\]
After slightly enlarging $R$ if necessary, the compact set
\[
    \{z:\Re z\ge0,\ |z|\le R\}
\]
minus the interiors of the contours $\Gamma$ is contained in $\rho(\mathscr L_*)$. 
Uniform resolvent convergence on this compact set therefore excludes any additional spectrum of $\mathscr L_*^\eps$ there for sufficiently small $\eps$.

Consequently, $\mathscr L_*^\eps$ has no spectrum on the imaginary axis and is therefore hyperbolic. 
Moreover, its unstable spectrum is precisely the spectrum enclosed by $\Gamma$, so
\[
    m(u_*^\eps)
    =\rank\Pi_\eps^u
    =\rank\Pi^u
    =m(u_*).
\]
\end{proof}

\begin{remark}
\label{rem:active-modes-cutoff}
For a fixed hyperbolic equilibrium, the unstable spectrum is finite.  
Since the cutoff satisfies $\Lambda_\eps\to\infty$, the artificial spectral gap is eventually placed far beyond every fixed low-frequency scale.  
The proof above makes this principle quantitative without requiring the linearized eigenvectors to coincide with eigenvectors of $A$.  
This is the rigorous version of the requirement that the cutoff should lie beyond the dynamically active spectral range, when we apply this approach in a concrete setting.  
\end{remark}


\section{A reaction-diffusion system without a natural spectral gap}
\label{sec:coupled-AC-benchmark}

In this section we apply the renormalization procedure to a specific example that satisfies the following properties:
\begin{enumerate}[label=\textup{(\roman*)},leftmargin=2.2em]
    \item the original equation has nontrivial dynamics, including, for example, several equilibria of different Morse index and spatially heterogeneous steady states;
    \item the natural spectrum does not satisfy the spectral-gap condition required by the standard inertial-manifold theorem at any spectral interface; and 
    \item the artificial spectral renormalization applies without any problem-specific modification.
\end{enumerate}
As a model that meets these criteria we choose a coupled Allen-Cahn system on a three-dimensional bounded domain.\\

The Allen-Cahn equation originates in the classical phase-field model of Allen and Cahn \cite{AllenCahn1979}.  
Vector-valued and multi-component Allen-Cahn equations associated with multi-well potentials have subsequently been studied in connection with multiphase interfaces, heteroclinic structures, and more complicated stationary patterns; see, for example, \cite{BronsardReitich1993,AlamaBronsardGui1997, BatesFuscoSmyrnelis2017,Fusco2017}.
Of particular relevance to our example are symmetric linearly coupled Allen-Cahn systems, for which the coupling changes both the stationary states and their spectral stability; see, for example, \cite{Fazly2025LinearCoupling}.

\paragraph{Problem set-up.}
On $\Omega=(0,\pi)^3$, we consider the following system
\begin{equation}
\begin{aligned}
    \partial_tu&=\Delta u+\rho u-u^3+b(v-u),\\
    \partial_tv&=\Delta v+\rho v-v^3+b(u-v),\\
    \partial_\nu u&=\partial_\nu v=0\qquad\text{on }\partial\Omega.
\end{aligned}
\label{eq:coupled-AC-rho}
\end{equation}
To fix the parameters, we choose
\begin{equation}
    \rho=1+\theta,\qquad b=\frac25+\theta,\qquad -\theta_*<\theta<\theta_*.
    \label{eq:parameter_regime}
\end{equation}
We let $0<\bar\theta<1/20$ and take $\theta_*\leq\bar\theta$ sufficiently small.
This allows us, in particular, to study the bifurcation through $\theta=0$.

\begin{remark}
The first observation we make is the following.
At the trivial equilibrium zero, the reaction matrix is given by
\begin{equation}
    DF_0(0)=\begin{pmatrix}\rho-b&b\\b&\rho-b\end{pmatrix}.
    \label{eq:coupled-AC-linearization-zero-reaction}
\end{equation}
Its synchronized and anti-synchronized eigenvectors are $e_{\rm s}=2^{-1/2}(1,1)^T$ and $e_{\rm a}=2^{-1/2}(1,-1)^T$, with corresponding eigenvalues
\begin{equation}
    \sigma_{\rm s}=\rho=1+\theta,\qquad
    \sigma_{\rm a}=\rho-2b=\frac15-\theta.
    \label{eq:coupled-AC-sync-antisync-eigenvalues}
\end{equation}
Thus the synchronized mode has crossed the first nonzero Neumann eigenvalue $\lambda_N=1$, whereas the anti-synchronized instability is confined to the spatially constant mode.
\end{remark}

We write $U=(u,v)^T$, set $\Hcal=L^2(\Omega;\R^2)$ as underlying function space, and define
\begin{equation}
    A=\begin{pmatrix}-\Delta_N&0\\0&-\Delta_N\end{pmatrix},
    \label{eq:coupled-AC-A}
\end{equation}
\begin{equation}
    F_0(U)=\begin{pmatrix}(\rho-b)u-u^3+bv\\(\rho-b)v-v^3+bu\end{pmatrix}.
    \label{eq:coupled-AC-F0}
\end{equation}
The original system can be now expressed as
\begin{equation}
    \partial_tU+AU=F_0(U).
    \label{eq:coupled-AC-abstract}
\end{equation}
We note, that the linear operator satisfies Assumption~\ref{ass:operator}, however the nonlinear reaction still requires a modification to satisfy Assumption~\ref{ass:nonlinearity}.

To get a better idea of the possible growth of solutions we show that system \eqref{eq:coupled-AC-abstract} has a compact global attractor.
Indeed, the original system is a gradient system with energy
\begin{equation}
    \Ecal(U)=\int_\Omega\left[
       \frac12(|\nabla u|^2+|\nabla v|^2)
       -\frac\rho2(u^2+v^2)+\frac14(u^4+v^4)
       +\frac b2(u-v)^2\right]dx
    \label{eq:coupled-AC-energy-rho}
\end{equation}
and, along classical solutions,
\begin{equation}
    \frac{d}{dt}\Ecal(U(t))
    =-\|\partial_tu\|_{L^2}^2-\|\partial_tv\|_{L^2}^2.
    \label{eq:coupled-AC-energy-decay}
\end{equation}
The cubic damping yields a global $L^2$ semiflow.
After positive-time smoothing due to the parabolic operator, the maximum principle bounds both components of the amplitudes by the solution of the equation $\dot a=\rho a-a^3$.
Consequently every box $|u|,|v|<R$, with $R>\sqrt{1+\bar\theta}$, absorbs bounded subsets of $\Hcal$, uniformly for $|\theta|\leq\bar\theta$.
Using the parabolic smoothing then proves the existence of a compact global attractor $\Acal$.

\paragraph{A bounded dissipative cutoff.}
The existence of a compact global attractor motivates our choice for the cutoff function.
First, we fix such an $R>\sqrt{1+\bar\theta}>0$ and choose $R_1>R_0>\sqrt2R$, and then take a smooth function $\chi:[0,\infty)\to[0,1]$ equal to one on $[0,R_0^2]$ and zero on $[R_1^2,\infty)$.
Writing $f_0:\R^2\to\R^2$ for the nonlinear reaction in \eqref{eq:coupled-AC-F0}, we set
\[
    f(z)=\chi(|z|^2)f_0(z)
          -(1-\chi(|z|^2))\frac{z}{\sqrt{1+|z|^2}},
    \qquad F(U)(x)=f(U(x)).
\]
Thus, this cutoff composed with the original nonlinearity is smooth in $z$ and $\theta$, agrees with $f_0$ near $[-R,R]^2$, and has uniformly bounded derivatives for $|\theta|\leq\bar\theta$.
Its Nemytskii map satisfies
\begin{equation}
    \norm{F(U)}_{\Hcal}\leq M,\qquad
    \norm{F(U)-F(V)}_{\Hcal}\leq L\norm{U-V}_{\Hcal},
    \label{eq:coupled-AC-prepared-F}
\end{equation}
with $M,L$ independent of $\theta$ in this interval.

Then, we introduce 
\[
    \Ucal=\{(u,v)\in L^\infty(\Omega;\R^2):
       \|u\|_\infty<R,\ \|v\|_\infty<R\}.
\]
Both equations leave this set positively invariant and absorb bounded $L^2$ sets into it.
Indeed, on a face where $z_i=\pm a$ and $|z_{3-i}|\leq a$, $a\geq R$, the prepared reaction satisfies
\[
    \operatorname{sgn}(z_i)f_i(z)
    \leq-\chi(|z|^2)a(a^2-\rho)
       -(1-\chi(|z|^2))\frac{a}{\sqrt{1+2a^2}}
    \leq-c_R<0.
\]
Using the comparison principle and positive-time smoothing we obtain the entrance property.
In particular, our chosen modification preserves the original attractor $\Acal$, as the two semiflows have the same bounded complete trajectories, all of which lie in $\Ucal$.

\begin{remark}
\label{rmk:modified-original}
Let $\widetilde S(t)$ and $S(t)$ denote the polynomial and prepared semiflows, respectively.
For bounded $B\subset\Hcal$, choose $t_B$ so that $\widetilde S(t_B)B\subset\Ucal$.
Uniqueness and positive invariance give
\[
    \widetilde S(t)U_0
    =S(t-t_B)\widetilde S(t_B)U_0,
    \qquad U_0\in B,\quad t\geq t_B.
\]
Thus comparisons transfer to the polynomial equation from the entry state, and apply directly to its equilibria and complete trajectories on $\Acal$.
The two flows need not agree before this restart.
\end{remark}

\begin{remark}
On the region $\Ucal$ the reaction derivative is given by
\begin{equation}
    DF_0(u,v)=
    \begin{pmatrix}\rho-b-3u^2&b\\b&\rho-b-3v^2\end{pmatrix}.
    \label{eq:coupled-AC-matrix-derivative}
\end{equation}
This is a matrix-valued multiplication operator, so the scalar spatial-averaging criterion discussed in Section~\ref{subsec:three-dimensional-spatial-averaging} does not apply automatically.
However, we want to point out that failure of this criterion and of the ordinary spectral-gap condition does not establish nonexistence of an inertial manifold for the original system.
\end{remark}

\paragraph{Renormalized system.}
In the next step we apply the spectral cutoff and shifted operator from Assumption~\ref{ass:spectral_shift} to define
\begin{equation}
    \partial_tU^\eps+AU^\eps+\eps^{-1}Q_\eps U^\eps=F(U^\eps).
    \label{eq:coupled-AC-renormalized}
\end{equation}
Whenever $U^\eps$ lies in $\Ucal$, this can be written as
\begin{equation}
\begin{aligned}
    \partial_tu^\eps&=\Delta u^\eps+(\rho-b)u^\eps-(u^\eps)^3+bv^\eps
                         -\eps^{-1}(Q_\eps U^\eps)_1,\\
    \partial_tv^\eps&=\Delta v^\eps+(\rho-b)v^\eps-(v^\eps)^3+bu^\eps
                         -\eps^{-1}(Q_\eps U^\eps)_2.
\end{aligned}
\label{eq:coupled-AC-renormalized-components}
\end{equation}

For the renormalized equation, dissipativity follows from an $L^2$ estimate.
Since 
$$f_0(z)\cdot z\leq\rho|z|^2-|z|^4/2\quad \text{and}\quad R_0>\sqrt{2\rho},$$
the nonlinear cutoff satisfies $f(z)\cdot z\leq C-c|z|$.
Hence
\[
    \frac12\frac{d}{dt}\norm{U^\eps}_{\Hcal}^2
    +\norm{\nabla U^\eps}_{L^2}^2
    +\eps^{-1}\norm{Q_\eps U^\eps}_{\Hcal}^2
    \leq C-c\norm{U^\eps}_{L^1}.
\]
The Neumann Poincar\'e inequality gives
$\frac{d}{dt}\norm{U^\eps}_{\Hcal}^2\leq C_1-c_1\norm{U^\eps}_{\Hcal}$.
Thus the renormalized semiflows have a common bounded absorbing set in $\Hcal$.
Compact parabolic smoothing yields global attractors $\Acal_\eps$, verifying Assumption~\ref{ass:global-attractors} for this example.

\begin{proposition}[Renormalized inertial manifold]
\label{prop:coupled-AC-inertial-manifold}
For all sufficiently small $\eps>0$, system \eqref{eq:coupled-AC-renormalized} has an inertial manifold
\begin{equation}
    \Mcal_\eps=\{p+h_\eps(p):p\in P_\eps\Hcal\},
    \label{eq:coupled-AC-inertial-manifold}
\end{equation}
with
\begin{equation}
    \norm{h_\eps}_\infty\leq C\eps^2,\qquad \Lip(h_\eps)\leq C\eps,
    \label{eq:coupled-AC-graph-estimates}
\end{equation}
and $\dim P_\eps\Hcal=O(\eps^{-3})$.
\end{proposition}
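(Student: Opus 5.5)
The plan is to reduce everything to Theorem~\ref{thm:inertial-manifold}, so the work lies in checking its hypotheses for the prepared coupled Allen--Cahn system and then counting Neumann eigenvalues.

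First, Assumption~\ref{ass:operator} holds. The operator $A=\operatorname{diag}(-\Delta_N,-\Delta_N)$ on $L^2((0,\pi)^3;\R^2)$ is nonnegative and self-adjoint with compact resolvent. Its eigenfunctions are products of cosines times the vectors $e_1,e_2$, with eigenvalues $|k|^2$ for $k\in\N_0^3$, each of multiplicity two. Assumption~\ref{ass:nonlinearity} holds by \eqref{eq:coupled-AC-prepared-F}: $f$ is smooth, equals $f_0$ on a ball and $-z/\sqrt{1+|z|^2}$ outside a larger ball. Hence $f$ is bounded and globally Lipschitz on $\R^2$. The Nemytskii map then satisfies $\norm{F(U)}_\Hcal\le \sup|f|\,|\Omega|^{1/2}=:M$ and $\norm{F(U)-F(V)}_\Hcal\le \Lip(f)\norm{U-V}_\Hcal$, with $M,L$ independent of $\theta$ for $|\theta|\le\bar\theta$. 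Assumption~\ref{ass:spectral_shift} is a definition. Lemma~\ref{lem:created_gap} gives $\gamma_\eps\ge\eps^{-1}$, so $\gamma_\eps\ge 8L$ once $\eps\le (8L)^{-1}$.

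Second, Theorem~\ref{thm:inertial-manifold} applies for such $\eps$. It gives the inertial manifold \eqref{eq:coupled-AC-inertial-manifold} as a graph over $P_\eps\Hcal$, together with the bounds $\norm{h_\eps}_\infty\le M\eps^2/\mu$ and $\Lip(h_\eps)\le 4L\eps$ from \eqref{eq:graph-sup-bound}–\eqref{eq:graph-lip-bound}. These are \eqref{eq:coupled-AC-graph-estimates} with $C=\max\{M/\mu,4L\}$.

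Third, the dimension count. By construction, $\dim P_\eps\Hcal = 2\,\#\{k\in\N_0^3: |k|^2\le \mu\eps^{-2}\}$. Each such lattice point is the corner of a unit cube inside the ball of radius $\sqrt{\mu}\eps^{-1}+\sqrt3$, restricted to the closed positive octant. A comparison of unit cubes with the octant of that ball therefore gives
\[
\dim P_\eps\Hcal\le 2\cdot\tfrac{\pi}{6}\bigl(\sqrt{\mu}\,\eps^{-1}+\sqrt3\bigr)^3=O(\eps^{-3}).
\]
Equivalently, Weyl's law $N(\Lambda)\sim |\Omega|\Lambda^{3/2}/(6\pi^2)$ applied to each component yields the same order.

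The only point needing care is the uniformity of $M$ and $L$ in $\theta$, which is what makes the threshold on $\eps$ independent of $\theta$. This uniformity follows because $f$ depends smoothly on $\theta$ and is supported in a fixed compact set apart from the explicit bounded tail $-z/\sqrt{1+|z|^2}$. I expect no real obstacle.
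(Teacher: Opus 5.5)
Your proposal is correct and follows the paper's own proof. The paper likewise uses the created gap $\gamma_\eps\geq\eps^{-1}$ from Lemma~\ref{lem:created_gap} to invoke Theorem~\ref{thm:inertial-manifold} for small $\eps$, and obtains the dimension bound by counting $k\in\N_0^3$ with $|k|^2\leq\Lambda_\eps$ for each of the two components; the extra details you supply (the threshold $\eps\leq(8L)^{-1}$, the constant $C=\max\{M/\mu,4L\}$, and the unit-cube lattice-point bound) all check out.
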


\begin{proof}
The artificial gap satisfies $\gamma_\eps\geq\eps^{-1}$, so Theorem~\ref{thm:inertial-manifold} applies for small $\eps$.
Counting $k\in\N_0^3$ with $|k|^2\leq\Lambda_\eps$, for each component, gives the dimension estimate.
\end{proof}

For bounded $B\subset\Hcal$ and fixed $T>0$, we apply Theorem~\ref{thm:finite-semiconjugacy} and obtain
\begin{equation}
    \sup_{U_0\in B}\sup_{0\leq t\leq T}
    \norm{R_\eps S(t)U_0-S_\eps^\Mcal(t)R_\eps U_0}_{\Hcal}
    \leq C_{B,T}\eps^2.
    \label{eq:coupled-AC-finite-time-semiconjugacy}
\end{equation}
For fixed $\kappa\geq2$, set $\tau_\eps^\kappa=(\kappa/\mu)\eps^2|\log\eps|$.
Then, by Theorem~\ref{thm:post-layer} we also have ,for small $\eps$ with $\tau_\eps^\kappa\leq T$,
\begin{equation}
    \sup_{U_0\in B}\sup_{\tau_\eps^\kappa\leq t\leq T}
    \norm{S(t)U_0-S_\eps^\Mcal(t-\tau_\eps^\kappa)
          R_\eps S(\tau_\eps^\kappa)U_0}_{\Hcal}
    \leq C_{B,T,\kappa}\eps^2.
    \label{eq:coupled-AC-post-layer}
\end{equation}
Note that in both results we compare the renormalized equation  with the exact graph dynamics of the full PDE after the nonlinear cutoff.

\paragraph{Homogeneous equilibria.}
A constant equilibrium of the original system satisfies
\begin{equation}
    (\rho-b)u-u^3+bv=0,\qquad (\rho-b)v-v^3+bu=0.
    \label{eq:coupled-AC-homogeneous-equations}
\end{equation}

\begin{proposition}[Homogeneous equilibria]
\label{prop:coupled-AC-homogeneous-equilibria}
In the parameter regime \eqref{eq:parameter_regime}, the original nonlinear system and the one with the nonlinear cutoff have exactly five homogeneous equilibria,
\begin{equation}
    E_0=(0,0),\qquad E_{\rm s}^{\pm}=\pm\sqrt\rho(1,1),\qquad
    E_{\rm a}^{\pm}=\pm\sqrt{\rho-2b}(1,-1).
    \label{eq:coupled-AC-five-equilibria}
\end{equation}
All are hyperbolic for the full PDE, with Morse indices
\begin{equation}
    m(E_0)=5,\qquad m(E_{\rm a}^{\pm})=1,\qquad m(E_{\rm s}^{\pm})=0.
    \label{eq:coupled-AC-Morse-indices}
\end{equation}
\end{proposition}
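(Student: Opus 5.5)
The plan is to reduce everything to finite-dimensional linear algebra. Since the equilibria are spatially constant, the linearization is a constant-coefficient operator. It therefore diagonalizes in the Neumann cosine basis of $(0,\pi)^3$, tensored with the eigenvectors of the $2\times2$ reaction matrix.

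\emph{Step 1 (exactly five constant zeros).} Introduce $s=u+v$ and $d=u-v$. Adding and subtracting the two equations in \eqref{eq:coupled-AC-homogeneous-equations}, and using $u^3+v^3=\tfrac14 s(s^2+3d^2)$ and $u^3-v^3=\tfrac14 d(3s^2+d^2)$, gives
\[
    s\bigl(4\rho-s^2-3d^2\bigr)=0,\qquad d\bigl(4(\rho-2b)-3s^2-d^2\bigr)=0 .
\]
I then split into four cases.
\begin{itemize}
\item $s=d=0$ gives $E_0$.
\item $s=0$, $d\neq0$ gives $d^2=4(\rho-2b)$, i.e.\ $E_{\rm a}^\pm$. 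This is admissible because $\rho-2b=\tfrac15-\theta>0$.
\item $d=0$, $s\neq0$ gives $s^2=4\rho$, i.e.\ $E_{\rm s}^\pm$.
\item $s,d\neq0$: solving the linear system in $(s^2,d^2)$ gives $d^2=\rho+b$ and $s^2=\rho-3b=-\tfrac15-2\theta<0$. This is impossible for $|\theta|<\theta_*$.
\end{itemize}
For the prepared reaction $f$, all five points lie in the box $|u|,|v|<R$, where $f=f_0$. Suppose some other zero $z$ of $f$ existed, and set $a:=\max_i|z_i|\geq R$. Then $z$ lies on a face of the box of half-width $a$, and the face inequality $\operatorname{sgn}(z_i)f_i(z)\leq-c_R<0$ from the cutoff construction contradicts $f(z)=0$.

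\emph{Step 2 (spectra).} At a constant state $E$, the operator $\mathscr L=-A+DF_0(E)$ acts on $\cos(k\cdot x)\,e$, with $k\in\N_0^3$ and $e$ an eigenvector of $DF_0(E)$ with eigenvalue $m$. It produces the eigenvalue $m-|k|^2$. These functions form a complete basis, so the spectrum is exactly $\{m-\lambda:\ m\in\sigma(DF_0(E)),\ \lambda\in\{0,1,2,3,\dots\}\}$. The Neumann levels $\lambda=0,1,2$ have multiplicities $1,3,3$.

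Hyperbolicity therefore means that no $m$ is a Neumann eigenvalue. The Morse index is $\sum_m\#\{k:|k|^2<m\}$, where $m$ runs over the eigenvalues of $DF_0(E)$ counted with multiplicity. The three cases are as follows.
\begin{itemize}
\item $E_{\rm s}^\pm$: the diagonal entries are $-2\rho-b$, so the eigenvalues are $-2\rho$ and $-2\rho-2b$. Both are negative, hence $m=0$.
\item $E_{\rm a}^\pm$: the diagonal entries are $-2\rho+5b$. The synchronized eigenvalue is $-2\rho+6b=\tfrac25+4\theta\in(0,1)$, which contributes only $k=0$. The anti-synchronized eigenvalue is $-2\rho+4b=-\tfrac25+2\theta<0$. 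Hence $m=1$.
\item $E_0$: by \eqref{eq:coupled-AC-sync-antisync-eigenvalues}, the anti-synchronized eigenvalue $\tfrac15-\theta\in(0,1)$ contributes $1$. The synchronized eigenvalue $1+\theta$ contributes $1+3=4$ provided $1<1+\theta<2$.
\end{itemize}

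\emph{Main obstacle.} The delicate point is $E_0$. The count $m(E_0)=5$ and hyperbolicity both require $\rho=1+\theta$ to lie strictly between the Neumann levels $1$ and $2$, i.e.\ $\theta>0$. At $\theta=0$ the eigenvalue $\rho-1$ lies on the imaginary axis, and for $\theta<0$ one obtains index $2$. So I would verify the statement on the side $0<\theta<\theta_*$, which is where the remark about the synchronized mode having crossed $\lambda=1$ places us. I would flag explicitly that the full interval $(-\theta_*,\theta_*)$ must be restricted accordingly, since $\theta=0$ is precisely the bifurcation point.

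All other eigenvalues stay a distance of order one from $\N_0$ uniformly in small $|\theta|$, so hyperbolicity of $E_{\rm s}^\pm$ and $E_{\rm a}^\pm$ holds on the whole interval. The prepared system has the same linearizations at these points because $f=f_0$ near them.
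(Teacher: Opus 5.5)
Your argument is correct and is essentially the paper's. The sum/difference decomposition rules out non-symmetric constant zeros; the paper phrases this as $u^2+v^2=\rho-b=3/5$, $uv=-b$, contradicting $u^2+v^2\geq 2b$, and it handles the cutoff with $f(z)\cdot z<0$ for $|z|>R_0$ rather than your face inequality. The Morse indices then come from subtracting the Neumann levels $0,1,2,\dots$, with multiplicities $1,3,3,\dots$, from the reaction eigenvalues. One harmless slip: the Neumann eigenfunctions are $\prod_j\cos(k_jx_j)$, not $\cos(k\cdot x)$, though your eigenvalues and multiplicities are the correct ones.

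Your caveat about $E_0$ is also right, and it exposes a genuine flaw in the statement. The paper's proof asserts five positive and no vanishing growth rates at $E_0$ on all of $-\theta_*<\theta<\theta_*$. But the threefold growth rate $\theta$ vanishes at $\theta=0$ and is negative for $\theta<0$, where $m(E_0)=2$. So hyperbolicity of $E_0$ and $m(E_0)=5$ hold only for $0<\theta<\theta_*$, while the remaining claims hold on the full interval.
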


\begin{proof}
The cases $u=v$ and $u=-v$ give the displayed equilibria.
Otherwise, adding and subtracting \eqref{eq:coupled-AC-homogeneous-equations} gives
$u^2+v^2=\rho-b=3/5$ and $uv=-b$, contradicting $u^2+v^2\geq2b>4/5$.
The modification via the nonlinear cutoff adds no constant equilibria because $f(z)\cdot z<0$ for $|z|>R_0$ and it agrees with $f_0$ for $|z|\leq R_0$.
To be more precise, we have the following:
At $E_0$, the reaction eigenvalues in the synchronized and antisynchronized directions are $1+\theta$ and $1/5-\theta$, respectively.
At $E_{\rm a}^{\pm}$ they are $2/5+4\theta$ and $-2(1/5-\theta)$, while at $E_{\rm s}^{\pm}$ they are $-2\rho$ and $-2(\rho+b)$, in the same order.
We obtain the growth rates of the full PDE by subtracting the Neumann eigenvalues: $0$ is simple, $1$ has multiplicity three, and all remaining levels are at least $2$.
In our chosen parameter regime \eqref{eq:parameter_regime}, this gives five positive growth rates at $E_0$, one at each $E_{\rm a}^{\pm}$, and none at $E_{\rm s}^{\pm}$, proving \eqref{eq:coupled-AC-Morse-indices}.
Moreover, no growth rate vanishes, so all five equilibria are hyperbolic.
\end{proof}

\begin{remark}
Let us briefly explain why Laplacian on a three dimensional cube has no growing consecutive spectral gaps.
The distinct Neumann eigenvalues are the nonnegative integers representable as sums of three squares.
The missing integers have the form $4^a(8\ell+7)$ and hence residues $0$, $4$ or $7$ modulo $8$ \cite{HardyWright2008}.
Among any three consecutive integers at least one is represented, so adjacent distinct eigenvalues differ by at most $3$; multiplicities add only zero gaps.
Comparing nearby constant states at $E_{\rm s}^{\pm}$ gives
\[
    L\geq\norm{Df(E_{\rm s}^{\pm})}=2(\rho+b)>14/5.
\]
Every consecutive gap is therefore smaller than $2L$.
Both the classical $2L$ spectral gap criterion and the stronger condition used here fail at every interface for this operator.
\end{remark}

\paragraph{Uniform comparison near the synchronized sinks.}
At $E=E_{\rm s}^{\pm}$ the Jacobian of the reaction term is symmetric, with eigenvalues $-2\rho$ and $-2(\rho+b)$, so
\begin{equation}
    \sup\Re\sigma(\mathscr L_E)\leq-2\rho<0.
    \label{eq:coupled-AC-sink-spectral-margin}
\end{equation}
We then choose a small convex neighborhood $\Ucal_\pm\subset\R^2$ of $E$ on which
\begin{equation}
    (f(z_1)-f(z_2))\cdot(z_1-z_2)
    \leq-c_0|z_1-z_2|^2,
    \qquad z_1,z_2\in\Ucal_\pm.
    \label{eq:coupled-AC-local-monotonicity}
\end{equation}
Let $\widehat\Ucal_\pm$ denote the functions taking values in $\Ucal_\pm$ almost everywhere, and define
\begin{equation}
    V_\eps^\pm=\{p\in P_\eps\Hcal:p+h_\eps(p)\in\widehat\Ucal_\pm\}.
    \label{eq:stable-reduced-region}
\end{equation}
For $p_1,p_2\in V_\eps^\pm$, we set $d=p_1-p_2$, $k=h_\eps(p_1)-h_\eps(p_2)$, and $Z_i=p_i+h_\eps(p_i)$.
Orthogonality and $\ell_\eps:=\Lip(h_\eps)=O(\eps)$ give
\[
\begin{aligned}
    \langle P_\eps(F(Z_1)-F(Z_2)),d\rangle
    &\leq-c_0\norm{d+k}^2+L\norm{d+k}\norm{k}\leq\left(-c_0+L\ell_\eps\sqrt{1+\ell_\eps^2}\right)\norm{d}^2.
\end{aligned}
\]
Since $A\geq0$, Assumption~\ref{ass:local-reduced-contraction} holds on $V_\eps^\pm$ with $\sigma=c_0/2$ for small $\eps$.

It remains to verify entrance into these regions for both curves used in that assumption.
Fix a compact set $K\subset\Bcal(E):=\{U_0\in\Hcal:S(t)U_0\to E\}$ and $s\in(3/2,2)$.
The Neumann scale $\Hcal^s$ is an algebra and embeds into $L^\infty$; on the cube this follows by even periodic extension.
From the variation of constants formula we obtain
\[
    \sup_p\norm{h_\eps(p)}_s
    \leq C_sM\int_0^\infty t^{-s/2}\e^{-\lambda_\eps^+t/2}\,dt
    \leq C_s\eps^{2-s}.
\]
Moreover $Q_\eps E=0$, so $E$ is an equilibrium of the renormalized equation and $h_\eps(E)=0$.
Since $DF(E)$ commutes with $A_\eps$, we have
\[
    \norm{\e^{t(-A_\eps+DF(E))}}_{\mathcal L(\Hcal^s)}\leq\e^{-2\rho t}.
\]
Thus, the nonlinear remainder is bounded by $C\norm{U-E}_s^2$ in $\Hcal^s$ near $E$.
Variation of constants therefore gives a stable ball about $E$ in $\Hcal^s$, with radius independent of $\eps$, contained in $\widehat\Ucal_\pm$.
Note that the same ball construction applies to the original equation.

Compactness of $K$, smoothing and local stability give a common time $t_*$ after which $S(t)K$ stays in such a small ball.
Theorem~\ref{thm:finite-semiconjugacy} and the spectral inverse estimate yield
\[
    \sup_{U_0\in K}
    \norm{P_\eps S(t_*)U_0-\Phi_\eps(t_*)P_\eps U_0}_s
    \leq C_{K,t_*}(1+\Lambda_\eps)^{s/2}\eps^2
    \leq C_{K,t_*}\eps^{2-s}.
\]
Together with the graph bound, this places the reduced trajectories in the uniform stable ball at $t_*$.
They remain there, while $P_\eps S(t)U_0+h_\eps(P_\eps S(t)U_0)$ also stays in $\widehat\Ucal_\pm$ by the smaller-ball choice and the uniform graph bound.
Thus both retained curves lie in $V_\eps^\pm$ for $t\geq t_*$ and we can apply Theorem~\ref{thm:stable-region-semiconjugacy}, which yields
\[
    \sup_{U_0\in K}\sup_{t\geq0}
    \norm{R_\eps S(t)U_0-S_\eps^\Mcal(t)R_\eps U_0}_{\Hcal}
    \leq C_K\eps^2.
\]

\begin{remark}
Compactness inside a basin is a sufficient condition for a common entrance time.
Such a time need not exist for bounded sets approaching the basin boundary, where trajectories may remain near saddles for arbitrarily long intervals.
\end{remark}

\paragraph{Spatially heterogeneous equilibria.}
Here, we consider the invariant diagonal subspace
\begin{equation}
    \Dcal=\{(u,v)\in\Hcal:u=v\}
    \label{eq:coupled-AC-diagonal-subspace}
\end{equation}
Setting $u=v=w$ in the coupled Allen-Cahn equation yields
\begin{equation}
    \partial_tw=\Delta w+(1+\theta)w-w^3.
    \label{eq:coupled-AC-diagonal-Allen-Cahn}
\end{equation}
We further restrict to functions depending only on $x_1$, removing the multiplicity of the first spatial eigenspace.
The equation for equilibria in this case is
\begin{equation}
    \phi''+(1+\theta)\phi-\phi^3=0,
    \qquad\phi'(0)=\phi'(\pi)=0,
    \label{eq:coupled-AC-pattern-equation}
\end{equation}
where its operator domain is $H_N^2(0,\pi)=\{\phi\in H^2(0,\pi):\phi'(0)=\phi'(\pi)=0\}$.

\begin{proposition}[Small-amplitude patterned branch]
\label{prop:coupled-AC-pattern-branch}
For sufficiently small $0<\theta<\theta_*$ equation \eqref{eq:coupled-AC-pattern-equation} has two nonconstant solutions satisfying
\begin{equation}
    \phi_\theta^\pm(x)=\pm\frac{2}{\sqrt3}\sqrt\theta\cos x+O(\theta^{3/2})
    \label{eq:coupled-AC-pattern-expansion}
\end{equation}
in $H^2(0,\pi)$, and hence in $C^1([0,\pi])$.
Consequently,
\begin{equation}
    U_{\theta,1}^{\pm}(x)
    =(\phi_\theta^\pm(x_1),\phi_\theta^\pm(x_1))^T
    \label{eq:coupled-AC-pattern-x1}
\end{equation}
are equilibria of the full coupled system.
Permuting coordinates gives the $x_2$- and $x_3$-dependent branches.
\end{proposition}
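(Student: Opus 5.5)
The plan is a Crandall--Rabinowitz (or Lyapunov--Schmidt) bifurcation from a simple eigenvalue for the scalar Neumann problem \eqref{eq:coupled-AC-pattern-equation}, followed by an expansion of the bifurcating branch.

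Write $G(\phi,\theta):=\phi''+(1+\theta)\phi-\phi^3$, viewed as a map $H_N^2(0,\pi)\times\R\to L^2(0,\pi)$. It is smooth and satisfies $G(0,\theta)=0$. The linearization at the trivial state is $D_\phi G(0,\theta)=\partial_x^2+(1+\theta)$, and the Neumann eigenvalues of $-\partial_x^2$ on $(0,\pi)$ are $n^2$. Hence at $\theta=0$ the kernel is spanned by $\cos x$ and is simple; this is the reason for restricting to functions of $x_1$ alone. By self-adjointness the range is the $L^2$-orthogonal complement of $\cos x$. The transversality condition $D_\theta D_\phi G(0,0)\cos x=\cos x\notin\operatorname{Range}$ holds trivially.

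Crandall--Rabinowitz then gives a smooth curve $s\mapsto(\phi(s),\theta(s))$ with $\phi(s)=s\cos x+s^2\psi(s)$, where $\psi(s)\perp\cos x$, and $\theta(0)=0$. To fix the direction of the bifurcation, project the equation onto $\cos x$ with $\phi=s\cos x+O(s^3)$. This gives
\[
\theta(s)\int_0^\pi\cos^2x\,dx=s^2\int_0^\pi\cos^4x\,dx+O(s^4).
\]
Since $\int_0^\pi\cos^2=\pi/2$ and $\int_0^\pi\cos^4=3\pi/8$, we get $\theta=\tfrac34 s^2+O(s^4)$. The $s^2$ correction of $\phi$ vanishes by the odd symmetry $\phi\mapsto-\phi$, so $\phi=s\cos x+O(s^3)$ in $H^2$.

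The bifurcation is supercritical, and $\theta'(s)\neq0$ for $s\neq0$ small, so the equation can be inverted as $s=\pm\tfrac{2}{\sqrt3}\sqrt\theta+O(\theta^{3/2})$. This yields the two nonconstant solutions $\phi_\theta^\pm$ with the expansion \eqref{eq:coupled-AC-pattern-expansion} in $H^2(0,\pi)$. The odd symmetry gives $\phi_\theta^-=-\phi_\theta^+$. The $C^1$ statement follows from the one-dimensional Sobolev embedding $H^2\hookrightarrow C^1$.

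For uniqueness ("two") I would use the local uniqueness part of Crandall--Rabinowitz: near $(0,0)$ the zero set consists only of the trivial branch and this pitchfork. Any other nonconstant solution with $\theta$ small would either have large amplitude, and then converge as $\theta\to0$ to a solution at $\theta=0$ bifurcating from a higher mode $n\geq2$, which is impossible since $1+\theta<4$. Alternatively it would be small, and so lie on the local branch. I expect this global exclusion, done via phase-plane or energy arguments for the ODE, to be the main obstacle. If it proves delicate, I would read the statement as local and take "two" to mean the two small-amplitude solutions near $0$.

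Finally, $U=(\phi(x_1),\phi(x_1))$ lies in the invariant diagonal subspace $\Dcal$. Each component there satisfies \eqref{eq:coupled-AC-diagonal-Allen-Cahn}, because the coupling term $b(v-u)$ vanishes. The Neumann conditions on the cube faces hold since $\phi'(0)=\phi'(\pi)=0$ and the function is independent of $x_2,x_3$. The amplitude is $O(\sqrt\theta)<R$, so $U$ lies in $\Ucal$ and is also an equilibrium of the cut-off system. Permuting the coordinates gives the $x_2$- and $x_3$-branches.
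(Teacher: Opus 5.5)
Your proposal is correct and follows essentially the paper's route: the paper performs the Lyapunov--Schmidt reduction $\phi=a\cos x+\psi$ with $\psi\perp\cos x$ by hand, obtains the bifurcation equation $\theta a-\tfrac34a^3+O(a^5)=0$ from the same $\int\cos^4/\int\cos^2$ computation, and solves it by the implicit function theorem in $a^2$, which is what your Crandall--Rabinowitz curve plus projection onto $\cos x$ produces. The paper's ``two'' refers only to the two small-amplitude branches $a=\pm2\sqrt{\theta/3}+O(\theta^{3/2})$, so the global exclusion of other nonconstant solutions that you flag as the main obstacle is not needed.
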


\begin{proof}
Write $e(x)=\cos x$ and $\phi=ae+\psi$, with $\psi\perp e$.
The operator $\partial_x^2+1$ is invertible on $e^\perp$ with Neumann boundary conditions.
The complementary equation therefore gives a smooth map $\psi(a,\theta)=O(a^3)$ in $H^2$; the term $\theta ae$ has zero complementary projection.
Projection onto $e$ gives
\[
    0=\theta a-
      \frac{\int_0^\pi e^4\,dx}{\int_0^\pi e^2\,dx}a^3+O(a^5)
     =\theta a-\frac34a^3+O(a^5).
\]
Oddness of the equation and the implicit function theorem in $a^2$ yield
$a=\pm2\sqrt{\theta/3}+O(\theta^{3/2})$.
Together with $\psi=O(a^3)$, this proves the expansion.
\end{proof}

The full three-dimensional bifurcation may have additional branches.
The invariant one-coordinate subspaces suffice for the heterogeneous equilibria used here.

\paragraph{Persistence under the spectral renormalization.}
Now, we want to study how the different equilibria persist under our spectral renormalization method.
We observe that every homogeneous state lies in the zero eigenspace of the Neumann Laplacian, so
\begin{equation}
    Q_\eps E_*=0.
    \label{eq:coupled-AC-Q-constant}
\end{equation}

\begin{proposition}[Exact persistence of the homogeneous equilibria]
\label{prop:coupled-AC-exact-persistence}
The five states in \eqref{eq:coupled-AC-five-equilibria} remain equilibria for every $0<\eps\leq1$.
Whenever $\Lambda_\eps\geq1$, they are hyperbolic and their renormalized Morse indices satisfy
\begin{equation}
    m_\eps(E_0)=5,\qquad m_\eps(E_{\rm a}^{\pm})=1,\qquad m_\eps(E_{\rm s}^{\pm})=0.
    \label{eq:coupled-AC-renormalized-indices}
\end{equation}
\end{proposition}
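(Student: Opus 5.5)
Since $Q_\eps E_*=0$ for each of the five constant states by \eqref{eq:coupled-AC-Q-constant}, we have $A_\eps E_*=AE_*=0$. The constants take values in $[-R,R]^2$, where the prepared reaction $f$ agrees with $f_0$, so $F(E_*)=0$. Hence $A_\eps E_*-F(E_*)=0$ for every $0<\eps\leq1$, and each state remains an equilibrium of \eqref{eq:coupled-AC-renormalized}. This gives exact persistence, with no need for the fixed-point argument of Theorem~\ref{thm:equilibrium-persistence}.

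For hyperbolicity and the Morse index we avoid the perturbative route of Theorem~\ref{thm:morse-index-persistence}, which only applies for small $\eps$. Instead we compute the linearization explicitly, as in the proof of Proposition~\ref{prop:coupled-AC-homogeneous-equilibria}. At a constant state, $DF(E_*)$ is multiplication by the constant symmetric matrix $J_*=Df_0(E_*)$. It therefore commutes with $A$, with $Q_\eps$ and with $A_\eps$. Decompose $\Hcal$ into the tensor products of Neumann eigenfunctions $\phi_k$ (eigenvalue $\lambda_k$) with the eigenvectors $e_j$ of $J_*$ (eigenvalues $\sigma_j$, given in the proof of Proposition~\ref{prop:coupled-AC-homogeneous-equilibria}). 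Then $\mathscr L_*^\eps=-A_\eps+DF(E_*)$ is self-adjoint and diagonal in this basis, with eigenvalues
\[
    \sigma_j-\lambda_k \quad(\lambda_k\leq\Lambda_\eps),
    \qquad
    \sigma_j-\lambda_k-\eps^{-1}\quad(\lambda_k>\Lambda_\eps).
\]
Self-adjointness means algebraic and geometric multiplicities coincide, so the Morse index is just a count of positive eigenvalues.

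The key step is to show that, once $\Lambda_\eps\geq1$, all positive growth rates come from retained modes, so the count matches the original one. Since $\Lambda_\eps\geq 1$, every mode with $\lambda_k\in\{0,1\}$ is retained, and those modes have exactly the growth rates of the original problem. For a discarded mode we have $\lambda_k\geq2$, because the distinct Neumann levels on the cube are nonnegative integers and $Q_\eps$ only contains levels strictly above $\Lambda_\eps\geq1$. Its rate is therefore at most $\sigma_j-2-\eps^{-1}\leq \sigma_j-3$. The largest reaction eigenvalue across all five states is $1+\theta<3$, so every discarded rate is strictly negative, uniformly in $\eps\leq1$. The retained modes with $\lambda_k\geq2$ are handled exactly as in the original problem.

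Putting these together, the spectrum of $\mathscr L_*^\eps$ in the closed right half-plane coincides with that of the original $\mathscr L_*$. In particular no eigenvalue vanishes, the equilibria are hyperbolic, and the counts $5,1,0$ from Proposition~\ref{prop:coupled-AC-homogeneous-equilibria} carry over verbatim. The main point requiring care is confirming that in the parameter regime \eqref{eq:parameter_regime} the original rates $\sigma_j-\lambda_k$ with $\lambda_k\geq2$ are all negative. This holds because $\max_j\sigma_j\leq 1+\theta<2$ at $E_0$, and the reaction eigenvalues at $E_{\rm a}^{\pm}$ and $E_{\rm s}^{\pm}$ are even smaller. With this checked, the splitting at $\Lambda_\eps\geq1$ introduces no new unstable or neutral directions.
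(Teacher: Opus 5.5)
Your argument is correct and is essentially the paper's proof, with more detail written out. As in the paper, $Q_\eps E_*=0$ gives exact persistence, and the constant reaction matrix commutes with $Q_\eps$, so only discarded modes are shifted by $-\eps^{-1}$; every unstable mode has spatial eigenvalue $0$ or $1$ and is retained once $\Lambda_\eps\geq1$, while discarded modes ($\lambda_k\geq2$) are already strictly stable.

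One caveat comes from Proposition~\ref{prop:coupled-AC-homogeneous-equilibria}, not from your reduction. The synchronized growth rate at $E_0$ on the level $\lambda=1$ is $\sigma_{\rm s}-1=\theta$. So $m(E_0)=5$, the hyperbolicity of $E_0$, and your claim that no eigenvalue vanishes all actually require $0<\theta<\theta_*$. What your argument establishes for all $|\theta|<\theta_*$ is that the renormalized and original spectra agree in the closed right half-plane.
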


\begin{proof}
Equation \eqref{eq:coupled-AC-Q-constant} preserves the equilibria.
Their constant reaction matrices commute with $Q_\eps$, so the high-mode growth rates are shifted by $-\eps^{-1}$.
Every unstable mode in Proposition~\ref{prop:coupled-AC-homogeneous-equilibria} has spatial eigenvalue $0$ or $1$ and is retained when $\Lambda_\eps\geq1$.
All discarded modes are already strictly stable.
\end{proof}

For heterogeneous equilibria, the local hypotheses of Section~\ref{sec:equilibrium-persistence} hold because the modification via the smooth nonlinear cutoff satisfies
\[
    DF(U)V=Df(U(\cdot))V,
    \qquad
    \norm{DF(U)-DF(V)}_{\mathcal L(\Hcal)}
    \leq C\norm{U-V}_{L^\infty}\leq C\norm{U-V}_2,
\]
and $F:\Hcal^2\to\Hcal$ is continuously Fr\'echet differentiable.
Hence a nondegenerate equilibrium $U_*\in\Hcal^r$, for $r\geq2$, has a unique nearby continuation with
\begin{equation}
    \norm{U_*^\eps-U_*}_2\leq C_*\eps^{r-1}\norm{U_*}_r.
    \label{eq:coupled-AC-pattern-persistence}
\end{equation}

\begin{proposition}[Hyperbolicity and persistence of the patterned states]
\label{prop:coupled-AC-pattern-persistence}
After possibly decreasing $\theta_*$, the equilibria $U_{\theta,j}^{\pm}$, $j\in\{1,2,3\}$, are hyperbolic with Morse index $2$ for every $0<\theta<\theta_*$.
For each fixed such $\theta$ and sufficiently small $\eps$, their unique nearby renormalized continuations satisfy
\[
    \norm{U_{\theta,j}^{\pm,\eps}-U_{\theta,j}^{\pm}}_2
    \leq C_{\theta,r}\eps^{r-1}\norm{U_{\theta,j}^{\pm}}_r,
    \qquad r\geq2,
\]
and $m(U_{\theta,j}^{\pm,\eps})=2$.
In particular, for every fixed $N\in\N$ the error is bounded by $C_{\theta,N}\eps^N$.
\end{proposition}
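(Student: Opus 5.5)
The plan has two parts: first establish hyperbolicity and Morse index $2$ for the original patterned equilibria $U_{\theta,1}^{\pm}$ by a bifurcation analysis of the linearization, then transfer everything to the renormalized continuations using Theorems~\ref{thm:equilibrium-persistence} and~\ref{thm:morse-index-persistence}. By coordinate permutation symmetry it suffices to treat $j=1$. The linearization at $U=(\phi,\phi)^T$ with $\phi=\phi_\theta^\pm(x_1)$ is
\[
    \mathscr L V=\Delta V+\begin{pmatrix}\rho-b-3\phi^2&b\\ b&\rho-b-3\phi^2\end{pmatrix}V .
\]
Since the two diagonal entries coincide, the constant vectors $e_{\rm s},e_{\rm a}$ diagonalize this matrix pointwise. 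Hence $\mathscr L$ splits into two decoupled scalar self-adjoint Schr\"odinger operators,
\[
    \mathscr L_{\rm s}=\Delta+\rho-3\phi^2,
    \qquad
    \mathscr L_{\rm a}=\Delta+\rho-2b-3\phi^2 .
\]
Everything is self-adjoint, so hyperbolicity reduces to the absence of zero eigenvalues. The Morse index is then the total count of positive eigenvalues.

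For the antisynchronized operator I would use the bound $\phi^2=O(\theta)$ in $L^\infty$. By a perturbation argument the eigenvalues of $\mathscr L_{\rm a}$ lie within $O(\theta)$ of $-\lambda_k+1/5-\theta$, where $\lambda_k$ runs over the Neumann eigenvalues $0,1,1,1,2,\dots$. This gives exactly one positive eigenvalue, near $1/5$, and all others are at most $-4/5+O(\theta)$. For $\mathscr L_{\rm s}$ I would separate variables, $\psi(x_1)\,\cos(k_2x_2)\cos(k_3x_3)$, reducing to the one-dimensional operators $\ell_\theta-(k_2^2+k_3^2)$ with $\ell_\theta=\partial_{x_1}^2+1+\theta-3\phi^2$ on $(0,\pi)$ with Neumann conditions. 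Standard pitchfork exchange of stability would then give the spectrum of $\ell_\theta$:
\begin{itemize}
    \item the constant mode has eigenvalue $1+\theta-3\overline{\phi^2}+O(\theta^2)=1-\theta+O(\theta^2)>0$, since $\overline{\phi^2}=\tfrac23\theta$ (mean of $\tfrac43\theta\cos^2$);
    \item the $\cos x_1$ mode has eigenvalue $\theta-\tfrac94\theta+O(\theta^2)=-2\theta+O(\theta^2)<0$, obtainable by differentiating the reduced bifurcation equation in $a$, i.e.\ $\theta-\tfrac94a^2$;
    \item the higher modes have eigenvalues $\le -3+O(\theta)$.
\end{itemize}
Subtracting $k_2^2+k_3^2\ge1$ kills all transverse contributions: the constant $x_1$-mode with $k_2^2+k_3^2=1$ gives $-\theta+O(\theta^2)<0$, with multiplicity two. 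So $\mathscr L_{\rm s}$ has exactly one positive eigenvalue, and the total Morse index is $2$, with no zero eigenvalue.

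This first part is the main obstacle. The critical directions $(\cos x_1)e_{\rm s}$ and the transverse modes $\cos x_2\,e_{\rm s}$, $\cos x_3\,e_{\rm s}$ all have eigenvalues of size $O(\theta)$. Their signs must therefore be computed precisely to second order in the amplitude, using the expansion~\eqref{eq:coupled-AC-pattern-expansion} and Rayleigh--Schr\"odinger perturbation, with $\theta_*$ chosen small enough that the $O(\theta^2)$ remainders cannot flip them. For the transverse modes the factorized structure gives the exact value $\int(\theta-3\phi^2)\cos^2x_2\cdot\ldots$. Because the mean of $\phi^2$ is $\tfrac23\theta+O(\theta^2)$, the first-order eigenvalue shift there is $\theta-2\theta=-\theta$, which is cleanly negative.

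For the renormalized continuations, hyperbolicity implies nondegeneracy. Elliptic regularity for~\eqref{eq:coupled-AC-pattern-equation} with smooth $f$, together with cosine-series (even reflection) regularity, shows that $\phi_\theta^\pm$ is smooth and satisfies all Neumann compatibility conditions, so $U_{\theta,j}^\pm\in\Hcal^r$ for every $r$. The local hypotheses of Assumption~\ref{ass:smooth-F-dynamics} were verified just before the statement. Theorem~\ref{thm:equilibrium-persistence} then gives~\eqref{eq:coupled-AC-pattern-persistence}, and Theorem~\ref{thm:morse-index-persistence} gives $m(U_{\theta,j}^{\pm,\eps})=2$. Taking $r=N+1$ yields the $\eps^N$ bound.
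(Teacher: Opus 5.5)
Your proposal is correct and follows essentially the same route as the paper: split into the synchronized and anti-synchronized self-adjoint blocks, count eigenvalues perturbatively at first order (near zero $-2\theta$ and $-\theta$ twice, plus exactly one positive eigenvalue in each block), then apply Theorems~\ref{thm:equilibrium-persistence} and~\ref{thm:morse-index-persistence} with $r=N+1$. Your separation of variables in $x_2,x_3$ is only a tidier way of organizing the paper's compression $\operatorname{diag}(-2,-1,-1)$ of $1-4\cos^2x_1$ onto the kernel, and one small slip: $\theta-\tfrac94\theta$ should read $\theta-\tfrac94a^2$ evaluated at $a^2=\tfrac43\theta$, which gives the $-2\theta$ you state.
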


\begin{proof}
It suffices to consider $(\phi_\theta(x_1),\phi_\theta(x_1))$, with
$3\phi_\theta^2=4\theta\cos^2x_1+O(\theta^2)$ in $L^\infty$.
The linearization splits into the self-adjoint blocks
\[
\begin{aligned}
    \mathscr L_{{\rm s},\theta}
       &=\Delta+1+\theta(1-4\cos^2x_1)+O(\theta^2),\\
    \mathscr L_{{\rm a},\theta}
       &=\Delta+\tfrac15-\theta(1+4\cos^2x_1)+O(\theta^2).
\end{aligned}
\]
At $\theta=0$, the synchronized block has a simple positive eigenvalue $1$.
Its kernel is spanned by $\cos x_1$, $\cos x_2$ and $\cos x_3$, and its remaining spectrum lies in $(-\infty,-1]$.
Compression of $1-4\cos^2x_1$ to this kernel is $\operatorname{diag}(-2,-1,-1)$ in the normalized cosine basis.
Thus the three eigenvalues issuing from zero are $-2\theta+O(\theta^2)$ and $-\theta+O(\theta^2)$, twice, and this block retains exactly one positive eigenvalue.
The anti-synchronized block has one positive eigenvalue near $1/5$ and all others near $(-\infty,-4/5]$.
Both blocks are therefore hyperbolic for small positive $\theta$, giving Morse index $2$.
Sign and coordinate symmetries give the other cases.

Theorems~\ref{thm:equilibrium-persistence} and~\ref{thm:morse-index-persistence} now apply.
The even periodic extension of each Neumann profile is smooth, so it belongs to every $\Hcal^r$.
Taking $r=N+1$ gives the final estimate.
\end{proof}

\begin{proposition}[Persistence of the pattern-forming pitchfork]
\label{prop:coupled-AC-pitchfork-persistence}
For all sufficiently small $\eps>0$, the renormalized equation has a supercritical pitchfork at $\theta=0$ in each invariant synchronized one-coordinate subspace.
The patterned branches satisfy
\[
    \phi_{\theta,\eps}^{\pm}(x)
    =\pm\frac{2}{\sqrt3}\sqrt\theta\cos x+O(\theta^{3/2})
\]
in $H^2(0,\pi)$, uniformly for small $\eps$.
For each fixed $0<\theta<\theta_*$ and sufficiently small $\eps$, these branches agree with the continuations in Proposition~\ref{prop:coupled-AC-pattern-persistence}.
\end{proposition}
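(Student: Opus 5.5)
The plan is to redo the Lyapunov--Schmidt reduction of Proposition~\ref{prop:coupled-AC-pattern-branch} for the renormalized operator inside the invariant synchronized one-coordinate subspace. The key structural fact is that $A$, $P_\eps$ and $Q_\eps$ all preserve the subspace of functions $(w(x_1),w(x_1))$. In this subspace the spectrum of $-\Delta_N$ is $\{n^2\}$ with eigenfunctions $\cos(nx_1)$. Consequently $Q_\eps$ acts as the one-dimensional Fourier projection onto the modes with $n^2>\Lambda_\eps$, and the equilibrium equation becomes
\[
    \phi''+(1+\theta)\phi-\phi^3-\eps^{-1}Q_\eps\phi=0
\]
on $H_N^2(0,\pi)$. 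Invariance of this subspace under the renormalized flow follows from that commutation together with the invariance of the diagonal for the reaction.

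First I fix $\eps$ with $\Lambda_\eps\geq1$, so the critical mode $e=\cos x$ is retained: $Q_\eps e=0$. I then split $\phi=ae+\psi$ with $\psi\perp e$. The complementary linear operator is $\mathcal L_\eps=\partial_x^2+1-\eps^{-1}Q_\eps$ on $e^\perp$. It is diagonal in the cosine basis with entries $1-n^2-\eps^{-1}\mathbf 1_{n^2>\Lambda_\eps}$, $n\neq1$, so every entry has modulus at least $\min\{1,3\}$ and grows like $n^2$. Hence $\norm{\mathcal L_\eps^{-1}}_{\mathcal L(L^2,H^2)}\leq C$ uniformly in $\eps$, which is the key uniform estimate. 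It is the main point to check carefully, because the shift enters only with a favourable sign and only enlarges the diagonal entries.

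The implicit function theorem then gives $\psi=\psi_\eps(a,\theta)$ with $\norm{\psi_\eps}_{H^2}\leq Ca^3$, with constants uniform in $\eps$. Uniformity needs care, since $\eps$ is not a smooth parameter. I would apply the contraction-mapping form of the argument on a ball of radius $Ca^3$, using only the uniform inverse bound and the fact that the cubic Nemytskii map on $H^2(0,\pi)$ is Lipschitz with constant $O(a^2)$. The term $\theta ae$ projects to zero on $e^\perp$, and $\theta\psi$ is absorbed since $\theta$ is small.

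Next I project onto $e$. Because $Q_\eps$ is self-adjoint and $Q_\eps e=0$, the shift term drops out: $\langle Q_\eps\psi,e\rangle=0$. The bifurcation equation is therefore
\[
    0=\theta a-\tfrac34a^3+g_\eps(a,\theta),\qquad |g_\eps|\leq Ca^5,
\]
with odd symmetry $\psi_\eps(-a)=-\psi_\eps(a)$ inherited from the equation. Dividing by $a$ and solving in $a^2$, I use the contraction form again, since the derivative bound on $g_\eps/a$ is uniform through the uniform bounds of the reduction. This gives the unique nonzero solutions $a=\pm2\sqrt{\theta/3}+O(\theta^{3/2})$ for $\theta>0$, none for $\theta<0$, and the trivial branch. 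That is a supercritical pitchfork at $\theta=0$ in each coordinate subspace, with the claimed expansion in $H^2$ uniformly in $\eps$; permuting coordinates handles $x_2,x_3$.

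Finally, to identify the branches with the continuations of Proposition~\ref{prop:coupled-AC-pattern-persistence}, I fix $\theta$. The reduced solutions at $\eps$ and at the unrenormalized problem differ in $H^2$ by $O(\eps^{r-1})$, and in particular both lie in the neighbourhood $U_*$ of Theorem~\ref{thm:equilibrium-persistence} for $\eps$ small. The uniqueness clause of that theorem then forces the two to coincide. Alternatively, it suffices that $\phi^\pm_{\theta,\eps}$ lies within the uniqueness ball, which follows from both branches being $O(\theta^{3/2})$-close to the same leading profile only if that ball is large relative to $\theta^{3/2}$. This is the delicate point, so I would instead compare the two reduced problems directly via $\eps^{-1}\norm{Q_\eps\phi}\to0$ on smooth profiles.
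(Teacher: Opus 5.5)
Your proposal is correct and follows essentially the paper's route: Lyapunov--Schmidt reduction in the synchronized $x_1$-subspace, an $\eps$-uniform bound on the complementary inverse read off from the diagonal entries $1-n^2-\eps^{-1}\mathbf 1_{\{n^2>\Lambda_\eps\}}$, the bifurcation equation $\theta a-\frac34a^3+O(a^5)=0$ with the shift dropping out because $Q_\eps\cos x=0$, and identification with the continuations via $H^2$-convergence plus local uniqueness. For that last step, drop your first argument (the $O(\eps^{r-1})$ rate belongs to the continuation $U_*^\eps$, so invoking it for $\phi_{\theta,\eps}^\pm$ is circular) and keep the direct comparison, which the paper carries out via $\norm{\eps^{-1}Q_\eps^{(1)}}_{\mathcal L(H_N^2,L^2)}=O(\eps)$; this gives convergence of the complementary inverses and hence of the branches in $H^2$.
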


\begin{proof}
Near zero, the synchronized one-coordinate stationary equation is
\[
    \phi''+(1+\theta)\phi-\phi^3-\eps^{-1}Q_\eps^{(1)}\phi=0.
\]
Here $Q_\eps^{(1)}$ projects onto the Neumann modes $\cos(nx)$ with $n^2>\Lambda_\eps$.
If $\Lambda_\eps\geq1$, its linearization at $\theta=0$ still has kernel $\operatorname{span}\{\cos x\}$.
On the orthogonal complement, the eigenvalues are $1$ for the constant mode and
$1-n^2-\eps^{-1}\mathbf1_{\{n^2>\Lambda_\eps\}}$ for $n\geq2$.
The inverse from $L^2$ to $H_N^2$ is therefore bounded uniformly in $\eps$.
The complementary equation gives $\psi_\eps(a,\theta)=O(a^3)$ uniformly, and projection onto $\cos x$ yields
\[
    \theta a-\frac34a^3+O(a^5)=0.
\]
This proves the uniform pitchfork expansion.
For fixed $\theta>0$, convergence of the complementary inverses in $\mathcal L(L^2,H_N^2)$ follows from
$\norm{\eps^{-1}Q_\eps^{(1)}}_{\mathcal L(H_N^2,L^2)}=O(\eps)$.
The resulting branches converge in $H^2$ to the original profiles, so local uniqueness identifies them with the hyperbolic continuations.
\end{proof}

\section{Embedding into a two-parameter fast-slow PDE system}
\label{sec:fast-slow}

In this section we apply the spectral renormalization to a fast-slow system, where we do not have to make any assumptions about the spectrum of the slow operator.
This is an important observation as it now allows us to consider systems that were out of reach previously.

The two parameters of the system are time scale separation parameter $\delta$ and the spectral renormalization parameter $\eps$, which are independent elements of $(0,1]$.

Let $\Xcal$ and $\Hcal$ be real Hilbert spaces.
\begin{assumption}[Fast operator]
\label{ass:fast_operator}
The operator $A_f:\dom(A_f)\subset\Xcal\to\Xcal$ is self-adjoint, positive and satisfies $A_f\geq\omega_f I$ for some $\omega_f>0$.
In particular,
\begin{equation}
    \norm{\e^{-tA_f}}_{\mathcal L(\Xcal)}\leq\e^{-\omega_f t},\qquad t\geq0.
    \label{eq:B-fast-semigroup}
\end{equation}
\end{assumption}
In this section we relabel the operator \(A\) of Assumption \ref{ass:operator} as \(A_s\), to indicate the slow operator and to distinguish it from the fast operator \(A_f\).

We consider the fast-slow system
\begin{equation}
\begin{aligned}
    \partial_t u^{\del}+\del^{-1}A_f u^{\del}
        &=f(u^{\del},v^{\del}),\\
    \partial_t v^{\del}+A_sv^{\del}
        &=g(u^{\del},v^{\del}),
\end{aligned}
\label{eq:original-fast-slow}
\end{equation}
and its spectrally renormalized counterpart
\begin{equation}
\begin{aligned}
    \partial_t u^{\del,\eps}+\del^{-1} A_f u^{\del,\eps}
        &=f(u^{\del,\eps},v^{\del,\eps}),\\
    \partial_t v^{\del,\eps}+A_{s,\eps}v^{\del,\eps}
        &=g(u^{\del,\eps},v^{\del,\eps}),
\end{aligned}
\label{eq:renormalised-fast-slow}
\end{equation}
where $A_{s,\eps}=A_s+\eps^{-1}Q_\eps$ is the operator from equation \eqref{eq:renormalized_operator}.  
We use the product norm
\begin{equation*}
    \norm{(u,v)}_{\Xcal\times\Hcal}
    :=\norm{u}_{\Xcal}+\norm{v}_{\Hcal}.
\end{equation*}

\begin{assumption}[Nonlinearities]
\label{ass:coupling}
The maps $f:\Xcal\times\Hcal\to\Xcal$ and $g:\Xcal\times\Hcal\to\Hcal$ are globally bounded and globally Lipschitz in the above product norm.
Set
\[
    M_f=\sup_{u,v}\norm{f(u,v)}_{\Xcal},\qquad
    M_g=\sup_{u,v}\norm{g(u,v)}_{\Hcal},\qquad
    L_*=\max\{\Lip(f),\Lip(g)\}.
\]
\end{assumption}

We write
\[
    v=p+q,
    \quad \text{where}\quad 
    p=P_\eps v
    \quad \text{and}\quad 
    q=Q_\eps v.
\]
The fast variables of the system are $(u,q)$, while the retained slow component is $p$ and which will parametrize the slow manifold introduced later.  
Then, we define
\begin{equation}
    a_{\del,\eps}
    :=\min\set{\omega_f\del^{-1},\alpha_\eps},
    \qquad
    G_{\del,\eps}
    :=a_{\del,\eps}-\beta_\eps.
    \label{eq:combined-stable-rate}
\end{equation}
The quantity $G_{\del,\eps}$ is the separation between the slowest
part of the fast modes and the fastest retained slow mode.

\subsection{The renormalized slow manifold}

\begin{theorem}[Two-parameter slow manifold]
\label{thm:two-parameter-manifold}
Assume Assumptions \ref{ass:operator}, \ref{ass:spectral_shift}, \ref{ass:fast_operator} and \ref{ass:coupling}.  
Suppose that
\begin{equation}
    G_{\del,\eps}>0
    \quad\text{and}\quad
    K_{\del,\eps}
    :=\frac{6L_*}{G_{\del,\eps}}<1.
    \label{eq:two-parameter-gap}
\end{equation}
Then, the renormalized system \eqref{eq:renormalised-fast-slow} possesses a finite-dimensional invariant Lipschitz graph, called the slow manifold, given by
\begin{equation}
    \Mcal_{\del,\eps}
    :=\set{
      J_{\del,\eps}(p):
      p\in P_\eps\Hcal
    },
    \label{eq:two-parameter-graph}
\end{equation}
where
\begin{equation}
    J_{\del,\eps}(p)
    :=\left(
       h^u_{\del,\eps}(p),
       p+h^q_{\del,\eps}(p)
    \right)
    \label{eq:two-parameter-embedding}
\end{equation}
and
\begin{equation}
    h^u_{\del,\eps}:P_\eps\Hcal\to\Xcal,
    \qquad
    h^q_{\del,\eps}:P_\eps\Hcal\to Q_\eps\Hcal.
\end{equation}
The graph components satisfy
\begin{equation}
    \norm{h^u_{\del,\eps}}_{L^\infty}
    \leq\frac{M_f\del}{\omega_f},
    \qquad
    \norm{h^q_{\del,\eps}}_{L^\infty}
    \leq\frac{M_g}{\alpha_\eps}
    \leq\frac{M_g\eps^2}{\mu+\eps}.
    \label{eq:two-parameter-graph-size}
\end{equation}
Moreover, 
\begin{equation}
    \Lip(h^u_{\del,\eps})
    +\Lip(h^q_{\del,\eps})
    \leq
    \frac{4L_*}{(1-K_{\del,\eps})G_{\del,\eps}}.
    \label{eq:two-parameter-graph-lipschitz}
\end{equation}
Define the associated graph retraction by
\begin{equation}
    \mathcal R_{\del,\eps}(u,v)
    :=J_{\del,\eps}(P_\eps v).
    \label{eq:two-parameter-retraction}
\end{equation}
Then, the dynamics on the slow manifold $\Mcal_{\del,\eps}$ are conjugate through $J_{\del,\eps}$ to the finite-dimensional equation
\begin{equation}
    \dot p+A_sp
    =P_\eps g\left(
      h^u_{\del,\eps}(p),
      p+h^q_{\del,\eps}(p)
    \right).
    \label{eq:two-parameter-reduced}
\end{equation}

\end{theorem}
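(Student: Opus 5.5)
The plan is a Lyapunov--Perron construction on the product space, treating $(u,q)\in\Xcal\times Q_\eps\Hcal$ as the stable (fast) block and $p\in P_\eps\Hcal$ as the finite-dimensional neutral/unstable block. By Assumption~\ref{ass:fast_operator} and Proposition~\ref{prop:tail-dynamics}, the fast block has the forward decay
\[
\norm{\e^{-t\del^{-1}A_f}}\le\e^{-\omega_f\del^{-1}t},\qquad \norm{\e^{-tA_{s,\eps}}Q_\eps}\le\e^{-\alpha_\eps t},
\]
so both components decay at least at rate $a_{\del,\eps}$. Backward in time, \eqref{eq:low-backward} gives growth at most $\e^{\beta_\eps|t|}$ on $P_\eps\Hcal$.

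Fix the weight $\eta:=(a_{\del,\eps}+\beta_\eps)/2$, the midpoint of the gap, and work in the Banach space $C_\eta$ of continuous functions $Y=(u,p,q)$ on $(-\infty,0]$ with finite norm $\sup_{t\le0}\e^{\eta t}\norm{Y(t)}$. For each $p_0\in P_\eps\Hcal$, define $\mathcal T_{p_0}$ by the backward variation-of-constants formulas
\[
\begin{aligned}
u(t)&=\int_{-\infty}^t\e^{-(t-s)\del^{-1}A_f}f(u,p+q)\,ds,\\
q(t)&=\int_{-\infty}^t\e^{-(t-s)A_{s,\eps}}Q_\eps g(u,p+q)\,ds,\\
p(t)&=\e^{-tA_s}p_0-\int_t^0\e^{-(t-s)A_s}P_\eps g(u,p+q)\,ds.
\end{aligned}
\]

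The first step is to show that $\mathcal T_{p_0}$ is a contraction. The integrals against the weight produce $1/(\omega_f\del^{-1}-\eta)$ and $1/(\alpha_\eps-\eta)$ for the fast parts and $1/(\eta-\beta_\eps)$ for the low part, and each is bounded by $2/G_{\del,\eps}$. With the sum norm and $L_*$ bounding both Lipschitz constants, summing the three contributions yields a contraction constant of order $6L_*/G_{\del,\eps}=K_{\del,\eps}<1$. The main delicacy of the whole proof is this bookkeeping in the product norm: the fast and slow blocks have different rates, and the constant has to come out as exactly $6L_*/G$. I would handle it by bounding each block by the worse rate $a_{\del,\eps}$, which is precisely why $G_{\del,\eps}$ is defined through the minimum.

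Next, define the graph from the fixed point $Y^*(\cdot;p_0)$ by $h^u(p_0)=u^*(0)$ and $h^q(p_0)=q^*(0)$. The sup bounds \eqref{eq:two-parameter-graph-size} come directly from the $u$ and $q$ formulas at $t=0$, using only the bounds $M_f$ and $M_g$: one gets $\int_0^\infty\e^{-\omega_f s/\del}M_f\,ds=M_f\del/\omega_f$ and $M_g/\alpha_\eps$, and the latter is bounded via \eqref{eq:beta_alpha_bounds}. For the Lipschitz bound, $p_0\mapsto\e^{-tA_s}p_0$ has $C_\eta$-norm at most $\norm{p_0}$ because $\eta\ge\beta_\eps$. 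The fixed-point perturbation estimate then gives $\norm{Y^*(p_0)-Y^*(p_0')}_{C_\eta}\le(1-K)^{-1}\norm{p_0-p_0'}$, and applying the $u,q$ estimates once more yields $\Lip(h^u)+\Lip(h^q)\le 4L_*/((1-K)G)$, which is \eqref{eq:two-parameter-graph-lipschitz}.

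Finally, invariance and the reduced equation follow by the standard arguments. Backward invariance holds because a solution in $C_\eta$ shifted in time is again such a solution. Forward invariance uses the characterization of the graph as the set of initial data admitting a backward solution with growth $O(\e^{-\eta t})$, together with the uniqueness of that solution from the contraction; since $P_\eps\Hcal$ is finite-dimensional, the reduced ODE is globally well posed. On the graph one has $v=p+h^q(p)$ and $u=h^u(p)$, and projecting the $v$-equation with $P_\eps$ gives \eqref{eq:two-parameter-reduced}, with conjugacy through $J_{\del,\eps}$. That $\mathcal R_{\del,\eps}$ is a retraction is immediate from $P_\eps J_{\del,\eps}=\Id$.
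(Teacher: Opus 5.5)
Your proposal is correct and takes the same route as the paper. The paper's proof simply defers to the classical Lyapunov--Perron inertial/slow-manifold constructions (Theorem~\ref{thm:inertial-manifold} and the cited Hummel--Kuehn and Kuehn--Sulzbach results), which you carry out explicitly. Your midpoint weight $\eta=(a_{\del,\eps}+\beta_\eps)/2$ with the three-block sum norm recovers exactly the stated constants $K_{\del,\eps}=6L_*/G_{\del,\eps}$ and $4L_*/\bigl((1-K_{\del,\eps})G_{\del,\eps}\bigr)$.
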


\begin{proof}
The argument follows the proof of Theorem \ref{thm:inertial-manifold} and the slow-manifold constructions in \cite[Prop. 5.2 - Prop. 5.8]{HummelKuehn2022} and \cite[Thm. 4.1]{KuehnSulzbach2025}.
\end{proof}

Write $\mathbb S_\del(t)$ and $\mathbb S_{\del,\eps}(t)$ for the original and renormalized semiflows, and set
\[
    \Phi_{\del,\eps}(t)=\mathbb S_{\del,\eps}(t)\big|_{\Mcal_{\del,\eps}}.
\]

\begin{remark}
\label{cor:joint-scaling}
If $0<\del\leq\vartheta\eps^2$ with $0<\vartheta<\omega_f/\mu$, then the spectral gap in the renormalized system satisfies
\[
    G_{\del,\eps}\geq c_\vartheta\eps^{-1},\quad \text{where}\quad 
    c_\vartheta=\min\{1,\omega_f/\vartheta-\mu\}>0.
\]
Indeed, the physical separation is at least $(\omega_f/\vartheta-\mu)\eps^{-2}$ and the shifted slow separation is at least $\eps^{-1}$.
Thus \eqref{eq:two-parameter-gap} holds for small $\eps$, with graph height $O(\del+\eps^2)$ and slope $O(\eps)$ along this parameter family.
\end{remark}

\subsection{Effect of the spectral renormalization}

Before proving the post-initial-layer reduction, we record the coupled analogue of the approximate semi-conjugacy comparison mechanism used in Section \ref{sec:post-layer}.

\begin{proposition}
\label{prop:coupled-full-flow-comparison}
Assume Assumptions~\ref{ass:operator}, \ref{ass:spectral_shift}, \ref{ass:fast_operator} and~\ref{ass:coupling}.
For $T>0$, set
\begin{equation}
    C_T^{\rm flow}=\e^{2L_*T}.
    \label{eq:full-flow-comparison-constant}
\end{equation}
For every $z_0=(u_0,v_0)\in\Xcal\times\Hcal$ and $0<\del,\eps\leq1$,
\begin{equation}
    \sup_{0\leq t\leq T}
    \norm{\mathbb S_\del(t)z_0-\mathbb S_{\del,\eps}(t)z_0}_{\Xcal\times\Hcal}
    \leq\frac{C_T^{\rm flow}}{\eps\lambda_\eps^+}
       \left(\norm{Q_\eps v_0}_{\Hcal}+\frac{M_g}{\alpha_\eps}\right).
    \label{eq:coupled-full-flow-comparison}
\end{equation}
\end{proposition}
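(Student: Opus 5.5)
We compare the two full flows directly from their mild formulations, starting from the same initial datum $z_0=(u_0,v_0)$. Write $z(t)=(u,v)=\mathbb S_\del(t)z_0$ and $z^\eps(t)=(u^\eps,v^\eps)=\mathbb S_{\del,\eps}(t)z_0$, and set $e_u=u-u^\eps$, $e_v=v-v^\eps$. The fast equations share the same linear part $\del^{-1}A_f$. Assumption~\ref{ass:fast_operator} then gives
\[
\norm{e_u(t)}_{\Xcal}\leq\int_0^t\e^{-\omega_f(t-s)/\del}L_*\norm{z(s)-z^\eps(s)}\,\mathrm ds\leq L_*\int_0^t\norm{z(s)-z^\eps(s)}\,\mathrm ds.
\]
For the slow component we write the original equation as $\partial_tv+A_{s,\eps}v=g(u,v)+\eps^{-1}Q_\eps v$. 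This treats the removed damping as a forcing term of the renormalized linear evolution. By Duhamel with the contraction semigroup $\e^{-tA_{s,\eps}}$,
\[
\norm{e_v(t)}_{\Hcal}\leq L_*\int_0^t\norm{z(s)-z^\eps(s)}\,\mathrm ds+\eps^{-1}\Bigl\|\int_0^t\e^{-(t-s)A_{s,\eps}}Q_\eps v(s)\,\mathrm ds\Bigr\|_{\Hcal}.
\]

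The key step is to bound the forcing term by an $O((\eps\lambda_\eps^+)^{-1})$ quantity rather than by $\eps^{-1}t$. Since $Q_\eps$ commutes with $\e^{-tA_{s,\eps}}$, the estimate \eqref{eq:high-decay} gives the factor $\e^{-\alpha_\eps(t-s)}$. The coupled analogue of \eqref{eq:original-tail-dynamic} gives $\norm{Q_\eps v(s)}\leq\e^{-\lambda_\eps^+s}\norm{Q_\eps v_0}+M_g/\lambda_\eps^+$; its proof is verbatim, since $g$ is bounded by $M_g$. Integrating,
\[
\eps^{-1}\int_0^t\e^{-\alpha_\eps(t-s)}\Bigl(\e^{-\lambda_\eps^+s}\norm{Q_\eps v_0}+\tfrac{M_g}{\lambda_\eps^+}\Bigr)\mathrm ds\leq\frac{1}{\eps}\Bigl(\frac{\norm{Q_\eps v_0}}{\alpha_\eps}+\frac{M_g}{\lambda_\eps^+\alpha_\eps}\Bigr)\leq\frac{1}{\eps\lambda_\eps^+}\Bigl(\norm{Q_\eps v_0}+\frac{M_g}{\alpha_\eps}\Bigr),
\]
using $\int_0^t\e^{-\alpha(t-s)}\e^{-\lambda s}\,\mathrm ds\leq1/\alpha$ and $\alpha_\eps\geq\lambda_\eps^+$. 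Denote the right-hand side by $\Delta_\eps$.

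Adding the two bounds, $E(t):=\norm{z(t)-z^\eps(t)}_{\Xcal\times\Hcal}$ satisfies $E(t)\leq\Delta_\eps+2L_*\int_0^tE(s)\,\mathrm ds$. Gronwall's inequality then yields $E(t)\leq\e^{2L_*t}\Delta_\eps\leq C_T^{\rm flow}\Delta_\eps$ on $[0,T]$, which is \eqref{eq:coupled-full-flow-comparison}.

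The main obstacle is keeping the artificial damping $\eps^{-1}Q_\eps$ from producing an $\eps^{-1}T$ growth. This is handled by placing it on the renormalized side as a forcing, so that the fast decay $\e^{-\alpha_\eps(t-s)}$ absorbs one factor of $\alpha_\eps^{-1}$. The Gronwall step is routine, but we must make sure the forcing bound is uniform in $t$ so that it enters as a constant. We should also check that mild solutions justify Duhamel's formula; the global Lipschitz assumptions guarantee this.
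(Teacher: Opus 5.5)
Your argument is correct and follows the same strategy as the paper: Duhamel on both components with contractive semigroups, treating the artificial damping $\eps^{-1}Q_\eps$ as a forcing whose size is absorbed by a high-mode decay kernel, and closing with Gronwall at rate $2L_*$. The only difference is a mirror choice of reference generator. The paper uses the original block generator $\operatorname{diag}(\del^{-1}A_f,A_s)$ with forcing $\eps^{-1}Q_\eps v^{\del,\eps}$; its uniform bound $\norm{Q_\eps v_0}_{\Hcal}+M_g/\alpha_\eps$ together with the kernel $\e^{-\lambda_\eps^+(t-s)}$ gives the stated bracket directly. You instead use $A_{s,\eps}$ with forcing $\eps^{-1}Q_\eps v$, which yields the slightly sharper intermediate bound $\eps^{-1}\alpha_\eps^{-1}\bigl(\norm{Q_\eps v_0}_{\Hcal}+M_g/\lambda_\eps^+\bigr)$, and then invoke $\alpha_\eps\geq\lambda_\eps^+$ to reach the same estimate.
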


\begin{proof}
Let $\widetilde q(t)=Q_\eps v^{\del,\eps}(t)$.
Variation of constants gives
\begin{equation}
    \sup_{t\geq0}\norm{\widetilde q(t)}_{\Hcal}
    \leq\norm{Q_\eps v_0}_{\Hcal}+\frac{M_g}{\alpha_\eps}.
    \label{eq:renormalized-q-bound-coupled}
\end{equation}
Write the difference using the original block generator $\operatorname{diag}(\del^{-1}A_f,A_s)$.
Its semigroup is contractive, and the additional forcing is $(0,\eps^{-1}\widetilde q)$ in the high slow modes.
Thus, for $D(t)=\norm{\mathbb S_\del(t)z_0-\mathbb S_{\del,\eps}(t)z_0}_{\Xcal\times\Hcal}$,
\[
\begin{aligned}
    D(t)&\leq2L_*\int_0^tD(s)\,ds
       +\eps^{-1}\int_0^t\e^{-\lambda_\eps^+(t-s)}\norm{\widetilde q(s)}\,ds\\
    &\leq2L_*\int_0^tD(s)\,ds
       +\frac1{\eps\lambda_\eps^+}
          \left(\norm{Q_\eps v_0}+\frac{M_g}{\alpha_\eps}\right).
\end{aligned}
\]
Gronwall's inequality proves the claim.
\end{proof}
The retraction \eqref{eq:two-parameter-retraction} preserves the retained slow coordinate exactly and replaces the two stable components by their graph values.

For $\kappa>0$ define the two initial-layer scales
\begin{equation}
    \tau_{\del,\eps}^{\kappa}
    :=\max\left\{
      \frac{\kappa\del}{\omega_f}\abs{\log\del},
      \frac{\kappa}{\mu}\eps^2\abs{\log\eps}
    \right\}.
    \label{eq:two-layer-time}
\end{equation}
The first term damps the physical fast variable and the second one is
exactly the high-mode layer from \eqref{eq:layer-time}.

\begin{theorem}[Two-parameter post-initial-layer shadowing]
\label{thm:two-parameter-shadowing}
Assume the hypotheses of Theorem \ref{thm:two-parameter-manifold}.
Let $\Bcal\subset\Xcal\times\Hcal$ be bounded and set
\begin{equation}
    R_u:=\sup_{(u_0,v_0)\in\Bcal}\norm{u_0}_{\Xcal},
    \qquad
    R_v:=\sup_{(u_0,v_0)\in\Bcal}\norm{v_0}_{\Hcal}.
    \label{eq:two-parameter-radii}
\end{equation}
Then, for every $T>0$ and $\kappa>0$, all sufficiently small
parameters for which $\tau_{\del,\eps}^{\kappa}<T$ satisfy
\begin{equation}
\begin{aligned}
    &\sup_{z_0\in\Bcal}
      \sup_{\tau_{\del,\eps}^{\kappa}\leq t\leq T}
      \norm{
       \mathbb S_\del(t)z_0
       -\Phi_{\del,\eps}
          (t-\tau_{\del,\eps}^{\kappa})
        \mathcal R_{\del,\eps}
          \mathbb S_\del(\tau_{\del,\eps}^{\kappa})z_0
      }_{\Xcal\times\Hcal}\leq
      C_T\left(
        R_u\del^{\kappa}
        +R_v\eps^{\kappa}
        +\del+\eps^2
      \right),
    \label{eq:two-parameter-shadowing}
\end{aligned}
\end{equation}
where $C_T$ is independent of sufficiently small $\del,\eps$ in the
chosen parameter family.
\end{theorem}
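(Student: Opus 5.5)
The plan is to follow the single-parameter argument of Theorem~\ref{thm:post-layer}, with the stable directions now being the pair $(u,q)$. Write $\tau=\tau_{\del,\eps}^\kappa$, $z_\tau=\mathbb S_\del(\tau)z_0=(u_\tau,v_\tau)$, and $z(t)=\mathbb S_\del(t)z_0=(u(t),p(t)+q(t))$. For $t\ge\tau$ I split
\[
    \mathbb S_\del(t)z_0-\Phi_{\del,\eps}(t-\tau)\mathcal R_{\del,\eps}z_\tau
    =\bigl[\mathbb S_\del(t)z_0-\mathcal R_{\del,\eps}\mathbb S_\del(t)z_0\bigr]
    +\bigl[\mathcal R_{\del,\eps}\mathbb S_\del(t)z_0-\Phi_{\del,\eps}(t-\tau)\mathcal R_{\del,\eps}z_\tau\bigr],
\]
that is, a reconstruction error plus a semi-conjugacy defect after restart at time $\tau$.

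First I need post-layer bounds on the stable components of the original flow. Variation of constants with \eqref{eq:B-fast-semigroup} gives $\norm{u(t)}_\Xcal\le \e^{-\omega_f t/\del}R_u+M_f\del/\omega_f$. For $t\ge\tau$ we have $\e^{-\omega_f\tau/\del}\le\del^\kappa$, so $\norm{u(t)}_\Xcal\le R_u\del^\kappa+M_f\del/\omega_f$. The same computation as in \eqref{eq:post-layer-tail} gives $\norm{q(t)}_\Hcal\le R_v\eps^\kappa+M_g/\lambda_\eps^+$. Combining these with the graph sizes \eqref{eq:two-parameter-graph-size}, the reconstruction error
\[
    \norm{u-h^u_{\del,\eps}(p)}+\norm{q-h^q_{\del,\eps}(p)}
\]
is $O(R_u\del^\kappa+R_v\eps^\kappa+\del+\eps^2)$, uniformly for $t\ge\tau$.

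Next I treat the defect. Since $J_{\del,\eps}$ conjugates $\Phi_{\del,\eps}$ to the reduced flow \eqref{eq:two-parameter-reduced}, the defect equals $J_{\del,\eps}p(t)-J_{\del,\eps}p_\eps(t)$, where $p_\eps$ solves \eqref{eq:two-parameter-reduced} with $p_\eps(\tau)=p(\tau)$. By \eqref{eq:two-parameter-graph-lipschitz} together with $K_{\del,\eps}<1$, the map $J_{\del,\eps}$ is Lipschitz with a constant bounded in the parameter family; in the regime of Remark~\ref{cor:joint-scaling} this constant is $1+O(\eps)$. It therefore suffices to bound $w=p-p_\eps$. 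The difference satisfies
\[
    \dot w+A_sw=P_\eps\bigl[g(u,p+q)-g(h^u_{\del,\eps}(p_\eps),p_\eps+h^q_{\del,\eps}(p_\eps))\bigr],
\]
so $A_s\ge0$ and the Lipschitz bound on $g$ give
\[
    \tfrac{d}{dt}\norm{w}\le L_*\norm{w}+L_*\bigl(\norm{u}+\norm{q}+\norm{h^u_{\del,\eps}}_\infty+\norm{h^q_{\del,\eps}}_\infty\bigr).
\]
The bracket is bounded by the post-layer estimates above. Gronwall on $[\tau,T]$ with $w(\tau)=0$ yields $\norm{w(t)}\le(\e^{L_*T}-1)\,C\,(R_u\del^\kappa+R_v\eps^\kappa+\del+\eps^2)$. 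Adding the two contributions gives \eqref{eq:two-parameter-shadowing}.

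The main obstacle is not the Gronwall step but keeping constants uniform in $(\del,\eps)$. In particular, the Lipschitz constant of $J_{\del,\eps}$ must stay bounded, which requires $K_{\del,\eps}$ to stay uniformly below $1$ along the chosen parameter family. I will therefore restrict to the family of Remark~\ref{cor:joint-scaling}, where $G_{\del,\eps}\ge c_\vartheta\eps^{-1}$. A second check is the factor $M_g/\lambda_\eps^+\le M_g\eps^2/\mu$, which is what absorbs the $\eps^2$ term in the bound. Proposition~\ref{prop:coupled-full-flow-comparison} is not needed, because the argument compares the original flow directly with the reduced flow.
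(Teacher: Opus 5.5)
Your argument is correct, but it follows a different route from the paper's proof.

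\textbf{Your route.} You transplant the single-parameter proof of Theorem~\ref{thm:post-layer}. You split the error into a reconstruction error plus a restarted semi-conjugacy defect. Through the conjugacy $J_{\del,\eps}$, the defect reduces to a Gronwall estimate for the retained coordinate $w=p-p_\eps$.

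\textbf{The paper's route.} The paper instead inserts the \emph{renormalized full flow} started at the same point $z_\tau$:
\[
\mathbb S_\del(s)z_\tau-\Phi_{\del,\eps}(s)\mathcal R_{\del,\eps}z_\tau=\bigl[\mathbb S_\del(s)z_\tau-\mathbb S_{\del,\eps}(s)z_\tau\bigr]+\bigl[\mathbb S_{\del,\eps}(s)z_\tau-\mathbb S_{\del,\eps}(s)\mathcal R_{\del,\eps}z_\tau\bigr].
\]
This uses invariance of $\Mcal_{\del,\eps}$ to identify $\mathbb S_{\del,\eps}(s)\mathcal R_{\del,\eps}z_\tau$ with $\Phi_{\del,\eps}(s)\mathcal R_{\del,\eps}z_\tau$. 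The first bracket is bounded by Proposition~\ref{prop:coupled-full-flow-comparison}. Because the extra forcing $\eps^{-1}Q_\eps\widetilde q$ is damped at rate $\lambda_\eps^+$, this gives $O(R_v\eps^{\kappa+1}+\eps^3)$. The second bracket is bounded by the Lipschitz constant $\e^{2L_*s}$ of the renormalized semiflow times the post-layer distance $\norm{z_\tau-\mathcal R_{\del,\eps}z_\tau}$, which you also compute.

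\textbf{What each route buys.} The paper's route never uses the slope of the graph, only the sup bounds \eqref{eq:two-parameter-graph-size}. Its constant $C_T$ is therefore uniform on the whole hypothesis region $K_{\del,\eps}<1$. Your route needs
\[
\Lip(J_{\del,\eps})\leq 1+\frac{4L_*}{(1-K_{\del,\eps})G_{\del,\eps}}
\]
to stay bounded, which fails as $K_{\del,\eps}\uparrow1$. That is why you must restrict to the family of Remark~\ref{cor:joint-scaling}, which is legitimate given the statement's phrase ``chosen parameter family''.

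In return, your route gives a stronger intermediate result: the coupled analogue of the restarted defect estimate \eqref{eq:post-layer-first-part} on retained coordinates, with Gronwall factor $\e^{L_*T}$ instead of $\e^{2L_*T}$. It also dispenses with the full-flow comparison. The paper's decomposition, for its part, makes visible that the renormalization itself contributes only a higher-order error, with the dominant term being the post-layer distance to $\Mcal_{\del,\eps}$.
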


\begin{proof}
Set $\tau:=\tau_{\del,\eps}^{\kappa}$ and $z_\tau:=\mathbb S_\del(\tau)z_0$.
Variation of constants gives, uniformly for $z_0\in\Bcal$,
\[
    \norm{u_\tau}\leq R_u\del^\kappa+\frac{M_f\del}{\omega_f},\qquad
    \norm{Q_\eps v_\tau}\leq R_v\eps^\kappa+\frac{M_g}{\lambda_\eps^+}.
\]
Together with the graph bounds, this yields
\[
    \norm{z_\tau-\mathcal R_{\del,\eps}z_\tau}
    \leq C(R_u\del^\kappa+R_v\eps^\kappa+\del+\eps^2).
\]
The renormalized flow has Lipschitz constant at most $\e^{2L_*s}$ at time $s$.
By graph invariance and Proposition~\ref{prop:coupled-full-flow-comparison}, for $0\leq s\leq T-\tau$,
\[
\begin{aligned}
    \norm{\mathbb S_\del(s)z_\tau-
          \Phi_{\del,\eps}(s)\mathcal R_{\del,\eps}z_\tau}
    &\leq\e^{2L_*T}\left[
       \norm{z_\tau-\mathcal R_{\del,\eps}z_\tau}
       +\frac{\norm{Q_\eps v_\tau}+M_g/\alpha_\eps}{\eps\lambda_\eps^+}
    \right].
\end{aligned}
\]
The last fraction is $O(R_v\eps^{\kappa+1}+\eps^3)$.
Use $0<\eps\leq1$, put $s=t-\tau$, and apply the semiflow property to obtain the assertion.
\end{proof}

\begin{remark}
\label{rem:two-parameter-post-layer-structure}
The main difference to the post-initial layer result is that there are now two stable directions.  
After the time $\tau_{\del,\eps}^\kappa$, the physical fast variable has size $O(\del^\kappa+\del)$ and the unresolved slow tail has size $O(\eps^\kappa+\eps^2)$.  
Consequently the evolved state is already within
\[
    O(\del^\kappa+\eps^\kappa+\del+\eps^2)
\]
of the renormalized slow manifold.

For $s=t-\tau$, the comparison used in the proof is simply
\[
\begin{aligned}
\mathbb S_\delta(s)z_\tau
-\Phi_{\delta,\eps}(s)\mathcal R_{\delta,\eps}z_\tau
={}&
\bigl[
\mathbb S_\delta(s)z_\tau
-\mathbb S_{\delta,\eps}(s)z_\tau
\bigr] +
\bigl[
\mathbb S_{\delta,\eps}(s)z_\tau
-\mathbb S_{\delta,\eps}(s)\mathcal R_{\delta,\eps}z_\tau
\bigr],
\end{aligned}
\]
where graph invariance gives
\[
\mathbb S_{\delta,\eps}(s)\mathcal R_{\delta,\eps}z_\tau
=
\Phi_{\delta,\eps}(s)\mathcal R_{\delta,\eps}z_\tau.
\]
The first term measures the effect of the spectral renormalization
and is $O(R_v\eps^{\kappa+1}+\eps^3)$.
The second is controlled by finite-time Lipschitz dependence of the
renormalized semiflow and the post-layer distance
\[
\|z_\tau-\mathcal R_{\delta,\eps}z_\tau\|
=
O(R_u\delta^\kappa+R_v\eps^\kappa+\delta+\eps^2).
\]
\end{remark}


\section{Discussion}
\label{sec:discussion}

Here, we now discuss several aspects of the spectral renormalization method introduced and analyzed in the previous sections.
We remark, in particular, on the freedom in choosing the parameters in the spectral modification, its relation to other reduction methods, and the limitations and possible extensions of the dynamical conclusions.

\paragraph{Position and size of the artificial spectral gap.}
We note that the cutoff and damping strength can be chosen independently:
\[
    A_{\Lambda,\kappa}=A+\kappa Q_\Lambda,
    \qquad Q_\Lambda=\mathbf1_{(\Lambda,\infty)}(A),
    \qquad \Lambda,\kappa>0.
\]
The separation between retained and discarded spectra is at least $\kappa$, so the graph construction in Theorem~\ref{thm:inertial-manifold} applies when $\kappa\geq8L$.
For approximation, the spectral-tail estimate gives
\[
    \norm{\kappa Q_\Lambda}_{\mathcal L(\Hcal^r,\Hcal^s)}
    \leq\kappa(1+\Lambda)^{-(r-s)/2},\qquad r>s\geq0.
\]
Thus a fixed damping strength can be placed progressively farther into the spectrum while its norm between these spaces tends to zero.
More generally, for $\Lambda_\eps\sim\eps^{-b}$ and $\kappa_\eps\sim\eps^{-a}$, with $b>0$ and $a\geq0$, the condition $b(r-s)/2>a$ guarantees this convergence.
This separates the gap needed for the graph construction from the regularity and scaling needed for approximation.
In the fast--slow setting, the physical separation condition of Section~\ref{sec:fast-slow} must also be satisfied.

\paragraph{Other regularization and transformation mechanisms.}
One example is hyperviscosity. 
Here one replaces the spectral renormalization of $A$, for example, by $A+\nu A^\theta$, with $\nu>0$ and $\theta>1$.
It strengthens high-frequency decay but changes the operator domain; corresponding approximation estimates therefore require different regularity bounds.
Inertial-manifold results for hyperviscous models can also use spatial averaging; see the results in \cite{KostiankoEtAl2022}.

Another example is introduced by Chepyzhov, Kostianko and Zelik \cite{ChepyzhovKostiankoZelik2019}, where they study the hyperbolic relaxation 
\begin{equation*}
    \eta\partial_t^2u+\partial_tu+Au=F(u),\qquad\eta>0,
    \label{eq:hyperbolic-relaxation-discussion}
\end{equation*}
their parabolic limit and the existence of inertial manifolds.
In their construction they retain the classical condition $\lambda_{N+1}-\lambda_N>2L$ and additionally require $\eta(3\lambda_{N+1}+\lambda_N)\leq1$.
Thus small hyperbolic relaxation does not itself remove the spatial-gap requirement.

Nonlocal changes of variables provide another route for the classes of one-dimensional reaction-diffusion-advection equations treated in \cite{KostiankoZelik2017,KostiankoZelik2018}.
On the region where the transformation is a diffeomorphism, it preserves trajectories in changed coordinates.
The periodic vector-valued case also contains counterexamples to inertial-manifold existence, so this mechanism depends on the equation and boundary conditions.

\paragraph{Relation with shadowing and structural stability.}
In a recent article Arrieta, Carvalho and Takaessu \cite{ArrietaCarvalhoTakaessu2026} prove Lipschitz shadowing for the time-one map on the global attractor and H\"older shadowing on positively invariant bounded neighborhoods, under smoothness and Morse-Smale hypotheses with only equilibria in the non-wandering set.
Their results do not assume an inertial manifold.
However, combining this theory with our one-step comparison estimates on complete bounded trajectories is a possible route to long-time orbit comparison beyond the contraction settings considered in this work.
The shadowing orbit may require a different initial point, and H\"older shadowing may yield a weaker power of $\eps$.
Such an orbit comparison would therefore not directly establish the uniform defect bound for the prescribed retraction $R_\eps$ required in Theorem~\ref{thm:hausdorff-from-uniform-semiconjugacy}.

\paragraph{Computational inertial manifolds.}
The present construction also provides a possible interpretation of the long-standing computational success of approximate inertial manifolds, nonlinear Galerkin schemes, and post-processed Galerkin methods \cite{FoiasJollyKevrekidisSellTiti1988,JollyKevrekidisTiti1990, MarionTemam1989,DebusscheMarion1992,GarciaArchillaNovoTiti1998, DevulderMarion1992,KoronakiEtAl2024}. 
These methods exploit the observation that, after the rapidly decaying modes have relaxed, the unresolved high-frequency component is approximately slaved to a finite set of retained coordinates. 
Such procedures can remain useful even in equations for which the existence of an exact inertial manifold has not been established. 
The renormalization viewpoint suggests one explanation for this phenomenon: although the original equation may fail the spectral-gap condition, it may lie close, on the dynamically relevant region and time scale, to a renormalized equation possessing an exact inertial manifold. 
The graph of this nearby manifold then provides a high-mode slaving relation for the chosen renormalization and preparation, while the semi-conjugacy and post-initial-layer estimates quantify how accurately its reduced dynamics reproduce the original equation. 
This does not by itself prove convergence of a particular computational inertial-manifold algorithm, but it provides a nearby equation interpretation which may help explain why low-dimensional nonlinear reductions can remain effective even when an exact inertial manifold for the unmodified equation is unavailable.

\paragraph{Approximate invariant manifolds for the original equation.}
The renormalized inertial manifold $\Mcal_\eps$ can also be interpreted as an approximate invariant manifold for the original equation. 
This connects our present construction with the classical theory of approximate inertial manifolds, where finite-dimensional graphs are used to approximate the long-time dynamics of dissipative PDEs without requiring exact invariance; see, for example, 
\cite{FoiasManleyTemam1988,Marion1989,Temam1989,Titi1990, DebusscheMarion1992,DebusscheTemam1994}.
Here the interpretation follows directly from the semiflow estimates established above. 
Indeed, trajectories of the original system starting on $\Mcal_\eps$ satisfy
\[
    \sup_{u_0\in\Mcal_\eps}\sup_{t\geq0}
    \dist\bigl(S(t)u_0,\Mcal_\eps\bigr)
    \leq C\eps^2,
\]
while, on every fixed time interval, the corresponding dynamics are approximated by the exact flow on the renormalized inertial manifold with the same order. 
More generally, Theorem \ref{thm:post-layer} shows that trajectories starting in bounded sets are $O(\eps^2)$-shadowed by trajectories on $\Mcal_\eps$ after an initial layer of length
$O(\eps^2|\log\eps|)$. 
Thus $\Mcal_\eps$ provides an approximate high-mode slaving relation for the original equation even when the latter does not possess an exact inertial manifold.

This viewpoint is related to, but weaker than, the persistence theory of Bates, Lu and Zeng \cite{BatesLuZeng1998,BatesLuZeng2008}, which requires differentiable approximate invariance together with suitable normal hyperbolicity in order to obtain a genuine nearby invariant manifold.
Such hypotheses need not survive removal of the artificial spectral gap.
The advantage of the present semi-conjugacy approach is precisely that it still provides a quantitative finite-dimensional approximation of the dynamics in this regime.

\section*{Acknowledgments}

J.E.S. acknowledges funding from the Deutsche Forschungsgemeinschaft (DFG, German Research Foundation) – Project-ID 571660837.

\paragraph{Use of generative AI.} 
Generative AI tools (ChatGPT 5.6 Sol and 6 Astra) were used during the preparation of this manuscript, specifically for brainstorming ideas, additional literature review, and draft editing. 
All mathematical results and final formulations are the authors’ own work.

\small
\bibliographystyle{abbrvurl}
\bibliography{literature}

\end{document}